\newtheorem{lemma}{Lemma}[section]
\newtheorem{theorem}[lemma]{Theorem}
\newtheorem*{theorem*}{Theorem}
\newtheorem{corollary}[lemma]{Corollary}
\newtheorem{proposition}[lemma]{Proposition}
\newtheorem{remark}[lemma]{Remark}
\newtheorem*{proposition*}{Proposition}
\newtheorem{conjecture}[lemma]{Conjecture}
\theoremstyle{remark}
\newcommand{\E}{{\mathbb E}}
\newcommand{\F}{{\mathbb F}}
\newcommand{\N}{{\mathbb N}}
\newcommand{\Q}{{\mathbb Q}}
\newcommand{\R}{{\mathbb R}}
\newcommand{\Z}{{\mathbb Z}}
\newcommand{\norm}[1]{\left\Vert #1\right\Vert}
\newcommand{\nnorm}[1]{\lvert\!|\!| #1|\!|\!\rvert}
\theoremstyle{definition}
\newtheorem*{definition*}{Definition}
\newtheorem*{conjecture*}{Conjecture}
\newtheorem*{remark*}{Remark}
\newtheorem*{remarks*}{Remarks}
\newtheorem*{claim*}{Claim}
\newtheorem*{convention}{Convention}
\newtheorem{example}{Example}
\def \c {\bold{c}}
\def \E {\overline{\mathbb{E}}}
\def \g {\bold{g}}
\def \h {\bold{h}}
\def \p {\bold{p}}
\def \q {\bold{q}}
\def \u {\bold{u}}
\def \U {\bold{U}}
\def \v {\bold{v}}
\def \X {\bold{X}}
\def \I {\mathcal{I}}
\def \vl {\varlimsup_{N\to \infty}}
\def \F {\sup_{\substack{  (I_{N})_{N\in\mathbb{N}} \\ \text{ F\o lner seq.} }}}
\def \ei {\mathbb{E}_{n\in I_{N}}}
\title[Seminorms and joint ergodicity]{Seminorms for multiple averages along polynomials and applications to joint ergodicity} 
\author{Sebasti{\'a}n Donoso, Andreas Koutsogiannis and Wenbo Sun}
\address[Sebasti{\'a}n Donoso]{Instituto de Ciencias de la Ingenier\'ia, Universidad de O'Higgins, Av. Libertador Bernardo O’Higgins 611, Rancagua, 2841959, Chile}
\email{sebastian.donoso@uoh.cl}
\curraddr{\textsc{Departamento de Ingenier\'{\i}a Matem\'atica and Centro de Modelamiento Matem\'atico, Universidad de Chile \& IRL 2807 - CNRS, Beauchef 851, Santiago, Chile.}} \email{sdonoso@dim.uchile.cl}
\address[Andreas Koutsogiannis]{Department of mathematics, The Ohio State University, 231 West 18th Avenue, Columbus, OH 43210-1174, USA}
 \email{koutsogiannis.1@osu.edu}
\curraddr{\textsc{Department of Mathematics, Aristotle University of Thessaloniki, Thessaloniki, 54124, Greece}}
\email{akoutsogiannis@math.auth.gr}
\address[Wenbo Sun]{Department of mathematics, The Ohio State University,  231 West 18th Avenue, Columbus, OH 43210-1174, USA} \email{sun.1991@osu.edu}
\curraddr{\textsc{Department of Mathematics, Virginia Tech, 225 Stanger Street, Blacksburg, VA, 24061, USA}}
\email{swenbo@vt.edu}
\thanks{The first author is supported by Fondecyt Iniciaci\'on en Investigaci\'on Grant 11160061 and grant Conicyt-PIA Program AFB 170001.}
\subjclass[2010]{Primary: 37A05; Secondary: 37A30, 28A99}
\keywords{Multiple averages, polynomials, characteristic factors, concatenation theorem}
\begin{document}

\begin{abstract}

Exploiting the recent work of Tao and Ziegler on a concatenation theorem on factors, 
we find explicit characteristic factors for multiple averages along polynomials on systems with commuting transformations, and use them to study criteria of joint ergodicity for sequences of the form $(T^{p_{1,j}(n)}_{1}\cdot\ldots\cdot T^{p_{d,j}(n)}_{d})_{n\in\mathbb{Z}},$ $1\leq j\leq k$, where  $T_{1},\dots,T_{d}$ are commuting measure preserving transformations on a probability measure space and $p_{i,j}$ are integer polynomials. To be more precise,
 we provide a sufficient condition for such sequences to be jointly ergodic, giving also a characterization for sequences of the form $(T^{p(n)}_{i})_{n\in\mathbb{Z}}, 1\leq i\leq d$ to be jointly ergodic, answering a question due to Bergelson.
\end{abstract}

\maketitle

\section{introduction}

\subsection{Characteristic factors for multiple averages}

Let $\X=(X,\mathcal{B},\mu,T)$ be a measure preserving $\mathbb{Z}$-system.\footnote{ By this we mean that $(X,\mathcal{B},\mu)$ is a probability space and $T$ is an invertible measure preserving transformation, \emph{i.e.}, $\mu(T^{-1}A)=\mu(A)$ for all $A\in\mathcal{B}$. We also denote such a system as $(X,\mathcal{B},\mu,(S_g)_{g\in \mathbb{Z}})$ later in this paper, where $S_n=T^n,$ \emph{i.e.}, the composition of $T$ with itself $n$ times if $n\geq 0$ (and the composition of $T^{-1}$ $-n$ times if $n<0$).} When $T$ is ergodic (\emph{i.e.}, the measure of any $T$-invariant set is $0$ or $1$), the von Neumann ergodic theorem (see for example \cite[Theorem~2.21]{ET}) asserts that for all $f\in L^{2}(\mu)$, the $L^{2}(\mu)$ limit of the ``time average''  $\frac{1}{N}\sum_{n=0}^{N-1}T^{n}f$ equals to the ``natural'' one, namely the ``space limit'' $\int_{X}f\,d\mu$. 

 In the past decades,  the $L^2$-limit behavior of the ``multiple averages'' became a central topic in ergodic theory. Several  authors have studied averages for a single transformation $T$, as 
\begin{equation}\label{i1}
\frac{1}{N}\sum_{n=0}^{N-1}T^{p_{1}(n)}f_{1}\cdot\ldots\cdot T^{p_{k}(n)}f_{k},
\end{equation}  
 averages for several (usually commuting) $T_i$'s, as
\begin{equation}\label{i1*}
\frac{1}{N}\sum_{n=0}^{N-1}T_1^{p_{1}(n)}f_{1}\cdot\ldots\cdot T_k^{p_{k}(n)}f_{k}
\end{equation}  	
and even more general averages as

\begin{equation}\label{i1**}\frac{1}{N}\sum_{n=0}^{N-1}\prod_{i=1}^m T_i^{p_{i,1}(n)}f_1\cdot\ldots\cdot \prod_{i=1}^m T_i^{p_{i,k}(n)}f_k \end{equation}
for some $m, k\in\mathbb{N}^\ast$, suitable integer valued sequences $(p_i(n))_{n\in \mathbb{N}},$ $(p_{j,i}(n))_{n\in\mathbb{N}}$ and $f_i\in L^{\infty}(\mu),$ $1\leq i\leq k,$ $1\leq j\leq m$.\footnote{ Even though the expressions \eqref{i1}, \eqref{i1*} and \eqref{i1**} are stated for general, suitable, integer valued sequences, we use this notation as we will only deal with (integer) polynomial ones in what follows.} Fruitful results have been obtained, which include, but are not limited to \cite{A,BD,B,BLS,CFH,FK,Ho,HK,Jo,KK,K1,L9,T,W,Z2}.
In particular, it was proved by  Walsh \cite{W} (following the ideas of Tao \cite{T}) that the multiple (uniform) averages, as in \eqref{i1**}, converge in the $L^2$ sense for any integer valued polynomials $p_i$ when $T_1,\ldots,T_m$ span a nilpotent group. 
However, the result in \cite{W} does not give any description or information about the limit.
In general, very little is known about the limit of multiple averages.

 The existing results employ the idea of \emph{characteristic factors}, which intends to reduce the average under study to a more tractable one. For a single transformation $T$ and for linear $p_i$'s, the main content of \cite{HK} is the introduction of some seminorms that control the behavior of the average \eqref{i1} and are characterized by  \emph{nilsystems}. These seminorms were also used by Leibman (in \cite{L9}) to bound the limit of \eqref{i1} for polynomial $p_i$'s (always in the context of a single transformation).  For several commuting transformations, Host (in \cite{Ho}) introduced similar seminorms to bound the limit of \eqref{i1*} for linear $p_i$'s but in that case there was still no clear connection to nilsystems (see also \cite{Sun,TZ} for slight generalizations of these seminorms).  When considering non linear polynomials $p_i$'s, even less is known and even simple cases can be very intricate. For instance,  Austin in \cite{A1,A2} found precise characteristic factors for some specific cases of quadratic polynomials for $k=2$ (and linear polynomials for $k=3$).

In this paper, under a further development of  a recent result by Tao and Ziegler (\cite{TZ}) on concatenation (intersection) of factors, we provide an upper bound for the limit of \eqref{i1**} for  any $m,k \in \N^{\ast} $ and polynomials $p_{i,j}$ taking integer values at integers by using some seminorms on the system (generically called \emph{Host-Kra seminorms}), which, to the best of our knowledge, has never been studied before in this generality. We state here a simplified more aesthetic one-parameter version of our main result, and refer the readers to Theorem~\ref{T:2} below for the result in its full generality:

\begin{theorem}[Bounding multiple averages along polynomials by seminorms]\label{T:3}
	Let $d,k,K\in\mathbb{N}^\ast$ and $p_{1},\dots,p_{k}\colon\mathbb{Z}\to\mathbb{Z}^{d}$ be  a family of polynomials of degrees at most $K$ such that $p_{i}, p_{i}-p_{j}$ are not constant for all $1\leq i,j\leq k$, $i\neq j$, where $p_{i}(n)=\sum_{0\leq v\leq K}b_{i,v}n^{v}$ for some $b_{i,v}\in \mathbb{Q}^{d}$.
	Denote the set of the coefficients and pairwise differences of the coefficients (excluding ${\bf 0}$) of the polynomials with 
	\[ R=\bigcup_{0< v\leq K}\{b_{i,v}, b_{i,v}-b_{i',v}\colon 1\leq i, i'\leq k\}\backslash\{{\bf 0}\}.\]
	Let $(X,\mathcal{B},\mu, (T_{g})_{g\in\mathbb{Z}^{d}})$ be a $\mathbb{Z}^{d}$-system (see Section \ref{def} for the definition). If the Host-Kra seminorm $\|f_i \|_{{\{G(r)^{\times \infty}\}_{r\in R}}}$ (see Section \ref{s:2} for definitions) of $f_{i}$ equals to 0 for some $1\leq i\leq k$, then  
	\[ \lim_{N-M\to \infty} \frac{1}{N-M}\sum_{n=M}^{N-1}T_{p_{1}(n)}f_1\cdot\ldots\cdot T_{p_{k}(n)}f_k = 0.\]	
\end{theorem}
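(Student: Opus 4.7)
The plan is to combine a polynomial exhaustion technique (PET) van der Corput induction with the Tao--Ziegler concatenation theorem on factors. The goal is to first show that the average in question is controlled by a \emph{collection} of Host--Kra seminorms, each associated with a single group direction coming from the polynomial coefficients, and then to concatenate these into the single seminorm $\|f_i\|_{\{G(r)^{\times \infty}\}_{r\in R}}$.

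First I would set up the PET induction on the family $(p_1,\dots,p_k)$, ordered by the Bergelson--Leibman vector-valued PET complexity. By assumption all $p_i$ and all differences $p_i-p_j$ are nonconstant, so the standard van der Corput inequality applied to the average produces, after re-indexing, a new family of polynomials of the form $(p_i(n+h)-p_\ell(n))_{i\neq \ell}\cup (p_j(n+h)-p_\ell(n))_{j\neq \ell}$ (fixing some pivot index $\ell$), averaged over $h$. Each van der Corput step strictly reduces the PET weight by eliminating the leading behavior of one of the polynomials while introducing $h$-dependent leading coefficients proportional to elements $b_{i,v}$ and differences $b_{i,v}-b_{i',v}$ from the set $R$. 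The induction terminates when the polynomial family collapses to a constant system that can be handled directly; at each terminal leaf the leading coefficient in $n$ that remains attached to $f_i$ is a nonzero integer multiple of some $r\in R$.

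The second step, which is the main technical pivot, is to read off from the terminal step of the PET scheme an \textbf{initial seminorm bound} of the form
\[
\Bigl\|\lim_{N-M\to\infty}\frac{1}{N-M}\sum_{n=M}^{N-1}T_{p_1(n)}f_1\cdots T_{p_k(n)}f_k\Bigr\|_{L^2(\mu)}^{2^s}\;\lesssim\;\E_{\h}\,\nnorm{\Delta_{\h}^{(r)} f_i}_{\text{some factor}}
\]
where the Gowers-type differencing is along the subgroup generated by $r$ and the factor controlling the terminal average is the Host--Kra factor of a single $\mathbb{Z}$-direction $G(r)$. Performing the PET induction starting from different pivot leaves yields, for each $r\in R$ that occurs as a terminal leading coefficient attached to $f_i$, a bound by the seminorm $\|f_i\|_{G(r)^{\times s(r)}}$ for some finite $s(r)$ depending on the complexity. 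The combinatorial bookkeeping here—tracking exactly which elements of $R$ arise as terminal coefficients when $f_i$ is the distinguished function—is the main obstacle, since one needs every element of $R$ to appear and must control how the height of the seminorm grows under the induction.

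Finally, with these partial seminorm bounds in hand, I would invoke the Tao--Ziegler concatenation theorem on factors (as stated and strengthened in the body of the paper) to upgrade the collection of finite-height seminorms $\{\|f_i\|_{G(r)^{\times s(r)}}\}_{r\in R}$ to the combined infinite-height seminorm $\|f_i\|_{\{G(r)^{\times\infty}\}_{r\in R}}$: the concatenation principle asserts that vanishing of each individual directional seminorm along sufficiently high Host--Kra factors forces $f_i$ to lie in the intersection of all the factors, and this intersection is exactly the characteristic factor for the combined seminorm defined in Section~\ref{s:2}. Once $\|f_i\|_{\{G(r)^{\times\infty}\}_{r\in R}}=0$, the chain of inequalities above forces the $L^2$-limit to vanish, giving the theorem. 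The two chief difficulties I expect are (a) verifying that the PET reduction, when unwound, produces precisely the directions in $R$ (and no spurious extra directions) attached to $f_i$, and (b) aligning the heights and the underlying subgroups so that Tao--Ziegler concatenation applies without loss.
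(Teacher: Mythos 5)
Your high-level framework (PET/van der Corput induction followed by Tao--Ziegler concatenation) matches the paper's, but the crucial second step misdescribes what the PET reduction produces, and this misdescription hides the very reason concatenation is needed.

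You claim that, for each $r\in R$, a run of the PET scheme yields a bound by a seminorm $\|f_i\|_{G(r)^{\times s(r)}}$ along a single \emph{fixed} direction $G(r)$. If that were the case, the theorem would follow immediately and the concatenation theorem would play no role: by Lemma~\ref{replacement0}(vi) the factor $Z_{G(r)^{\times s}}$ is contained in $Z_{\{G(r')^{\times\infty}\}_{r'\in R}}$, so the hypothesis $\|f_i\|_{\{G(r)^{\times\infty}\}_{r\in R}}=0$ already forces $\|f_i\|_{G(r)^{\times s(r)}}=0$ for every $r$ and every height. However, the PET reduction does not give a fixed-direction bound. What it actually produces (Proposition~\ref{pet}, equation~(\ref{30})) is an \emph{averaged} bound of the form
\[
\F\vl\Bigl\Vert\mathbb{E}_{n\in I_{N}}\prod_{i=1}^k T_{p_{i}(n)}f_i\Bigr\Vert_{L^{2}(\mu)}
\leq C \cdot \overline{\mathbb{E}}^{\square}_{h_{1},\dots,h_{s}\in\mathbb{Z}^{L}}\Vert f_{i}\Vert_{(G(\c_{i,m}(h_{1},\dots,h_{s})))_{1\leq m\leq t_{i}}},
\]
where the groups $G(\c_{i,m}(h_{1},\dots,h_{s}))$ are typically rank-one subgroups of $\mathbb{Z}^d$ generated by vectors that are nontrivial polynomial functions of the van der Corput parameters. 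No single value of $h$ gives a fixed $G(r)$; the set $R$ only controls the \emph{coefficients} of the $\c_{i,m}$ regarded as polynomials in $h$ (part (i) of Proposition~\ref{pet}, via the bookkeeping of Proposition~\ref{1234}), not the terminal directions themselves.

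That averaging over $h$ is precisely where the concatenation theorem earns its keep, and its role is the opposite of what you describe. It is not used to glue together vanishing seminorms indexed by the different $r\in R$; it is used (Proposition~\ref{pet3}) to show that an intersection, over finitely many carefully chosen parameter tuples, of the factors $Z_{(G(\c_{1,m}(\cdot)))_{m}}$ is contained in $Z_{H_{1,1}^{\times\infty},\dots,H_{1,t_1}^{\times\infty}}$. The parameter tuples are selected by an admissibility/pigeonhole argument ensuring that the spans of the corresponding rank-one groups grow to the full groups $H_{1,m}$, after which Corollary~\ref{ct} upgrades the intersection of the small, parameter-dependent factors to the large, parameter-free one. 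Without this step the averaged seminorm cannot be reduced to a single seminorm, and the fixed-direction bound in your step 2 is simply not obtainable.
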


\begin{remark*}
Unlike the conventional ``finite-step'' Host-Kra seminorms, the seminorms $\|\cdot \|_{{\{G(r)^{\times \infty}\}_{r\in R}}}$ that we use are ``infinite-step'' ones. It is an interesting question to ask whether one can replace the ``infinite-step'' seminorms in the main theorems of this paper by ``finite-step'' ones.
\end{remark*}

\subsection{The joint ergodicity property}
An interesting application of Theorem \ref{T:3} and its stronger version, Theorem \ref{T:2}, is that they can be used to study problems of joint ergodicity, allowing us to answer a question due to Bergelson.
Back to the description of the limit of (\ref{i1**}), there are interesting cases where the limit has a ``simple'' description. In \cite{B},  Bergelson showed that if  $(X,\mathcal{B},\mu,T)$ is a weakly mixing system (meaning that $T\times T$ is ergodic for $\mu\times \mu$)\footnote{ In this case we also say that $T$ is a weakly mixing transformation.} and $p_{1},\dots,p_{k}$ are polynomials such that $p_{i}, p_{i}-p_{j}$ are non-constant for all $1\leq i,j\leq k, i\neq j$, then the $L^{2}(\mu)$ limit of (\ref{i1}) is the ``expected'' one, namely the ``multiple space limit'' $\prod_{i=1}^{k}\int_{X}f_{i}\,d\mu$.\footnote{ This result was previously obtained by  Furstenberg (in \cite{Fu0}) in the special case where $p_i(n)=in$, $i=1,\ldots,k$.}   
One can think of this result as a strong independence
property of the sequences $(T^{p_{i}(n)})_{n\in\mathbb{Z}}, 1\leq i\leq k$ in the weakly mixing case. This naturally leads to the following definition of joint ergodicity, in which we demand the average to converge to the expected limit. 

\begin{definition*}
	Let $d,k,L\in\mathbb{N}^\ast$, $p_{1},\dots,p_{k}\colon\mathbb{Z}^{L}\to\mathbb{Z}^{d}$ be functions, and $(X,\mathcal{B},\mu, (T_{g})_{g\in \mathbb{Z}^{d}})$ be a $\mathbb{Z}^{d}$-system.  We say that the tuple $(T_{p_{1}(n)},\dots,T_{p_{k}(n)})_{n\in\mathbb{Z}^{L}}$ is \emph{jointly ergodic} for $\mu$ if for every $f_{1},\dots,f_{k}\in L^{\infty}(\mu)$ and every F{\o}lner sequence $(I_{N})_{N\in\mathbb{N}}$ of $\mathbb{Z}^{L}$,\footnote{ Let $H$ be a subgroup of $\mathbb{Z}^L.$ A sequence of finite subsets $(I_{N})_{N\in\mathbb{N}}$ of  $H$ with the property that for all $g\in H,$ $\lim_{N\to\infty}\vert I_{N}\vert^{-1}\cdot\vert (g+I_{N})\triangle I_{N}\vert=0,$  is called \emph{F\o lner sequence} in $H$.} we have that
	\begin{equation}\label{444}
	\lim_{N\to\infty}\frac{1}{\vert I_{N}\vert}\sum_{n\in I_{N}}T_{p_{1}(n)}f_{1}\cdot\ldots\cdot T_{p_{k}(n)}f_{k}=\int_{X}f_{1}\,d\mu\cdot\ldots\cdot \int_{X}f_{k}\,d\mu,
	\end{equation}	
	where the limit is taken in $L^{2}(\mu)$. When $k=1$, we say that $(T_{p_{1}(n)})_{n\in\mathbb{Z}^{L}}$ is \emph{ergodic} for $\mu$ instead.\footnote{ The main reason we change from single-variable $p_i$'s to multi-variable ones and give the definition in this generality is technical. More specifically, we will deal with multi-variable integer valued polynomials, since our arguments, even for single-variable polynomials, naturally lead to multi-variable ones (for details, see the ``dimension-increment'' method, explained before Proposition~\ref{ext} below).}
\end{definition*}

For $d,L\in\mathbb{N}^\ast$, we say that $q\colon \mathbb{Z}^L\to\mathbb{Z}^d$ is an \emph{integer-valued polynomial} 
if $q=(q_1,\ldots,q_d),$ where each $q_i$ is an integer polynomial (meaning that it takes integer values at integers) of $L$ variables. The polynomial $q$ is \emph{non-constant} if some $q_i$ is non-constant. A family of polynomials $p_{1},\dots,p_{k}\colon\mathbb{Z}^{L}\to\mathbb{Z}^{d}$  is \emph{non-degenerate} if they are \emph{essentially non-constant} (meaning that each $p_{i}$ is not a constant polynomial) and \emph{essentially distinct} (meaning that $p_{i}-p_{j}$ is essentially non-constant for all $1\leq i,j\leq k, i\neq j$).\footnote{ Throughout this paper, when we write ``a polynomial $p\colon\mathbb{Z}^{L}\to\mathbb{Z}^{d},$'' we implicitly assume that $p$ is integer-valued, hence, in general, $p$ has rational coefficients.} Using this new language, it follows from \cite{B} that if $T$ is weakly mixing 
and $p_{1},\dots,p_{k}\colon\mathbb{Z}\to\mathbb{Z}$ is a non-degenerate family of polynomials, then $(T^{p_{1}(n)},\dots,T^{p_{k}(n)})_{n\in\mathbb{Z}}$ is jointly ergodic for $\mu$.
Later, it was proved by Frantzikinakis and Kra (in \cite{FK}) that if $p_{1},\dots,p_{k}\colon\mathbb{Z}\to\mathbb{Z}$ is an independent family of polynomials (\emph{i.e.}, every linear combination along integers of the $p_i$'s is non-constant) and $T$ is \emph{totally ergodic} (\emph{i.e.}, $T^{n}$ is ergodic for all $n\in\mathbb{Z}\backslash\{0\}$), then the tuple $(T^{p_{1}(n)},\dots,T^{p_{k}(n)})_{n\in\mathbb{Z}}$ is jointly ergodic for $\mu$ (for integer parts of real valued strongly independent polynomials, see \cite{KK}). 
By combining existing results, we have the following proposition:
\begin{proposition}\label{00}
	Let $d,k,L\in\mathbb{N}^\ast$ and $p_{1},\dots,p_{k}\colon\mathbb{Z}^{L}\to\mathbb{Z}^{d}$ be  a non-degenerate family of polynomials. Let $(X,\mathcal{B},\mu, (T_{g})_{g\in\mathbb{Z}^{d}})$ be  a $\mathbb{Z}^{d}$-system such that:
	\begin{itemize}
		\item[(i)] $T_{g}$ is ergodic for $\mu$ for all $g\in \mathbb{Z}^{d}\setminus\{{\bf 0}\}$; and 
		\item[(ii)] $(T_{p_{1}(n)}\times\dots\times T_{p_{k}(n)})_{n\in\mathbb{Z}^{L}}$ is ergodic for $\mu^{\otimes k}.$\footnote{ $\mu^{\otimes k}$ is the product measure $\mu\otimes \cdots \otimes\mu$ on $X^{k}.$} 
	\end{itemize}	
	Then 	
	$(T^{p_{1}(n)},\dots,T^{p_{k}(n)})_{n\in\mathbb{Z}^{L}}$ is jointly ergodic for $\mu$. 
\end{proposition}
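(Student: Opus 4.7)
My plan assembles three ingredients. First, Walsh's theorem (cited in the introduction) yields $L^2(\mu)$-convergence of the averages $F_N := \frac{1}{|I_N|}\sum_{n \in I_N} \prod_{i=1}^k T_{p_i(n)} f_i$ to some limit $F$, so the task reduces to identifying $F$. A standard multilinear reduction, writing $f_i = c_i + g_i$ with $c_i := \int f_i \, d\mu$ and $\int g_i \, d\mu = 0$ and expanding over subsets of indices, isolates the ``pure constants'' term $\prod_i c_i$; it therefore suffices to prove that $F = 0$ whenever at least one $f_{i_0}$ has mean zero.

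Second, lift to the product system $(X^k, \mu^{\otimes k})$ equipped with the diagonal action $S_n := T_{p_1(n)} \times \cdots \times T_{p_k(n)}$, and set $G := f_1 \otimes \cdots \otimes f_k$, so that $F_N(x) = (M_N G)(x,\dots,x)$, where $M_N G := \frac{1}{|I_N|}\sum_{n \in I_N} S_n G$. By condition (ii) and von Neumann's mean ergodic theorem applied to the $\mathbb{Z}^L$-action $(S_n)$, we obtain $M_N G \to \int G \, d\mu^{\otimes k} = \prod_i c_i$ in $L^2(\mu^{\otimes k})$; this matches the desired target constant.

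The main obstacle is that the diagonal $\Delta_k(X) \subset X^k$ is $\mu^{\otimes k}$-null, so $L^2(\mu^{\otimes k})$-convergence of $M_N G$ does not immediately imply convergence of $F_N = M_N G \circ \Delta_k$ in $L^2(\mu)$. My plan to bridge the gap is to expand $\|F_N\|_{L^2(\mu)}^2$ as a two-parameter correlation
\[ \frac{1}{|I_N|^2}\sum_{n,n' \in I_N} \int \prod_{i=1}^k (T_{p_i(n)} f_i)\,(T_{p_i(n')} \bar f_i)\, d\mu \]
and then apply condition (i): the polynomial family $(n,n') \mapsto (p_i(n), p_i(n'))_{i}$ in $2L$ variables inherits essential non-constancy and essential distinctness from the original $p_i$, so under total ergodicity of the group action each individual correlation along differences $p_i(n) - p_j(n')$ converges to the expected product of integrals, forcing the double average to collapse to $|\prod c_i|^2$. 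The base case is the classical individual ergodic theorem along a single non-constant polynomial, which is guaranteed by (i).

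The step I expect to be most delicate is verifying that non-degeneracy is genuinely preserved in the two-variable enlargement, so that (i) applies at every level of the Cauchy--Schwarz / van der Corput reduction, and that (ii) accounts for the terms where only ``diagonal'' combinations of indices survive. Once this bookkeeping is set up, the proposition follows by tracing the reductions backwards.
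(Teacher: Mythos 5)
Your reduction of the problem to showing $F = 0$ whenever some $f_{i_0}$ has mean zero is fine, and your observation that condition (ii) identifies the limit on the (measure-zero) diagonal of $X^k$ is the right heuristic, but the bridge you propose to cross from $\mu^{\otimes k}$ to $\mu$ is the genuine gap, and it does not close.

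When you expand $\|F_N\|_{L^2(\mu)}^2$ as the two-parameter sum $|I_N|^{-2}\sum_{n,n'}\int \prod_i (T_{p_i(n)}f_i)(T_{p_i(n')}\bar f_i)\,d\mu$, you have turned a $k$-fold correlation in $L$ variables into a $2k$-fold correlation in $2L$ variables. Even granting that the enlarged family $\{p_i(n), p_i(n')\}_i$ remains non-degenerate, the claim that ``each individual correlation along differences $p_i(n)-p_j(n')$ converges to the expected product'' has no justification: the integrand is a genuine $2k$-linear object and does not factor into pairwise correlations. Condition (i) (ergodicity of every $T_g$) does not give weak mixing, so there is no cancellation mechanism of PET type at the base case, and the Frantzikinakis--Kra result requires rational independence of the polynomial family, which non-degeneracy alone does not provide. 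Moreover, ``accounting for the diagonal terms via (ii)'' would now require ergodicity of the $2k$-fold product action $(T_{p_1(n)}\times T_{p_1(n')}\times\cdots)_{(n,n')}$ on $\mu^{\otimes 2k}$, which is strictly stronger than hypothesis (ii) and is not deducible from it. The net effect is that your plan converts the statement for $k$ terms into a harder statement for $2k$ terms, so the recursion never terminates.

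What is actually needed, and what the paper uses, is a characteristic-factor argument: first reduce each $f_i$ to its projection on a suitable Host--Kra factor (this is Johnson's result, \cite[Proposition~2.10]{Jo}, which under hypothesis (i) lets you replace $f_i$ by $\mathbb{E}(f_i\mid Z_{(\mathbb{Z}^d)^{\times\infty}})$), then pass to an inverse limit of nilsystems via the structure theorem (\cref{ppq}), and finally invoke Leibman's equidistribution theorem (\cref{leib}) together with condition (ii) to identify the limit. That structural reduction, not a direct van der Corput computation, is the step your proposal is missing, and without it the argument does not go through.
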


Proposition~\ref{00} follows from \cite[Proposition~2.10]{Jo}, \cite[Theorem~10.1]{HK} (see also Theorem~\ref{ppq} below), and a variation of \cite[Theorem 3.9]{AA} (see Subsection \ref{s5.2} for details). We leave the details of the proof to the interested readers. 

We remark that in all the aforementioned results, one needs to postulate rather strong assumptions for the system. More specifically that either the transformation is weakly mixing or that infinitely many transformations $T_{g}$ are ergodic.
It is then natural to ask if one can obtain joint ergodicity results under weaker conditions, \emph{e.g.}, assuming that only finitely many transformations (or sequences of transformations with specific iterates) are ergodic, and finally, if there are any cases in which the sufficient condition is also necessary. In this direction, it is worth mentioning two results related to our study. 

Let $d\in\mathbb{N}^\ast$ and $(X,\mathcal{B},\mu,T_{1},\dots,T_{d})$ be a measure preserving system with commuting transformations.\footnote{ Here, as in the expression~\eqref{i1*}, $(X,\mathcal{B},\mu,T_{1},\dots,T_{d})$ can be understood as an abbreviation for the $\mathbb{Z}^{d}$-system $(X,\mathcal{B},\mu,(S_{g})_{g\in\mathbb{Z}^{d}})$, where $T_{1}=S_{(1,0,\dots,0)}, T_{2}=S_{(0,1,0,\dots,0)},\dots,T_{d}=S_{(0,\dots,0,1)}$.}
It was proved by Berend and Bergelson (in \cite{BD}) that the tuple $(T^{n}_{1},\dots,T^{n}_{d})_{n\in\mathbb{Z}}$ is jointly ergodic for $\mu$ if and only if $T_{i}T^{-1}_{j}$ is ergodic for $\mu$ for all $1\leq i,j\leq d, i\neq j,$ and $T_{1}\times\dots\times T_{d}$ is ergodic for $\mu^{\otimes d}$. Recently, it was proved by Bergelson, Leibman and Son (in \cite{BLS}) that if $p_{1},\dots,p_{d}\colon\mathbb{Z}\to\mathbb{Z}$ are generalized linear functions (\emph{i.e.}, functions of the form $p(n)=[\alpha_{1}n+\alpha_{2}],\; [\alpha_{3}[\alpha_{1}n+\alpha_{2}]]$, \emph{etc.},   where $[\cdot]$ denotes the integer part, or floor, function), then the tuple $(T^{p_{1}(n)}_{1},\dots,T^{p_{d}(n)}_{d})_{n\in\mathbb{Z}}$ is jointly ergodic for $\mu$ if and only if the sequence $(T^{p_{i}(n)}_{i}T^{-p_{j}(n)}_{j})_{n\in\mathbb{Z}}$ is ergodic for $\mu$ for all $1\leq i,j\leq d, i\neq j$, and the sequence $(T^{p_{1}(n)}_{1}\times\dots\times T^{p_{d}(n)}_{d})_{n\in\mathbb{Z}}$ is ergodic for $\mu^{\otimes d}$.
Note that both results, while being characterizations, hold under only the ergodicity assumption for finitely many transformations and sequences of transformations.

In this paper, we study joint ergodicity properties for sequences of transformations with polynomial iterates.
The following is our first application of Theorems \ref{T:3} and \ref{T:2}:

\begin{theorem}\label{t2}
	Let $d,k,K,L\in\mathbb{N}^\ast$ and $p_{1},\dots,p_{k}\colon\mathbb{Z}^{L}\to\mathbb{Z}^{d}$ be  a non-degenerate family of polynomials of degrees at most $K$. Suppose that $p_{i}(n)=\sum_{v\in\mathbb{N}^{L},\vert v\vert\leq K}b_{i,v}n^{v}$ for some $b_{i,v}\in \mathbb{Q}^{d}$.\footnote{ For $n=(n_{1},\dots,n_{L})\in\mathbb{Z}^{L}$ and $v=(v_{1},\dots,v_{L})\in\mathbb{N}^{L}$, $n^{v}$ denotes the quantity $n^{v_{1}}_{1}\cdot\ldots\cdot n^{v_{L}}_{L}$, and $\vert v\vert=v_{1}+\dots+v_{L}$.} 
		Denote the set of the coefficients and pairwise differences of the coefficients (excluding ${\bf 0}$) of the polynomials with 
		\begin{equation}\label{R}
		    R=\bigcup_{0<\vert v\vert\leq K}\{b_{i,v}, b_{i,v}-b_{i',v}\colon 1\leq i, i'\leq k\}\backslash\{{\bf 0}\}.
		\end{equation}
	Let $(X,\mathcal{B},\mu, (T_{g})_{g\in\mathbb{Z}^{d}})$ be a $\mathbb{Z}^{d}$-system such that:
	\begin{itemize}
		\item[(i)] For all $r\in R$, denoting $G(r)\coloneqq \text{span}_{\mathbb{Q}}\{r\}\cap\Z^{d}$ (see also the relation (\ref{G}) in the corresponding definition in Subsection~\ref{Sub:2.5}), the action $(T_{g})_{g\in G(r)}$ is ergodic for $\mu$;\footnote{ For a subgroup $H$ of $\Z^{d}$, $(T_{g})_{g\in H}$ is \emph{ergodic} for $\mu$ if every $A\in\mathcal{B}$ which is invariant under $T_{g}$ for all $g\in H$ is of $\mu$-measure $0$ or $1$.} and
		\item[(ii)] $(T_{p_{1}(n)}\times\dots\times T_{p_{k}(n)})_{n\in\mathbb{Z}^{L}}$ is ergodic for $\mu^{\otimes k}$.
	\end{itemize}	
	Then  	
	$(T_{p_{1}(n)},\dots,T_{p_{k}(n)})_{n\in\mathbb{Z}^{L}}$ is jointly ergodic for $\mu$. 
\end{theorem}

We remark that Theorem~\ref{t2} is stronger than Proposition~\ref{00} since we only require finitely many $T_{g}$'s to be ergodic, \emph{i.e.}, those $g$'s belonging to $R$, and the set $R$ has an explicit expression.

\begin{example}\label{Eg:1}
	Let $(X,\mathcal{B},\mu,T_{1},T_{2})$ be a system with two commuting transformations and assume that $(T^{n^{2}+n}_{1}\times T^{n^{2}}_{2})_{n\in\mathbb{Z}}$ is ergodic for $\mu\times\mu$. Then Theorem~\ref{t2} implies that if $T_{1}, T_{2}, T_{1}T^{-1}_{2}$ are ergodic for $\mu$, then $(T^{n^{2}+n}_{1}, T^{n^{2}}_{2})_{n\in\mathbb{Z}}$ is jointly ergodic for $\mu$.

	Conversely, the joint ergodicity of $(T^{n^{2}+n}_{1}, T^{n^{2}}_{2})_{n\in\mathbb{Z}}$
	implies the ergodicity of $(T^{n^{2}+n}_{1})_{n\in\mathbb{Z}}$ and $(T^{n^{2}}_{2})_{n\in\mathbb{Z}}$ for $\mu$, which in turn implies the ergodicity of $T_{1}$ and $T_{2}$ for $\mu$. However, the fact that $(T^{n^{2}+n}_{1}, T^{n^{2}}_{2})_{n\in\mathbb{Z}}$ is jointly ergodic for $\mu$ does not necessarily imply that $ T_{1}T^{-1}_{2}$ is ergodic (take for instance $T_1=T_2=T$ where $T$ is a weakly mixing transformation).
\end{example}	

Throughout this paper, Example \ref{Eg:1}
will be our main example via which we demonstrate how the main steps of our method work. Note that annoyingly enough, the expression of the limit of the average of the sequence $T_1^{n^2+n}f_1\cdot T_2^{n^2}f_2$ for bounded $f_1$ and $f_2$ cannot be immediately found from known results, despite the fact that the polynomials $p_1(n)=n^2+n$ and $p_2(n)=n^2$ are essentially distinct.  

\medskip

 The second application of Theorems \ref{T:3} and \ref{T:2} is the following theorem,
 which provides necessary and sufficient conditions for joint ergodicity of the polynomial sequences $T_{i}^{p(n)}, 1\leq i\leq d.$
 This
 generalizes the result from \cite{BD}  and answers a question due to Bergelson:\footnote{ Personal communication.}

\begin{theorem}\label{t1}
	Let  $d,L\in\mathbb{N}^\ast$, $p\colon\mathbb{Z}^{L}\to\mathbb{Z}$ be a polynomial and $(X,\mathcal{B},\mu,T_{1},\dots,T_{d})$ be a system with commuting transformations. Then $(T_{1}^{p(n)},\dots,T_{d}^{p(n)})_{n\in\mathbb{Z}^{L}}$ is jointly ergodic for $\mu$ if and only if both of the following conditions are satisfied:
	\begin{itemize}
		\item[(i)] $T_{i}T^{-1}_{j}$ is ergodic for $\mu$ for all $1\leq i,j\leq d,$ $i\neq j$; and
		\item[(ii)] $((T_{1}\times \dots\times T_{d})^{p(n)})_{n\in\mathbb{Z}^{L}}$ is ergodic for $\mu^{\otimes d}$.
	\end{itemize}
\end{theorem}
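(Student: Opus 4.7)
The plan is to prove the two implications separately. The forward direction (necessity of (i) and (ii)) is the soft one; the reverse direction (sufficiency) is the substantive part and follows directly from Theorem~\ref{t2} by unpacking the set $R$ that arises from the polynomials in question.

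\textbf{Necessity.} Condition (ii) is a consequence of joint ergodicity via the characterization in \cite[Proposition~2.10]{Jo}, which relates joint ergodicity of $(T_i^{p(n)})_i$ for $\mu$ to the ergodicity of the product sequence for $\mu^{\otimes d}$. For condition (i), I would give a short direct argument that exploits the fact that all polynomials coincide with the same $p$. If $f \in L^{\infty}(\mu)$ satisfies $T_iT_j^{-1}f = f$, equivalently $T_if = T_jf$, then $T_i^{p(n)}f = T_j^{p(n)}f$ for every $n\in\mathbb{Z}^L$. Applying joint ergodicity to the tuple of functions that equals $f$ at coordinate $i$, $\bar f$ at coordinate $j$, and the constant $1$ at every other coordinate yields
\[ \Bigl|\int f\,d\mu\Bigr|^2 = \lim_{N\to\infty}\frac{1}{|I_N|}\sum_{n\in I_N}T_i^{p(n)}f\cdot T_j^{p(n)}\bar f = \lim_{N\to\infty}\frac{1}{|I_N|}\sum_{n\in I_N}T_j^{p(n)}(|f|^2) = \int|f|^2\,d\mu, \]
where the first equality is joint ergodicity, the second uses $T_i^{p(n)}f = T_j^{p(n)}f$, and the third uses ergodicity of the sequence $(T_j^{p(n)})$ on $\mu$ (itself a trivial consequence of joint ergodicity with constants at all other coordinates). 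The Cauchy--Schwarz equality case then forces $f$ to be constant.

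\textbf{Sufficiency.} I would realize the commuting transformations as a $\mathbb{Z}^d$-action by setting $T_g := T_1^{g_1}\cdots T_d^{g_d}$ and consider the family of polynomials $\tilde p_i\colon\mathbb{Z}^L\to\mathbb{Z}^d$ defined by $\tilde p_i(n) := p(n)e_i$, so that $T_{\tilde p_i(n)} = T_i^{p(n)}$. Writing $p(n) = \sum_{0<|v|\leq K}c_v n^v$ (the case of constant $p$ being incompatible with (ii) on a non-trivial system), the set $R$ prescribed by Theorem~\ref{t2} becomes
\[ R = \{c_v e_i : c_v\neq 0\}\cup\{c_v(e_i-e_{i'}) : c_v\neq 0,\ i\neq i'\}. \]
For $r = c_v e_i$ we have $G(r) = \mathbb{Z} e_i$ with associated action $(T_i^m)_{m\in\mathbb{Z}}$, and for $r = c_v(e_i-e_{i'})$ we have $G(r) = \mathbb{Z}(e_i-e_{i'})$ with associated action $((T_iT_{i'}^{-1})^m)_{m\in\mathbb{Z}}$. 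Hypothesis (i) of Theorem~\ref{t1} provides the ergodicity of the latter, while hypothesis (ii), projected onto the $i$-th coordinate of $X^d$, supplies ergodicity of each $(T_i^{p(n)})$ on $\mu$ and therefore of $T_i$ itself (since $T_i^a$ ergodic implies $T_i$ ergodic). This verifies hypothesis (i) of Theorem~\ref{t2}. Hypothesis (ii) of Theorem~\ref{t2} is identical to hypothesis (ii) of Theorem~\ref{t1}. The family $\{\tilde p_i\}$ is non-degenerate because $\tilde p_i-\tilde p_{i'} = p\cdot(e_i-e_{i'})$ is non-constant whenever $p$ is. Applying Theorem~\ref{t2} then delivers joint ergodicity of $(T_1^{p(n)},\ldots,T_d^{p(n)})$ for $\mu$.

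The main obstacle is thus not in the deduction of Theorem~\ref{t1} itself, which is routine once Theorem~\ref{t2} is available. All the weight is carried by the seminorm bound Theorem~\ref{T:3}/Theorem~\ref{T:2} (which in turn rests on the Tao--Ziegler concatenation theorem). The only genuinely theorem-specific point is the explicit computation of $R$ and the verification that hypothesis (i) of Theorem~\ref{t1}, combined with the ergodicity consequences of hypothesis (ii), supplies exactly the ergodicity of each $(T_g)_{g\in G(r)}$ required by Theorem~\ref{t2}, with no superfluous conditions.
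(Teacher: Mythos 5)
The sufficiency direction is correct and follows essentially the same route as the paper: realize the commuting system as a $\mathbb{Z}^{d}$-action, set $\tilde p_i(n)=p(n)e_i$, compute the set $R$, and check the ergodicity hypotheses of Theorem~\ref{t2}. Your observation that $G(c_v e_i)=\mathbb{Z}e_i$ and $G(c_v(e_i-e_{i'}))=\mathbb{Z}(e_i-e_{i'})$ regardless of the nonzero scalar $c_v$ is exactly the right bookkeeping (the paper phrases the same point slightly more loosely). The paper invokes Theorem~\ref{T:2} and then repeats the nilsystem/Leibman argument from the proof of Theorem~\ref{t2}; applying Theorem~\ref{t2} directly, as you do, is a small but genuine shortcut. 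Your Cauchy--Schwarz argument for the necessity of~(i) is also correct, and is a bit slicker than the paper's two-step route through Proposition~\ref{inverse}~(i).

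The gap is in the necessity of~(ii), which you call ``the soft one'' and settle with a citation to \cite[Proposition~2.10]{Jo}. The implication that joint ergodicity for $\mu$ forces $(T_1^{p(n)}\times\cdots\times T_d^{p(n)})_{n}$ to be ergodic for $\mu^{\otimes d}$ is the content of Proposition~\ref{inverse}~(ii), whose proof in the paper is substantial: it reduces to the Kronecker factor of the product system via Proposition~\ref{poly} and Lemma~\ref{lemma:product kroneckers}, approximates by eigenfunctions, and then uses the joint ergodicity hypothesis on the diagonal to kill the eigenvalue contribution. Johnson's Proposition~2.10 is cited in the paper only as an ingredient of the \emph{sufficiency} statement Proposition~\ref{00}, so there is no reason to expect it to contain this converse; already for linear $p$ this converse is the harder half of the Berend--Bergelson theorem. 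You should either verify that the reference really does prove the converse for polynomial iterates, or supply an argument along the lines of Proposition~\ref{inverse}~(ii). A smaller slip: the parenthetical ``since $T_i^a$ ergodic implies $T_i$ ergodic'' is not the relevant fact; ergodicity of the sequence $(T_i^{p(n)})_{n}$ does not yield ergodicity of any fixed power $T_i^a$, though it does directly force $T_i$ to be ergodic, since any $T_i$-invariant $f$ is fixed by every $T_i^{p(n)}$ and hence must equal $\int f\,d\mu$.
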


As an immediate example, for a system $(X,\mathcal{B},\mu,T_{1},T_{2})$ with two commuting transformations, the sequence $(T^{n^{2}}_{1}, T^{n^{2}}_{2})_{n\in\mathbb{Z}}$ is jointly ergodic for $\mu$ if and only if $T_{1}T^{-1}_{2}$ is ergodic for $\mu$ and $(T^{n^{2}}_{1}\times T^{n^{2}}_{2})_{n\in\mathbb{Z}}$ is ergodic for $\mu\times\mu$.

One might wonder if there are better descriptions of condition (ii) of Theorem \ref{t1}. In Section~\ref{s233}, we provide several criteria and equivalent conditions to (ii), related to the eigenvalues of the system. 

Based on the work of \cite{BD,BLS} and the main results of this paper, we have a natural conjecture:

\begin{conjecture}
	Let $d,k,L\in\mathbb{N}^\ast$, $p_{1},\dots,p_{k}\colon\mathbb{Z}^{L}\to\mathbb{Z}^{d}$ be polynomials and
	$(X,\mathcal{B},\mu,(T_{g})_{g\in\mathbb{Z}^{d}})$ be a $\mathbb{Z}^{d}$-system. Then $(T_{p_{1}(n)},\dots,T_{p_{k}(n)})_{n\in\mathbb{Z}^{L}}$ is jointly ergodic for $\mu$
	if and only if both of the following conditions are satisfied:
	\begin{itemize}
		\item[(i)] $(T_{p_{i}(n)-p_{j}(n)})_{n\in\mathbb{Z}^{L}}$ is ergodic for $\mu$ for all $1\leq i,j\leq k,$ $i\neq j$; and
		\item[(ii)] $(T_{p_{1}(n)}\times\dots\times T_{p_{k}(n)})_{n\in\mathbb{Z}^{L}}$ is ergodic for $\mu^{\otimes k}$.
	\end{itemize}
\end{conjecture}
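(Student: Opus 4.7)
The proof naturally splits into necessity and sufficiency. The \emph{necessity} is essentially standard. For (i), suppose $A\in\mathcal{B}$ is invariant under $T_{p_i(n)-p_j(n)}$ for every $n\in\mathbb{Z}^L$, and apply joint ergodicity with $f_i=\mathbf{1}_A$, $f_j=\mathbf{1}_{X\setminus A}$, and $f_\ell=1$ otherwise: the invariance yields $T_{p_i(n)}\mathbf{1}_A=T_{p_j(n)}\mathbf{1}_A$ $\mu$-a.e., so the averaged product is identically zero, forcing $\mu(A)\mu(X\setminus A)=0$. For (ii), one argues as in Proposition~2.10 of \cite{Jo}, which upgrades $L^2(\mu)$-joint ergodicity on $X$ to ergodicity of the product action on $(X^{k},\mu^{\otimes k})$.

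For \emph{sufficiency}, the plan mirrors the proof scheme of Theorem~\ref{t2}. Decompose each $f_i=g_i+c_i$ with $c_i=\int f_i\,d\mu$ and $\int g_i\,d\mu=0$, then expand the product $\prod_i T_{p_i(n)}f_i$: the all-constants term yields the desired limit, while every other term contains a factor $g_{i_0}$ of zero mean. Handling sub-products of length $<k$ either by an induction on $k$ or by absorbing the constants $c_i$ into the $f_i$'s (they are still in $L^\infty(\mu)$), it suffices to prove that whenever $\int f_{i_0}\,d\mu=0$ for some $i_0$, the full multiple average converges to $0$ in $L^2(\mu)$. By Theorem~\ref{T:2}, this in turn reduces to showing that $\|f_{i_0}\|_{\{G(r)^{\times\infty}\}_{r\in R}}=0$ under assumptions (i) and (ii).

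The crux, and main expected obstacle, is identifying the kernel of the infinite-step seminorm $\|\cdot\|_{\{G(r)^{\times\infty}\}_{r\in R}}$ purely in terms of (i) and (ii). The natural route is threefold: (a) use the concatenation theorem of Tao--Ziegler \cite{TZ}, as developed in this paper, to express the characteristic factor of the seminorm as a join of factors attached to the subgroups $G(r)$; (b) reduce, via an inverse-limit/pro-nilsystem argument, to a nilmanifold model on which the orbits $(T_{p_1(n)},\dots,T_{p_k(n)})$ become concrete polynomial orbits on a (pro-)nilmanifold; and (c) apply Leibman's polynomial equidistribution criterion (Theorem~B of \cite{L05}, restated as Theorem~\ref{leib} below) to convert equidistribution into linear ergodicity conditions that can be checked against~(i) and~(ii).

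The genuine gap between Theorem~\ref{t2} (requiring ergodicity of each subgroup action $(T_g)_{g\in G(r)}$) and the conjectural condition~(i) (only ergodicity of the polynomial difference sequences $(T_{p_i(n)-p_j(n)})_{n\in\mathbb{Z}^L}$) is the hardest part. Indeed, ergodicity of a sequence of polynomial iterates does not generally imply ergodicity of the underlying subgroup action — this can already fail on non-totally-ergodic group rotations — so closing the gap seems to require either a coefficient-sensitive refinement of the concatenation machinery that replaces the groups $G(r)$ by genuine polynomial subgroupoids determined by the $p_i-p_j$, or a direct reduction of the infinite-step seminorms to appropriate finite-step Host--Kra seminorms on the pro-nilfactor, where the polynomial equidistribution theory is sharper and condition~(ii) can be used to kill all non-trivial contributions.
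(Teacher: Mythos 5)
This statement is explicitly labeled a \emph{conjecture} in the paper: it is not proved, and the gap you identify at the end of your proposal is precisely why it is left open. The paper establishes only two special cases. Theorem~\ref{t2} gives sufficiency under the stronger hypothesis that $(T_g)_{g\in G(r)}$ is ergodic for every coefficient $r\in R$, which is a genuine group-action assumption that condition~(i) of the conjecture does not deliver. Theorem~\ref{t1} proves the full ``if and only if'' only in the special case $p_i(n)=p(n)e_i$ for a single scalar polynomial $p$, where $p_i(n)-p_j(n)=p(n)(e_i-e_j)$ so that ergodicity of the polynomial sequence $(T_{p_i(n)-p_j(n)})_n$ actually implies ergodicity of the single transformation $T_iT_j^{-1}$ (see the argument in the proof of Theorem~\ref{t1}, where condition~(i) is used to conclude $T_{e_i-e_j}$ is ergodic). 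In the general conjectural setting, ergodicity of a polynomial sequence of transformations does not give ergodicity of any group action, and the coefficient sets $G(r)$ of Theorem~\ref{T:2} cannot be controlled by~(i); you correctly flag this as the obstruction, so your ``plan'' is an honest roadmap for an open problem, not a proof.

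Separately, your necessity argument for~(i) has a gap. You show that no set $A$ can be invariant under $T_{p_i(n)-p_j(n)}$ for all $n$; but ``$(T_{p_i(n)-p_j(n)})_{n\in\mathbb{Z}^L}$ is ergodic for $\mu$'' is, in this paper's terminology, the statement that $\mathbb{E}_{n\in\mathbb{Z}^L}T_{p_i(n)-p_j(n)}f=\int f\,d\mu$ in $L^2(\mu)$ for every $f\in L^\infty(\mu)$ and every F\o lner sequence, which is strictly stronger than triviality of the simultaneous invariant $\sigma$-algebra (already for a single ergodic $T$ and $p(n)=n^2$, the sequence $(T^{n^2})_n$ may fail to be ergodic). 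Compare with the proof of Proposition~\ref{inverse}(i): there one takes a witness $g$ for the failure of sequence-ergodicity, sets $g'=\mathbb{E}_{n\in\mathbb{Z}^L}(T_1T_2^{-1})^{p(n)}g$, and pairs the multiple average against $g'$ to force $\int g'^{\,2}\,d\mu>\bigl(\int g\,d\mu\bigr)^2$, contradicting joint ergodicity. This is the argument you would need to adapt; and Proposition~\ref{inverse} itself is only stated and proved for the case of equal iterates $p_i(n)=p(n)e_i$, so even the necessity direction of the general conjecture would need a new proof along these lines.
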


\subsection{Method and Organization} Section~\ref{s:2} contains all the background material and Section~\ref{s233} the conditions equivalent to (ii) of Theorem \ref{t1} (see Proposition \ref{234}).

In order to prove the joint ergodicity results of this paper, we introduce a characterization theorem (Theorem \ref{T:2}, the stronger version of Theorem \ref{T:3}) in Section \ref{s:4}, which allows us to study joint ergodicity properties under the assumption that all the functions $f_{1},\dots,f_{k}$ are measurable with respect to certain Host-Kra characteristic factors (see Section \ref{s:2} for definitions).

Once Theorem \ref{T:2} is proven, a straightforward argument using results from \cite{AA, HK} yields the main results of this paper (see Subsection \ref{s5.2} for details).
The proofs of Theorems~\ref{t2}  and ~\ref{t1}, under the assumption of the validity of Theorem~\ref{T:2}, are enclosed in  Section~\ref{s:4} as well. In the same section, we also introduce the two main ingredients for proving Theorem~\ref{T:2}, namely Propositions~\ref{pet} (which we prove in Section~\ref{s:pet}) and \ref{pet3} (which we prove in Section~\ref{s:pet3}).

To obtain the characterization theorem (Theorem \ref{T:2}), we employ the, by now classical, ``PET induction'' (first introduced in \cite{B}), which allows us to convert the average in (\ref{444}) to a special case where every $p_{i}(n)$ is a linear function by repeatedly applying the van der Corput lemma (Lemma~\ref{lemma:iteratedVDC}). Adaptations of this method have been extensively studied in the past in \cite{CFH,Jo,L9} too. We explain it in detail in Section~\ref{s:3} tailored to our purposes. 

There are two major difficulties to carry out the PET induction in proving Theorem~\ref{T:2} though. The first is that although PET induction variations used in the past allow us to eventually reduce the left hand side of (\ref{444}) to an expression with linear iterates, they provide no information on the coefficients of these iterates, which is a crucial detail in describing the set $R$ defined in Theorem~\ref{t2}. To overcome this difficulty, we introduce a new alteration of this technique in Section~\ref{s:pet} (see the proof of Proposition~\ref{pet}) which allows us to keep track of the coefficients of the polynomials when we iteratively apply van der Corput (vdC) operations.

The second, and perhaps the most important problem, is how to bound the  left hand side of (\ref{444}) by some  Host-Kra-type seminorm of each function $f_i$. It turns out that for a general non-degenerate family of polynomials $p_{1},\dots,p_{k}\colon\mathbb{Z}^{L}\to\mathbb{Z}^{d},$
we can use the PET induction to bound the left hand side of (\ref{444}) by an averaged Host-Kra seminorm, as the right hand sides of \eqref{30} and \eqref{35} (see Section~\ref{s:4}). The problem-goal now is to bound such an averaged seminorm effectively by a single one. In the past, in analogous situations, issues like these were resolved under additional restrictions, such as the assumption that $d=1$ (\cite{B}), that all $T_{g}$'s are ergodic (\cite{FK,Jo}), or that $p_{1},\dots,p_{k}$ have different (and positive) degrees (\cite{CFH}). In this paper, we address this difficulty in Section~\ref{s:pet3} (see the proof of Proposition \ref{pet3}) in its full generality.
Our method is based on the recent
work of Tao and Ziegler on a concatenation theorem (\cite{TZ}).

\subsection*{Acknowledgements.}
We thank Vitaly Bergelson for bringing the problem that we are addressing in Theorem~\ref{t1} to our attention, and also for providing useful advice.  We also thank Andreu Ferr\'e Moragues and Nikos Frantzikinakis for pointing out a mistake in the initial version of the article regarding the deduction of Theorems~\ref{t2} and \ref{t1} from Proposition~\ref{T:2}.  Thanks also go to the anonymous referee for providing helpful comments and suggestions.  Finally, the second author thanks the Center for Mathematical Modeling (CMM) of the University of Chile, where this work started,  while the first author thanks The Ohio State University, where this work was completed, for their hospitality.

\subsection{Definitions and notations}\label{def}
We denote with $\mathbb{N}^\ast,$ $\mathbb{N},$ $\mathbb{Z},$ $\mathbb{Q},$ $\mathbb{R},$ $\mathbb{C}$ and $\mathbb{S}^1$ the sets of positive integers, non-negative integers, integers, rational numbers, real numbers, complex numbers and complex numbers of magnitude $1$ respectively. If $X$ is a set, and $d\in \mathbb{N}^\ast$, $X^d$ denotes the Cartesian product $X\times\cdots\times X$ of $d$ copies of $X$. 

We say that a tuple $(X,\mathcal{B},\mu,(T_{g})_{g\in \Z^d})$ is a \emph{$\Z^d$-measure preserving system} (or a \emph{$\Z^d$-system}) if $(X,\mathcal{B},\mu)$ is a probability space and $T_{g}\colon X\to X$ are measurable, measure preserving transformations on $X$ such that $T_{(0,\dots,0)}=id$ and $T_{g}\circ T_{h}=T_{g+h}$ for all $g,h\in \Z^{d}$. The system is \emph{ergodic} if for any $A\in\mathcal{B}$ such that $T_{g}A=A$ for all $g\in \Z^{d}$, we have that $\mu(A)\in \{0,1\}$.

We say that $(Y,\mathcal{D},\nu,(S_{g})_{g\in \Z^d})$ is a \emph{factor} of $(X,\mathcal{B},\mu,(T_{g})_{g\in \Z^d})$ if there exists a measurable map $\pi \colon (X,\mathcal{B},\mu)\to (Y,\mathcal{D},\nu)$ such that $\mu(\pi^{-1}(A))=\nu(A)$ for all $A\in \mathcal{D}$, and that $\pi\circ T_{g}=S_{g}\circ \pi$ for all $g\in\Z^{d}$.
A factor $(Y,\mathcal{D},\nu,(S_{g})_{g\in \Z^d})$ of $(X,\mathcal{B},\mu,(T_{g})_{g\in \Z^d})$ can be identified as a sub-$\sigma$-algebra $\mathcal{B}'$ of $\mathcal{B}$ or a subspace $V$ of $L^{2}(\mu)$ by setting $\mathcal{B}'\coloneqq \pi^{-1}(\mathcal{D})$ or $V\coloneqq L^{2}(\nu)\circ\pi$.
Given two $\sigma$-algebras $\mathcal{B}_1$ and $\mathcal{B}_2$, their \emph{joining} $\mathcal{B}_1\vee \mathcal{B}_2$ is the $\sigma$-algebra generated by $B_1\cap B_2$ for all $B_1\in \mathcal{B}_1$ and $B_2\in\mathcal{B}_2$, \emph{i.e.}, the smallest $\sigma$-algebra containing both $\mathcal{B}_1$ and $\mathcal{B}_2$. This definition extends to a countable collection of $\sigma$-algebras $\mathcal{B}_i$, $i\in \N,$ which we denote by $\bigvee_{i=0}^\infty \mathcal{B}_i.$

For simplicity all functions in $L^{\infty}(\mu),$ throughout the paper, are assumed to be real valued. All our results are easily extended to complex valued functions as well. 

We will denote with $e_i$ the vector which has $1$ as its $i$th coordinate and $0$ elsewhere. We use in general lower-case letters to symbolize both numbers and vectors but bold letters to symbolize vectors of vectors to highlight this exact fact, in order to make the content more reader-friendly. The only exception to this convention is the vector ${\bf 0}$ (\emph{i.e.}, the vector with coordinates only $0$'s) which we always symbolize in bold.

\subsubsection{Notation on averaging}
Throughout this article, we use the following notations about averages.
Let  $(a(n))_{n\in \Z^L}$ be a sequence of real numbers, or a sequence of measurable functions on a probability space $(X,\mathcal{B},\mu)$.
Denote 
\begin{align*}
\mathbb{E}_{n\in A}a(n)  &\coloneqq  \frac{1}{|A|} \sum_{n \in A } a({n}),  \;\; \text{ where A is a finite subset of} \;\;\mathbb{Z}^L, \\
\E^{\square}_{n\in \Z^{L}}a(n)  &\coloneqq \vl \mathbb{E}_{n \in [-N,N]^L } a(n),  \text{\footnotemark}\\ 
\E_{n\in\mathbb{Z}^{L}}a(n) &\coloneqq \sup_{\substack{  (I_{N})_{N\in\mathbb{N}} \\ \text{ F\o lner seq.} }}\varlimsup_{N\to\infty} \mathbb{E}_{n\in I_N} a(n),
\\
\mathbb{E}^{\square}_{n\in \Z^{L}}a(n)  &\coloneqq \lim_{N\to\infty} \mathbb{E}_{n \in [-N,N]^L } a(n) \;\; \text{ (provided that the limit exists)}, \\ 
\mathbb{E}_{n\in\mathbb{Z}^{L}}a(n) &\coloneqq \lim_{N\to\infty} \mathbb{E}_{n\in I_N} a(n) \text{ (provided that the limit exists for all F\o lner sequences $(I_{N})_{N\in\N}$)}.
\end{align*}
\footnotetext{ We use the symbol $\square$ to highlight the fact that the average is along the boxes $[-N,N]^{L}.$}
It is worth noticing that if the limit $\lim_{N\to\infty} \mathbb{E}_{n\in I_N} a(n)$ exists for all F\o lner sequences, then this limit does not depend on the particular F\o lner sequence. Also, along the paper, we use the notation $(I_{N})_{N\in\N}$ to denote a F\o lner sequence in $\Z^L$. 

We also consider \emph{iterated} averages. Let $(a(h_{1},\dots,h_{s}))_{h_{1},\dots,h_{s}\in \Z^L}$ be a multi-parameter sequence. We denote
\[\E_{h_{1},\dots,h_{s}\in \mathbb{Z}^{L}}a(h_{1},\dots,h_{s})\coloneqq \E_{h_{1}\in\mathbb{Z}^{L}}\ldots\E_{h_{s}\in\mathbb{Z}^{L}}a(h_{1},\dots,h_{s})\]
and adopt similar conventions for $\mathbb{E}_{h_{1},\dots,h_{s}\in \mathbb{Z}^{L}}$, $\E^{\square}_{h_{1},\dots,h_{s}\in \mathbb{Z}^{L}}$ and $\mathbb{E}^{\square}_{h_{1},\dots,h_{s}\in \mathbb{Z}^{L}}$ respectively.

\begin{convention}	
Throughout this paper, all the limits of measurable functions on a measure preserving system are taken in $L^{2}$ (unless otherwise stated). Even though all the expressions with polynomial iterates that we will encounter converge (in $L^2$) by \cite{W}, we don't a priori postulate any existence of such limits throughout the whole article. 
\end{convention}

\section{Background material}\label{s:2}

\subsection{The van der Corput lemma} The main tool in reducing the complexity of polynomial families and running the PET induction is the van der Corput lemma (and its variations), whose original proof can be found in \cite{B}. We state a convenient for us version that can be easily deduced from the one in \cite{B}.

\begin{lemma}[\cite{B}] \label{lemma:VdCclassic} Let $\mathcal{H}$ be a Hilbert space,
	$a\colon \Z^L\to \mathcal{H}$ be a sequence bounded by $1$, and $(I_{N})_{N\in\N}$ be a F\o lner sequence in $\Z^L$. Then 
	\begin{align*} \label{equation:VdC} \varlimsup_{N\to \infty} \left \| \mathbb{E}_{n\in I_N} a(n)\right \|^2 & \leq 4 \E^{\square}_{h\in\mathbb{Z}^{L}} \varlimsup_{N\to \infty}\vert \mathbb{E}_{n \in I_N} \langle a(n+h), a(n) \rangle  \vert. 
	\end{align*}
\end{lemma}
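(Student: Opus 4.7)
The plan is to run the standard smoothing-plus-Cauchy-Schwarz argument underlying van der Corput's inequality, with an auxiliary box-size $H \in \mathbb{N}^{\ast}$ that will be sent to infinity at the end.

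First, using the F\o lner property of $(I_{N})_{N \in \mathbb{N}}$, for any fixed shift $h \in [0,H]^{L}$ one has $\varlimsup_{N \to \infty}\|\mathbb{E}_{n \in I_{N}}(a(n+h) - a(n))\| = 0$, because $|I_{N} \triangle (h + I_{N})|/|I_{N}| \to 0$ and $\|a(n)\| \leq 1$. Averaging this estimate over $h \in [0,H]^{L}$, I may replace $\mathbb{E}_{n \in I_{N}} a(n)$ by the smoothed expression $\mathbb{E}_{n \in I_{N}} \mathbb{E}_{h \in [0,H]^{L}} a(n+h)$ without changing the $\varlimsup$ of its norm.

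Next, I apply Cauchy-Schwarz to pull the squaring inside the outer average, yielding an upper bound of $\mathbb{E}_{n \in I_{N}} \|\mathbb{E}_{h \in [0,H]^{L}} a(n+h)\|^{2}$. Expanding the inner norm into a double sum and performing the change of variables $k = h - h'$, I obtain a single average over $k \in [-H, H]^{L}$ with a Fej\'er-type kernel $w_{H}(k) = \prod_{i=1}^{L}(1 - |k_{i}|/(H+1))_{+}$, together with an inner average of the form $\langle a(n+k+h'), a(n+h')\rangle$ over a box in $h'$. A second application of the F\o lner property lets me shift the $n$-variable by $h'$ and replace this inner expression by $\mathbb{E}_{n \in I_{N}}\langle a(n+k), a(n)\rangle$, up to an error that vanishes as $N \to \infty$.

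Finally, bounding $w_{H}(k) \leq 1$, taking absolute values, and comparing the normalized weight over $[-H, H]^{L}$ to the uniform one (this is where the constant $4$ is extracted), I arrive at the bound
\[
4 \cdot \mathbb{E}_{k \in [-H, H]^{L}} \varlimsup_{N \to \infty}\bigl|\mathbb{E}_{n \in I_{N}}\langle a(n+k), a(n)\rangle\bigr|.
\]
Taking $\varlimsup_{H \to \infty}$ identifies the right-hand side with $4 \E^{\square}_{k \in \mathbb{Z}^{L}} \varlimsup_{N \to \infty}|\mathbb{E}_{n \in I_{N}}\langle a(n+k), a(n)\rangle|$, as claimed. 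I expect the bookkeeping of the constant $4$ and the correct chaining of the two successive F\o lner shifts to be the only technical point; everything else is a direct Hilbert-space computation, and the stated lemma is a routine reformulation of the classical inequality from \cite{B}.
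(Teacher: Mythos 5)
The paper offers no proof of this lemma; it merely remarks that the statement ``can be easily deduced from the one in \cite{B}'', so there is no in-paper argument to compare against. Your outline is the standard smoothing-and-Cauchy--Schwarz derivation of van der Corput's inequality, and the structure is correct: the two F\o lner applications (first to replace $\mathbb{E}_{n\in I_N}a(n)$ by $\mathbb{E}_{n\in I_N}\mathbb{E}_{h\in[0,H]^L}a(n+h)$ with error vanishing in $N$, and again to recenter the correlations after expanding the square), together with the identification of the tensor-product Fej\'er weight $w_H(k)=\prod_{i=1}^{L}\bigl(1-|k_i|/(H+1)\bigr)_{+}$, are exactly the right steps.

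The gap is in the final constant extraction, which you flag as ``the only technical point'' but do not actually carry out. After bounding $w_H\leq 1$, the normalized weight you produce on $[-H,H]^L$ is $w_H(k)/(H+1)^{L}$, and comparing it to the uniform density $(2H+1)^{-L}$ yields the factor $\bigl(\tfrac{2H+1}{H+1}\bigr)^{L}$, which tends to $2^{L}$, not $4$, as $H\to\infty$. In fact the ratio of the two densities at $k=0$ is already $\bigl(\tfrac{2H+1}{H+1}\bigr)^{L}$, so no sharpening of this particular comparison can yield the dimension-independent constant $4$ once $L\geq 3$. Your argument therefore proves the lemma with $2^{L}$ in place of $4$. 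This is immaterial for the paper's downstream use --- \cref{lemma:iteratedVDC} and \cref{induction} simply propagate whatever constant appears as $C^{\kappa}$, and the main theorems absorb it into an unspecified $C$ --- but as written the proposal does not establish the statement with the literal constant $4$, and you should either track the constant honestly as $2^{L}$ or note that any finite constant suffices for the applications.
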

We also need the following variation of Lemma~\ref{lemma:VdCclassic}: 

\begin{lemma} \label{lemma:iteratedVDC}
	Let $\mathcal{H}$ be a Hilbert space,
	$(a(n;h_1,\ldots,h_s))_{(n;h_1,\ldots,h_s)\in (\Z^{L})^{s+1}}$\footnote{ We use this unorthodox notation to separate the variable $n$ from the $h_i$'s. The variable $n$ will play a different, comparing to the $h_i$'s, role later.} be a sequence bounded by $1$ in $\mathcal{H}$. 
	 Then for $\kappa\in \mathbb{N}$,
	\begin{align*}
	&\overline{\mathbb{E}}^{\square}_{h_1,\ldots,h_s\in \Z^{L}}\F\varlimsup_{N\to \infty}\left \| \mathbb{E}_{n\in I_{N}} a(n;h_1,\ldots,h_s) \right \|^{2\kappa}\\
	&\leq 4^{\kappa}\overline{\mathbb{E}}^{\square}_{h_1,\ldots,h_s,h_{s+1}\in \Z^{L}}\F\varlimsup_{N\to \infty} \left \vert   \mathbb{E}_{n\in I_{N}} \left \langle a(n+h_{s+1};h_1,\ldots,h_s) ,  a(n;h_1,\ldots,h_s)  \right \rangle \right \vert^{\kappa}.
	\end{align*}	
\end{lemma}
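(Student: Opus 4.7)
The plan is to derive this iterated version from the classical van der Corput lemma (Lemma~\ref{lemma:VdCclassic}) by applying it once for each fixed tuple $(h_1,\ldots,h_s)$ and then raising both sides to the $\kappa$-th power, using Jensen's inequality to pull the power inside the new average in $h_{s+1}$.

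First, I would fix $h_1,\ldots,h_s\in\mathbb{Z}^L$ and an arbitrary F\o lner sequence $(I_N)_{N\in\mathbb{N}}$, and set $b(n)\coloneqq a(n;h_1,\ldots,h_s)$, which is bounded by $1$ in $\mathcal{H}$. Lemma~\ref{lemma:VdCclassic} gives
\[\varlimsup_{N\to\infty}\left\|\mathbb{E}_{n\in I_N}b(n)\right\|^{2}\leq 4\,\overline{\mathbb{E}}^{\square}_{h_{s+1}\in\mathbb{Z}^L}\varlimsup_{N\to\infty}\left|\mathbb{E}_{n\in I_N}\langle b(n+h_{s+1}),b(n)\rangle\right|.\]
Raising both sides to the $\kappa$-th power, on the left I use that $(\varlimsup_N Y_N)^{\kappa}=\varlimsup_N Y_N^{\kappa}$ for non-negative $Y_N$ (valid since $x\mapsto x^{\kappa}$ is continuous and increasing on $[0,\infty)$), converting $\|\cdot\|^{2}$ into $\|\cdot\|^{2\kappa}$. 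On the right, Jensen's inequality for the convex function $x\mapsto x^{\kappa}$ applied to each box average $\mathbb{E}_{h_{s+1}\in[-M,M]^L}$, followed by the same power-through-$\varlimsup_M$ identity, yields $\bigl(\overline{\mathbb{E}}^{\square}_{h_{s+1}}X(h_{s+1})\bigr)^{\kappa}\leq\overline{\mathbb{E}}^{\square}_{h_{s+1}}X(h_{s+1})^{\kappa}$; applying once more $(\varlimsup_N|\cdot|)^{\kappa}=\varlimsup_N|\cdot|^{\kappa}$ inside then produces
\[\varlimsup_{N\to\infty}\left\|\mathbb{E}_{n\in I_N}b(n)\right\|^{2\kappa}\leq 4^{\kappa}\,\overline{\mathbb{E}}^{\square}_{h_{s+1}\in\mathbb{Z}^L}\varlimsup_{N\to\infty}\left|\mathbb{E}_{n\in I_N}\langle b(n+h_{s+1}),b(n)\rangle\right|^{\kappa}.\]

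To conclude, for each fixed $h_{s+1}$ I majorize the inner $\varlimsup_N|\mathbb{E}_{n\in I_N}\langle\cdot,\cdot\rangle|^{\kappa}$ by $\F\varlimsup_N|\mathbb{E}_{n\in J_N}\langle\cdot,\cdot\rangle|^{\kappa}$, which no longer depends on the original $(I_N)$; this frees me to take $\F$ on the left as well. Averaging over $h_1,\ldots,h_s$ via $\overline{\mathbb{E}}^{\square}$ and using the iterated-definition identity $\overline{\mathbb{E}}^{\square}_{h_1,\ldots,h_s}\,\overline{\mathbb{E}}^{\square}_{h_{s+1}}=\overline{\mathbb{E}}^{\square}_{h_1,\ldots,h_{s+1}}$ delivers the claimed bound. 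There is no serious obstacle here; the only care needed lies in the bookkeeping of the orders of $\F$, $\varlimsup_N$ and $\overline{\mathbb{E}}^{\square}$, and in the one application of Jensen's inequality that introduces the $\kappa$-th power.
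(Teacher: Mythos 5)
Your proof is correct and follows essentially the same route as the paper's: apply the classical van der Corput lemma for each fixed $(h_1,\ldots,h_s)$, raise to the $\kappa$-th power and push the power inside the $h_{s+1}$-average via Jensen's inequality, handle the interchange of $\sup$ over F\o lner sequences with the box average, and finally average over $h_1,\ldots,h_s$. The only cosmetic difference is that you take the supremum over F\o lner sequences at the end rather than at the beginning; the underlying interchange argument is identical.
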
 

\begin{proof}
	For fixed $h_1,\ldots,h_s$, we apply \cref{lemma:VdCclassic} for $a(n)=a(n;h_1,\ldots,h_s)$ and $h=h_{s+1}.$ By Jensen's inequality, we have
	\begin{equation}\nonumber
		\begin{split}
			&\F\varlimsup_{N\to \infty} \left \| \mathbb{E}_{n \in I_{N}} a(n;h_1,\ldots,h_s) \right \|^{2\kappa} 
			\\& \leq 4^{\kappa} \F\left(\overline{\mathbb{E}}^{\square}_{h_{s+1}\in \Z^{L}} \varlimsup_{N\to \infty} \left \vert  \mathbb{E}_{n\in I_{N}} \left \langle a(n+h_{s+1};h_1,\ldots,h_s) ,  a(n;h_1,\ldots,h_s)  \right \rangle \right \vert \right)^{\kappa} 
			\\& \leq  4^\kappa\F \overline{\mathbb{E}}^{\square}_{h_{s+1}\in \Z^{L}} \varlimsup_{N\to \infty} \left \vert   \mathbb{E}_{n\in I_{N}} \left \langle a(n+h_{s+1};h_1,\ldots,h_s),  a(n;h_1,\ldots,h_s)  \right \rangle \right \vert^{\kappa}
			\\& \leq  4^\kappa \overline{\mathbb{E}}^{\square}_{h_{s+1}\in \Z^{L}} \F\varlimsup_{N\to \infty} \left \vert   \mathbb{E}_{n\in I_{N}} \left \langle a(n+h_{s+1};h_1,\ldots,h_s),  a(n;h_1,\ldots,h_s)  \right \rangle \right \vert^{\kappa}.
		\end{split}	
	\end{equation}	
	The conclusion follows by taking the limsup of the averages over $h_{s},\ldots,h_{1}.$ 
\end{proof}

\subsection{Host-Kra characteristic factors}
The use of Host-Kra characteristic factors is a fundamental tool in studying problems related to multiple averages. They were first introduced in \cite{HK} for ergodic $\Z$-systems (see also \cite{Z2}) and later for $\Z^{d}$-systems in \cite{Ho}. In this paper, we need to use a slightly more general version of these characteristic factors, which is similar to the one used in \cite{Sun}.

For a $\Z^d$-measure preserving system ${\bf{X}}=(X,\mathcal{B},\mu,(T_{g})_{g\in \Z^d})$ and a subgroup $H$ of $\Z^d$, $\mathcal{I}(H)$ denotes the sub-$\sigma$-algebra of $(T_h)_{h\in H}$-invariant sets, \emph{i.e.}, sets $A\in \mathcal{B}$ such that $T_h A=A$ for all $h\in H$.  For an invariant sub-$\sigma$-algebra  $\mathcal{A}$ of $\mathcal{B}$, the measure $\mu\times_{\mathcal{A}} \mu$ denotes the \textit{relative independent product of $\mu$ with itself over $\mathcal{A}$}. That is, $\mu\times_{\mathcal{A}} \mu$ is the measure defined on the product space $X\times X$ as 
\[ \int_{X\times X} f\otimes g~ d(\mu\times_{\mathcal{A}} \mu)= \int_{X} \mathbb{E}(f\vert \mathcal{A})  \mathbb{E}(g \vert \mathcal{A})d\mu   \]
for all $f,g\in L^{\infty}(\mu)$. 

Let   $H_{1},\dots,H_{k}$ be subgroups of $\Z^d$. Define 
\[\mu_{H_{1}}=\mu\times_{\I(H_{1})}\mu\]
and for $k>1,$
\[\mu_{H_{1},\dots,H_{k}}=\mu_{H_{1},\dots,H_{k-1}}\times_{\mathcal{I}(H_{k}^{[k-1]})}\mu_{H_{1},\dots,H_{k-1}},\]
where $H^{[k-1]}_{k}$ denotes 
the subgroup of $(\Z^{d})^{2^{k-1}}$ consisting of all the elements of the form 
  $(h_k,\ldots,h_k)$ ($2^{k-1}$ copies of $h_{k}$) for some $h_{k}\in H_{k}$. The characteristic factor $Z_{H_{1},\dots,H_{k}}(\X)$ is defined to be the sub-$\sigma$-algebra of $\mathcal{B}$ such that \[\mathbb{E}(f\vert Z_{H_{1},\dots,H_{k}}(\X))=0 \text{ if and only if } \Vert f\Vert_{H_{1},\dots,H_{k}}^{2^{k}}\coloneqq \int_{X^{[k]}}f^{\otimes 2^{k}}\,d\mu_{H_{1},\dots,H_{k}}=0,\]
where $f^{\otimes 2^k}=f\otimes \cdots \otimes f$ and $X^{[k]}=X\times\cdots\times X$ ($2^k$ copies of $f$ and $X$ respectively).
When there is no confusion, we simply write $Z_{H_{1},\dots,H_{k}}:=Z_{H_{1},\dots,H_{k}}(\X)$ (with $\Vert \cdot\Vert_{H_{1},\dots,H_{k}}$ being the corresponding seminorm). 
Similarly to the proof of Lemma 4 of \cite{Ho} (or Lemma 4.3 of \cite{HK}), one can show that $Z_{H_{1},\dots,H_{k}}$ is well defined. Note that when $k=1$, $Z_{H_{1}}=\I(H_{1})$.
When we have $k$ copies of $H,$ we write $Z_{H^{\times k}}\coloneqq Z_{H,\dots,H},$ and $Z_{H^{\times \infty}}\coloneqq \bigvee_{k=1}^{\infty} Z_{H^{\times k}}$.

\begin{convention}
	For convenience, we adopt a flexible way to write the Host-Kra characteristic factors combining the aforementioned notation. For example, if $A=\{H_{1},H_{2}\}$, then the notation $Z_{A,H_{3},H^{\times 2}_{4},(H_{i})_{i=5,6}}$ refers to  $Z_{H_{1},H_{2},H_{3},H_{4},H_{4},H_{5},H_{6}}$, and $Z_{H_{1},H^{\times \infty}_{2},H^{\times \infty}_{3}}$ refers to $\bigvee_{k=1}^{\infty}Z_{H_{1},H^{\times k}_{2},H^{\times k}_{3}}$.\footnote{ Or, equivalently $\bigvee_{k_1=1}^{\infty}\bigvee_{k_2=1}^{\infty}Z_{H_{1},H^{\times k_1}_{2},H^{\times k_2}_{3}}$.  By the Lemma~\ref{replacement0} (i), the factors are independent of the order in which we take the subgroups.} We adopt a similar flexibility for the subscripts of the seminorms.  
	
	When each $H_{i}$ is generated by a single element $g_{i}$, we write $\Vert \cdot\Vert_{g_{1},\dots,g_{d}} \coloneqq \Vert \cdot\Vert_{H_{1},\dots,H_{d}}$ and  $Z_{g_{1},\dots,g_{d}}\coloneqq Z_{H_{1},\dots,H_{d}}$ in short.
\end{convention}

For the rest of the section, $\X=(X,\mathcal{B},\mu,(T_g)_{g\in\mathbb{Z}^d})$ will denote, as usual, a $\mathbb{Z}^d$-system.

\medskip

Let $H$ be a subgroup of $\Z^{d}$ and $(a(g))_{g\in H}$ be a sequence on a Hilbert space. If for all F\o lner sequences $(I_{N})_{N\in\N}$ in $H$, the limit $\lim_{N\to\infty}\mathbb{E}_{g\in I_{N}}a(g)$ exists, we then use $\mathbb{E}_{g\in H}a(g)$ to denote this limit.
The following theorem is classical (see for example \cite[Theorem~8.13]{ET}).

\begin{theorem}[Mean ergodic theorem for $\mathbb{Z}^{d}$-actions]\label{erg}
	For every $f\in L^{2}(\mu)$ and every subgroup $H$ of $\Z^{d}$, the limit $\mathbb{E}_{g\in H}T_{g}f$ exists in $L^{2}(\mu)$ and equals to $\mathbb{E}(f\vert \I (H))$ (or  $\mathbb{E}(f\vert Z_{H})$).
\end{theorem}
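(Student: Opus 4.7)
The plan is to run the standard Hilbert space mean ergodic argument, adapted to a general subgroup $H$ of $\mathbb{Z}^{d}$. First I would note that $H$ is itself a finitely generated free abelian group, and the operators $U_h := T_h$ (acting on $L^{2}(\mu)$) form a unitary representation of $H$, so the question is purely one about unitary representations of finitely generated abelian groups on a Hilbert space. In particular, one does not need to exploit the $\mathbb{Z}^{d}$-action structure beyond the fact that each $T_h$ is a unitary.

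The core step is the orthogonal decomposition
\[
L^{2}(\mu) = V_{\mathrm{inv}} \oplus V_{\mathrm{cob}},
\]
where $V_{\mathrm{inv}} := \{f \in L^{2}(\mu) : T_h f = f \text{ for all } h \in H\} = L^{2}(\mathcal{I}(H))$ and $V_{\mathrm{cob}}$ is the closure of $\mathrm{span}\{T_h g - g : h \in H,\; g \in L^{2}(\mu)\}$. The orthogonality $V_{\mathrm{inv}} \perp V_{\mathrm{cob}}$ follows from the elementary computation that $f \perp (T_h g - g)$ for all $g,h$ forces $T_{-h}f = f$ for all $h \in H$, so $f \in V_{\mathrm{inv}}$; conversely, since each $T_h$ is unitary with $T_h^* = T_{-h}$, one shows $V_{\mathrm{inv}}^{\perp} = V_{\mathrm{cob}}$.

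Next I would check convergence on each summand along an arbitrary F\o lner sequence $(I_N)_{N\in\mathbb{N}}$ of $H$. On $V_{\mathrm{inv}}$ the average $\mathbb{E}_{g\in I_N} T_g f$ equals $f$ identically, so trivially converges to $\mathbb{E}(f \mid \mathcal{I}(H)) = f$. On coboundaries $f = T_h g - g$ with $g \in L^{2}(\mu)$ the average telescopes:
\[
\mathbb{E}_{n\in I_N}(T_{n+h}g - T_n g) = \mathbb{E}_{n\in I_N + h} T_n g - \mathbb{E}_{n\in I_N} T_n g,
\]
whose $L^{2}$-norm is bounded by $\tfrac{|(I_N+h)\triangle I_N|}{|I_N|}\,\|g\|_{L^{2}(\mu)}$, which tends to $0$ by the F\o lner property. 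An $\varepsilon/3$ argument extends the vanishing to all of $V_{\mathrm{cob}}$, using that the operators $\mathbb{E}_{n\in I_N} T_n$ are contractions uniformly in $N$.

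Combining the two pieces, for any $f \in L^{2}(\mu)$, decomposing $f = f_{\mathrm{inv}} + f_{\mathrm{cob}}$ gives
\[
\lim_{N\to\infty} \mathbb{E}_{g\in I_N} T_g f = f_{\mathrm{inv}} = \mathbb{E}(f \mid \mathcal{I}(H))
\]
in $L^{2}(\mu)$, independently of the choice of F\o lner sequence, which justifies the notation $\mathbb{E}_{g\in H} T_g f$. There is no serious obstacle here, this being entirely classical; the only minor subtlety is the F\o lner-independence of the limit, which is resolved automatically once we identify the limit intrinsically as the projection $\mathbb{E}(\cdot \mid \mathcal{I}(H))$.
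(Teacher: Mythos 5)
Your proof is correct, and it is exactly the standard von Neumann/Riesz mean ergodic argument (orthogonal decomposition into the invariant subspace $L^2(\mathcal{I}(H))$ and the closure of coboundaries, telescoping on coboundaries via the F\o lner property). The paper itself does not prove the statement but cites it as classical from \cite[Theorem~8.13]{ET}, which gives essentially this same decomposition, so your argument matches the intended one.
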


The following are some basic properties of the Host-Kra seminorms.

\begin{lemma}\label{replacement0}
	Let $H_{1},\dots,H_{k},H'$ be subgroups of $\Z^{d}$ and $f\in L^{\infty}(\mu)$. 
	\begin{itemize}
		\item[(i)] For every permutation $\sigma\colon\{1,\dots,k\}\to\{1,\dots,k\}$, we have that \[Z_{H_{1},\dots,H_{k}}(\X)=Z_{H_{\sigma(1)},\dots,H_{\sigma(k)}}(\X).\]
		\item[(ii)] If $\I(H_{j})=\I(H')$, then $Z_{H_{1},\dots,H_{j},\dots,H_{k}}(\X)=Z_{H_{1},\dots,H_{j-1},H',H_{j+1},\dots,H_{k}}(\X)$.
		\item[(iii)] For $k\geq 2$ we have that
		\[\Vert f\Vert^{2^{k}}_{H_{1},\dots,H_{k}}
		=\mathbb{E}_{g\in H_{k}}\Bigl\Vert f\cdot T_{g}f\Bigr\Vert^{2^{k-1}}_{H_{1},\dots,H_{k-1}},\] while for $k=1,$	
		\[\Vert f\Vert^{2}_{H_{1}}
		=\mathbb{E}_{g\in H_{1}}\int_{X} f\cdot T_{g}f\,d\mu.\]
		\item[(iv)] Let $k\geq 2$. If $H'\leq H_{j}$ is of finite index, then \[Z_{H_{1},\dots,H_{j},\dots,H_{k}}(\X)=Z_{H_{1},\dots,H_{j-1},H',H_{j+1},\dots,H_{k}}(\X).\]
		\item[(v)] If $H'\leq H_{j}$, then $Z_{H_{1},\dots,H_{j},\dots,H_{k}}(\X)\subseteq Z_{H_{1},\dots,H_{j-1},H',H_{j+1},\dots,H_{k}}(\X)$.
		
	\item[(vi)] For $k\geq 2$,  $\|f\|_{H_1,\ldots,H_{k-1}}\leq \| f \|_{H_1,\ldots,H_{k-1},H_k}$ and thus $Z_{H_1,\ldots,H_{k-1}}(\X)\subseteq  Z_{H_1,\ldots,H_{k-1},H_k}(\X).$
	
	 	\item[(vii)] 	For $k\geq 1$, if $H_1',\ldots, H_k'$ are subgroups of $
	 		\Z^d$, then $Z_{H_1,\ldots,H_k}(\X) \vee Z_{H_1',\ldots,H_k'}(\X) \subseteq Z_{H_1',\ldots,H_k',H_1,\ldots,H_k}(\X).$ 

	\end{itemize}	
\end{lemma}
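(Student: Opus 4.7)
The plan is to establish the items in the order (iii), (i), (ii), (v), (vi), (vii), (iv), since (iii) is the recursive engine that drives all the others. First I would unfold the definition of $\mu_{H_1,\ldots,H_k}$ to obtain
\[\|f\|^{2^k}_{H_1,\ldots,H_k}=\int_{X^{[k-1]}}\mathbb{E}(f^{\otimes 2^{k-1}}\mid \I(H_k^{[k-1]}))^2\,d\mu_{H_1,\ldots,H_{k-1}},\]
then apply Theorem \ref{erg} to the diagonal $H_k$-action on $(X^{[k-1]},\mu_{H_1,\ldots,H_{k-1}})$ to rewrite the conditional expectation as a limit of averages $\mathbb{E}_{h\in H_k}T_h^{\otimes 2^{k-1}}$. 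Swapping limit and integral (licit because $f$ is bounded) gives the recursion in (iii); the case $k=1$ is analogous.

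Iterating (iii) produces the ``box'' formula
\[\|f\|^{2^k}_{H_1,\ldots,H_k}=\mathbb{E}_{h_1\in H_1}\cdots\mathbb{E}_{h_k\in H_k}\int_X\prod_{\epsilon\in\{0,1\}^k}T_{\epsilon_1 h_1+\cdots+\epsilon_k h_k}f\,d\mu,\]
whose integrand is visibly symmetric in the pairs $(H_i,h_i)$. Since each iterated average converges by MET and the total quantity is determined by the joint distribution, it is invariant under any permutation of the pairs, which gives (i). For (ii), I would use (i) to place $H_j$ at the last position; then the innermost average collapses to $\int G_0\,\mathbb{E}(G_0\mid\I(H_k))\,d\mu$ for $G_0=\prod_{\epsilon'\in\{0,1\}^{k-1}}T_{\epsilon'\cdot h'}f$, and this depends on $H_k$ only through $\I(H_k)$, so the replacement by $H'$ is forced once $\I(H_j)=\I(H')$.

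For (v), (vi) and (vii), Jensen's inequality applied to the inner conditional expectation does all the work. If $H'\leq H_j$ (put $j=k$ by (i)), then $\I(H_k^{[k-1]})\subseteq\I((H')^{[k-1]})$, so conditional-expectation monotonicity gives $\|f\|_{\ldots,H_k}\leq\|f\|_{\ldots,H'}$ and hence the kernel inclusion (v). For (vi), $\int G^2\,d\nu\geq(\int G\,d\nu)^2$ applied to $G=\mathbb{E}(f^{\otimes 2^{k-1}}\mid\I(H_k^{[k-1]}))$ yields $\|f\|^{2^k}_{H_1,\ldots,H_k}\geq(\|f\|^{2^{k-1}}_{H_1,\ldots,H_{k-1}})^2=\|f\|^{2^k}_{H_1,\ldots,H_{k-1}}$, hence the seminorm comparison and factor inclusion. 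Item (vii) then follows by iterating (vi) to adjoin $H'_1,\ldots,H'_k$ one at a time and permuting via (i).

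The main obstacle will be (iv). One inclusion $Z_{\ldots,H_j,\ldots}\subseteq Z_{\ldots,H',\ldots}$ is free from (v); the subtle direction is the reverse. Here I would use (i) to put $j=k$ and combine (iii) with the coset decomposition $H_j=\bigsqcup_{i=1}^{n}(h_i+H')$ to write
\[\|f\|^{2^k}_{\ldots,H_j}=\frac{1}{n}\sum_{i=1}^{n}\mathbb{E}_{h\in H'}\|f\cdot T_{h+h_i}f\|^{2^{k-1}}_{H_1,\ldots,H_{k-1}}.\]
Decomposing $L^2(\I((H')^{[k-1]}))$ into character eigenspaces for the finite quotient $H_j/H'$, the gap $\|f\|^{2^k}_{\ldots,H'}-\|f\|^{2^k}_{\ldots,H_j}$ becomes a sum of non-negative ``$\chi$-character contributions'' indexed by non-trivial characters, each $H_j$-equivariant. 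The hypothesis $\|f\|_{\ldots,H_j}=0$, combined with the outer averages provided by the assumption $k\geq 2$, should force each such contribution to vanish. Carrying out this character bookkeeping cleanly (and pinpointing where $k\geq 2$ enters) is the technical heart of the lemma.
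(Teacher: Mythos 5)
The plan for (iii), (v), (vi), (vii) matches the paper essentially word for word; the Jensen and Cauchy--Schwarz steps are correct. For (i) and (ii) the paper simply cites \cite[Lemma~2.2]{Sun}, whereas you propose deriving a symmetric ``box formula'' for $\|f\|^{2^k}_{H_1,\ldots,H_k}$ as an iterated average. Be aware that this is not free: the box formula involves nested Ces\`aro limits over the groups $H_1,\ldots,H_k$, and reordering nested limits requires an argument (precisely the content of the cited lemma). Saying the quantity ``is determined by the joint distribution'' assumes what needs proof.

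The genuine gap is in (iv), and it matters because this is the one item with real content. You correctly reduce, via (i) and (iii) and the coset decomposition $H_j=\bigsqcup_{i=1}^n(h_i+H')$, to
\[\|f\|^{2^k}_{H_1,\ldots,H_{k-1},H_j}=\frac{1}{n}\sum_{i=1}^{n}\mathbb{E}_{h\in H'}\bigl\|f\cdot T_{h+h_i}f\bigr\|^{2^{k-1}}_{H_1,\ldots,H_{k-1}},\]
but then you reach for a decomposition into character eigenspaces of $H_j/H'$, a far more elaborate route than is needed, and you stop short of carrying it out (``should force each such contribution to vanish''). The point you are missing is elementary and is exactly the paper's one-line argument: for $k\geq 2$ each summand $\|f\cdot T_{h+h_i}f\|^{2^{k-1}}_{H_1,\ldots,H_{k-1}}$ is a nonnegative number (it is a power of a seminorm), so keeping only the term with $h_i=0$ yields
\[\|f\|^{2^k}_{H_1,\ldots,H_{k-1},H_j}\geq\frac{1}{n}\,\mathbb{E}_{h\in H'}\bigl\|f\cdot T_{h}f\bigr\|^{2^{k-1}}_{H_1,\ldots,H_{k-1}}=\frac{1}{n}\|f\|^{2^k}_{H_1,\ldots,H_{k-1},H'}.\]
Together with the reverse inequality from (v) this gives simultaneous vanishing, hence equality of the factors. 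This also answers your question of where $k\geq 2$ enters: when $k=1$ the inner quantity is $\int_X f\cdot T_{h+h_i}f\,d\mu$, which can be negative, so the drop-all-but-one-coset step fails; the paper's remark after the lemma gives the explicit counterexample $Z_{\mathbb{Z}}\neq Z_{2\mathbb{Z}}$. You should replace the character-theoretic sketch by this positivity argument; as written the proof of (iv) is not complete.
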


\begin{proof}
	(i) and (ii) follow from \cite[Lemma~2.2]{Sun} (for (i), see also \cite{Ho}).
	
	To show (iii), if $k\geq 2$, then 
	\begin{eqnarray*}
		\Vert f\Vert^{2^{k}}_{H_{1},\dots,H_{k}}
		& = &\int_{X^{[k]}} f^{\otimes 2^{k}}\,d\mu_{H_{1},\dots,H_{k}}
		\\& = &\int_{X^{[k-1]}} f^{\otimes 2^{k-1}}\cdot \mathbb{E}( f^{\otimes 2^{k-1}}\vert \I(H^{[k-1]}_{k})) \,d\mu_{H_{1},\dots,H_{k-1}}
		\\& = &\mathbb{E}_{g\in H_{k}}\int_{X^{[d-1]}} f^{\otimes 2^{k-1}} \cdot  (T_{g}f)^{\otimes 2^{k-1}}\,d\mu_{H_{1},\dots,H_{k-1}} 
		\\& = &\mathbb{E}_{g\in H_{k}}\Bigl\Vert f\cdot T_{g}f\Bigr\Vert^{2^{k-1}}_{\X,H_{1},\dots,H_{k-1}},
	\end{eqnarray*}
	where we invoked the mean ergodic theorem (Theorem~\ref{erg}) in the penultimate equality. Similarly, for $k=1$,
	\begin{equation}\nonumber
	\begin{split} 
	\Vert f\Vert^{2}_{H_{1}}
	=\int_{X^{2}} f\otimes f\,d\mu_{H_{1}}
	=\int_{X}f\cdot  \mathbb{E}(f\vert \I(H_{1})) \,d\mu
	=\mathbb{E}_{g\in H_{1}}\int_{X}f\cdot  T_{g}f \,d\mu.
	\end{split}
	\end{equation}
	
	We now prove (iv). For convenience, we use multiplicative notation. By (i), we may assume without loss of generality that $j=k$.
	Suppose that $H_{k}=\sqcup_{i=1}^{l}g_{i}H'$ for some $l>0$ and $g_{i}\in \Z^{d}, 1\leq i\leq l$. We may assume that $g_{1}$ is the identity element in $\Z^d$. Let $(I_{N})_{N\in\mathbb{N}}$ be any F\o lner sequence in $H'$. We claim that  $(I_{N}\cdot\{g_{1},\dots,g_{l}\})_{N\in\mathbb{N}}$ is a F\o lner sequence in $H_{k}$. Indeed, by the elementary  inclusion $(A\cup B)\triangle C \subseteq (A\triangle C) \cup (B\triangle C)$ it follows that
	\begin{align*}(I_{N}\cdot\{g_{1},\dots,g_{l}\}) \triangle g (I_{N}\cdot\{g_{1},\dots,g_{l}\})  \subseteq \bigcup_{1\leq i,j\leq l} I_{N}g_i \triangle g I_N  g_j 
	 = \bigcup_{1\leq i,j\leq l}  g_i I_{N} \triangle g_j g I_{N},
	\end{align*}
	and since $\vert I_N\vert^{-1}\cdot\vert g_i I_{N} \triangle g_j g I_{N}\vert=\vert I_N\vert^{-1}\cdot\vert I_{N} \triangle (g_i^{-1}g_j g)I_{N} \vert\to 0$ as $N\to\infty,$  the claim follows. 
	
	By (iii), we have that
	\begin{equation}
	\begin{split}
	\Vert f\Vert^{2^{k}}_{H_{1},\dots,H_{k-1},H_{k}}
	& =\mathbb{E}_{g\in H_{k}}\Bigl\Vert f\cdot T_{g}f\Bigr\Vert^{2^{k-1}}_{H_{1},\dots,H_{k-1}}
	\\ & =\lim_{N\to\infty}\frac{1}{l\vert I_{N}\vert}\sum_{i=1}^{l}\sum_{g\in I_{N}}\Bigl\Vert f\cdot T_{g_{i}g}f\Bigr\Vert^{2^{k-1}}_{H_{1},\dots,H_{k-1}}
	\\& \geq \lim_{N\to\infty}\frac{1}{l\vert I_{N}\vert}\sum_{g\in I_{N}}\Bigl\Vert f\cdot T_{g}f\Bigr\Vert^{2^{k-1}}_{H_{1},\dots,H_{k-1}}
	\\& =\frac{1}{l}\Vert f\Vert^{2^{k}}_{H_{1},\dots,H_{k-1},H'}.
	\end{split}
	\label{50}
	\end{equation}
	On the other hand, since $\I({H}^{[k-1]}_{k})$ is a sub-$\sigma$-algebra of $\I({H'}^{[k-1]})$, by the Cauchy-Schwarz inequality,
	\begin{equation}
	\begin{split}
	\Vert f\Vert^{2^{k}}_{H_{1},\dots,H_{k-1},H'}
	& =\int_{X^{[k]}}f^{\otimes 2^{k}} \,d\mu_{H_{1},\dots,H_{k-1},H'}
	\\&=\int_{X^{[k-1]}}f^{\otimes 2^{k-1}}\cdot \mathbb{E}(f^{\otimes 2^{k-1}}\vert \I({H'}^{[k-1]})) \,d\mu_{H_{1},\dots,H_{k-1}} \\
	&=\int_{X^{[k-1]}} \Bigl  \vert \mathbb{E}(f^{\otimes 2^{k-1}} \vert \I({H'}^{[k-1]})) \Bigr \vert^2\,d\mu_{H_{1},\dots,H_{k-1}}	
	\\	&\geq \int_{X^{[k-1]}} \Bigl  \vert \mathbb{E}(f^{\otimes 2^{k-1}} \vert \I({H}^{[k-1]}_{k})) \Bigr \vert^2\,d\mu_{H_{1},\dots,H_{k-1}}	
	\\&=\int_{X^{[k-1]}}f^{\otimes 2^{k-1}}\cdot \mathbb{E}(f^{\otimes 2^{k-1}}\vert \I({H}^{[k-1]}_{k})) \,d\mu_{H_{1},\dots,H_{k-1}}
	\\&=\int_{X^{[k]}}f^{\otimes 2^{k}}\,d\mu_{H_{1},\dots,H_{k-1},H_{k}}=\Vert f\Vert^{2^{k}}_{H_{1},\dots,H_{k-1},H_{k}}.
	\end{split}
	\label{8}
	\end{equation}
	Therefore, $\Vert f\Vert_{H_{1},\dots,H_{k-1},H_{k}}=0\Leftrightarrow\Vert f\Vert_{H_{1},\dots,H_{k-1},H'}=0$, and the conclusion follows.
	
	(v) Since $\Vert f\Vert^{2^{k}}_{H_{1},\dots,H_{k-1},H_{k}}\leq \Vert f\Vert^{2^{k}}_{H_{1},\dots,H_{k-1},H'}$ by (\ref{8}) whenever $H'$ is a subgroup of $H_{k}$, we have that  $Z_{H_{1},\dots,H_{k-1},H_{k}}(\X)\subseteq Z_{H_{1},\dots,H_{k-1},H'}(\X)$. So (v) follows from (i).

 (vi) Similarly to (iii), and by Jensen inequality we have \begin{eqnarray*}
			\Vert f\Vert^{2^{k}}_{H_{1},\dots,H_{k-1},H_{k}}
			& = &\int_{X^{[k]}} f^{\otimes 2^{k}}\,d\mu_{H_{1},\dots,H_{k}}
			\\& = &\int_{X^{[k-1]}} \mathbb{E}(f^{\otimes 2^{k-1}}\vert \I(H^{[k-1]}_{k}))^2 \,d\mu_{H_{1},\dots,H_{k-1}}
			\\ & \geq & \Bigl(\int_{X^{[k-1]}} \mathbb{E}(f^{\otimes 2^{k-1}}\vert \I(H^{[k-1]}_{k})) \,d\mu_{H_{1},\dots,H_{k-1}} \Bigr)^2
			\\ & = & \Bigl(\int_{X^{[k-1]}} f^{\otimes 2^{k-1}}\, d\mu_{H_{1},\dots,H_{k-1}}  \Bigr)^2
			\\ & = & \| f  \|_{H_1,\ldots,H_{k-1}}^{2^{k}}
	\end{eqnarray*} 
(note that the penultimate equality holds because the function and its conditional expectation have the same integral), from where the conclusion follows. 

(vii) Applying (vi) several times, we get that both $Z_{H_1,\ldots,H_k}(\X)$ and$ Z_{H_1',\ldots,H_k'}(\X)$ are sub-$\sigma$-algebras of $Z_{H_1',\ldots,H_k',H_1,\ldots,H_k}(\X)$, hence so is their joining. \end{proof}

\begin{remark*}
	We caution the reader that Lemma~\ref{replacement0} (iv) is not valid for $k=1$. 
	In fact, for an ergodic $\Z$-system $\X=(X,\mathcal{B},\mu, T)$ where $T^2$ is not ergodic, we have $Z_{\mathbb{Z}}(\X)=I(\mathbb{Z})\neq I(2\mathbb{Z})=Z_{2\mathbb{Z}}(\X)$. The reason why this fails is that for $k=1$
 the inequality in (\ref{50}) is no longer valid since the term $\Bigl\Vert f\cdot T_{g_{i}g}f\Bigr\Vert^{2^{k-1}}_{\X,H_{1},\dots,H_{k-1}}$ is replaced by $\int_{X} f\cdot T_{g_{i}g}f\,d\mu$, which might  be negative.
\end{remark*}

As an immediate corollary of Lemma \ref{replacement0} (ii), we have:

\begin{corollary}\label{ppp}
	Let $H_{1},\dots,H_{k}$ be subgroups of $\Z^{d}$. If  the $H_{i}$-action $(T_{g})_{g\in H_{i}}$ is ergodic on $\X$ for all $1\leq i\leq k$, then $Z_{H_{1},\dots,H_{k}}(\X)=Z_{({\Z^{d}})^{\times k}}(\X)$.
\end{corollary}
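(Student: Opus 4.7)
The plan is to deduce this directly from Lemma~\ref{replacement0}~(ii), which allows us to replace any $H_j$ by another subgroup $H'$ of $\mathbb{Z}^d$ provided $\I(H_j)=\I(H')$ as sub-$\sigma$-algebras (mod null sets).

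First I would observe that the ergodicity of the $H_i$-action on $\X$ means precisely that $\I(H_i)$ is the trivial sub-$\sigma$-algebra $\{\emptyset, X\}$ (modulo $\mu$-null sets). Since every $H_i$-invariant set is in particular invariant under $(T_g)_{g\in H_i}$, and $H_i \leq \mathbb{Z}^d$ implies $\I(\mathbb{Z}^d) \subseteq \I(H_i)$, we conclude $\I(\mathbb{Z}^d)$ is also trivial; in particular $\I(\mathbb{Z}^d) = \I(H_i)$ for each $1 \leq i \leq k$.

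Next, I would apply Lemma~\ref{replacement0}~(ii) iteratively: at the $j$th step, replace the $j$th subgroup $H_j$ by $\mathbb{Z}^d$, which is permissible since $\I(H_j) = \I(\mathbb{Z}^d)$. After $k$ such replacements, we arrive at
\[ Z_{H_1,\ldots,H_k}(\X) \;=\; Z_{\mathbb{Z}^d,H_2,\ldots,H_k}(\X) \;=\; \cdots \;=\; Z_{\mathbb{Z}^d,\ldots,\mathbb{Z}^d}(\X) \;=\; Z_{(\mathbb{Z}^d)^{\times k}}(\X), \]
which is exactly the desired conclusion. No step here presents any real obstacle; the only minor care needed is in observing that the hypothesis on each individual $H_i$-action forces ergodicity of the full $\mathbb{Z}^d$-action, which is what unlocks the repeated application of (ii).
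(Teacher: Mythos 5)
Your proof is correct and matches the paper's intended argument exactly: the paper labels this an immediate corollary of Lemma~\ref{replacement0}~(ii), and your iterative replacement of each $H_j$ by $\mathbb{Z}^d$ (justified by the triviality of $\I(H_j)=\I(\mathbb{Z}^d)$ under the ergodicity hypothesis) is precisely that deduction.
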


\subsection{Structure theorem and nilsystems}
Let $X=N/\Gamma$, where $N$ is a ($k$-step) nilpotent Lie group and $\Gamma$ is a discrete cocompact subgroup of $N$. Let $\mathcal{B}$ be the Borel $\sigma$-algebra of $X,$ $\mu$ the Haar measure on $X,$ and for $g\in \Z^{d},$ let $T_{g}\colon X\to X$ with $T_{g}x=b_{g}\cdot x$ for some group homomorphism $g\mapsto b_{g}$ from $\Z^{d}$ to $N$. We say that $\X=(X,\mathcal{B},\mu,(T_{g})_{g\in \Z^{d}})$ is a \emph{($k$-step) $\Z^{d}$-nilsystem}.

An important reason which makes the Host-Kra characteristic factors powerful is their connection with nilsystems.  
The following is a slight generalization of \cite[Theorem~3.7]{Z} (see \cite[Lemma~4.4.3 and Theorem~4.10.1]{JG}, or Lemma~\ref{replacement0} (ii) and \cite[Theorem~3.7]{Sun}), which is a higher dimensional version of Host-Kra structure theorem (\cite{HK}).

\begin{theorem}[Structure theorem]\label{ppq}
	Let $\X$ be an ergodic $\mathbb{Z}^{d}$-system. Then $Z_{(\mathbb{Z}^{d})^{\times k}}(\X)$ is an inverse limit of $(k-1)$-step $\mathbb{Z}^{d}$-nilsystems. 
\end{theorem}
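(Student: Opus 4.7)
The plan is to establish the theorem by induction on $k$, following the architecture of the Host--Kra structure theorem for $\Z$-systems (\cite[Theorem~10.1]{HK}) as adapted to the $\Z^{d}$-setting in \cite[Theorem~3.7]{Z} and \cite[Theorem~3.7]{Sun}. The base case $k=1$ is immediate: by the ergodicity of $(T_{g})_{g\in\Z^{d}}$, the factor $Z_{\Z^{d}}(\X)=\I(\Z^{d})$ is the trivial $\sigma$-algebra, corresponding to a one-point system, which qualifies as a degenerate $0$-step $\Z^{d}$-nilsystem.

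For the inductive step, suppose that $Z_{(\Z^{d})^{\times(k-1)}}(\X)$ is an inverse limit of $(k-2)$-step $\Z^{d}$-nilsystems $Y_{\alpha}$. The first goal is to show that $Z_{(\Z^{d})^{\times k}}(\X)$ is a compact abelian (isometric) extension of $Z_{(\Z^{d})^{\times(k-1)}}(\X)$. Using Lemma~\ref{replacement0}(iii), the seminorm $\Vert f\Vert_{(\Z^{d})^{\times k}}^{2^{k}}$ can be unfolded as $\mathbb{E}_{g\in\Z^{d}}\Vert f\cdot T_{g}f\Vert_{(\Z^{d})^{\times(k-1)}}^{2^{k-1}}$. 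Combined with the description of the cubic self-joining $\mu_{(\Z^{d})^{\times k}}$ as a relatively independent product over $\I((\Z^{d})^{[k-1]})$, a standard argument identifies the extension $Z_{(\Z^{d})^{\times k}}\to Z_{(\Z^{d})^{\times(k-1)}}$ as abelian, encoded by a cocycle $\sigma\colon\Z^{d}\times Z_{(\Z^{d})^{\times(k-1)}}\to U$ into a compact abelian group $U$.

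The second goal is to show that for each $g\in\Z^{d}$ the slice $\sigma_{g}$ satisfies a Conze--Lesigne-type functional equation of order $k-1$: modulo coboundaries, it is a polynomial function of degree $k-1$ on the $(k-1)$-step nilfactor, and the family $(\sigma_{g})_{g\in\Z^{d}}$ jointly satisfies the cocycle identity $\sigma_{g+g'}=\sigma_{g}\cdot(\sigma_{g'}\circ T_{g})$. Once this is established, cocycle-rigidity arguments (\cite{HK, Sun, Z}) let one assemble the resulting abelian extensions of the tower $(Y_{\alpha})$ into a tower of $(k-1)$-step $\Z^{d}$-nilsystems whose inverse limit coincides with $Z_{(\Z^{d})^{\times k}}(\X)$.

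The main obstacle is the joint cocycle analysis across the $d$ commuting transformations: the commutation relations $T_{g}T_{g'}=T_{g'}T_{g}$ translate into compatibility constraints on $(\sigma_{g})_{g\in\Z^{d}}$ that must be satisfied simultaneously with the Conze--Lesigne equation for each $\sigma_{g}$. The full ergodicity of the $\Z^{d}$-action plays a crucial role here, since it is what guarantees that the ergodic decomposition of the cubic measure over $\I((\Z^{d})^{[k-1]})$ interacts coherently with each individual $T_{g}$, yielding a genuinely $(k-1)$-step nilpotent (rather than merely abelian or solvable) translation action on the limit factor.
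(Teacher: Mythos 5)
The paper does not prove this theorem; it states it as a citation, namely as a slight generalization of \cite[Theorem~3.7]{Z} (via \cite[Theorem~3.7]{Sun}), which in turn adapts the Host--Kra structure theorem \cite{HK} to the higher-dimensional setting. Your sketch correctly reproduces the architecture of that proof: induction on $k$, with the trivial base case from ergodicity, then showing $Z_{(\Z^{d})^{\times k}}$ is a compact abelian extension of $Z_{(\Z^{d})^{\times(k-1)}}$, classifying the extension cocycle via a Conze--Lesigne-type functional equation, and assembling the resulting tower into nilsystems via cocycle rigidity. So in that sense you are tracking the same approach the paper relies on.

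However, what you have written is an outline, not a proof: the two central steps — that the extension is abelian, and that the cocycle satisfies a degree-$(k-1)$ functional equation whose solutions produce $(k-1)$-step nilsystems — are asserted via ``a standard argument'' and ``cocycle-rigidity arguments'' rather than established. These are precisely the parts where the work lies, and they are not routine in the $\Z^{d}$ setting: one must verify that the cubic measure decomposition over $\I((\Z^{d})^{[k-1]})$, the Conze--Lesigne equation, and the rigidity lemmas all go through jointly for the commuting family $(T_{g})_{g\in\Z^{d}}$, and that the resulting Lie group is nilpotent of the claimed step. Your last paragraph names this obstacle but does not resolve it. Since the paper itself delegates the full argument to \cite{Z} and \cite{Sun}, your proposal is best viewed as a correct roadmap pointing to the same sources, rather than a self-contained proof; to actually verify the statement one must work through those references or reprove the cocycle analysis.
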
 

The 1-step Host-Kra nilfactor is the \emph{Kronecker factor}, which is intimately related to the spectrum of the system (\cite{HK}). We say that a non-$\mu$-a.e. constant function $f\in L^{\infty}(\mu)$ is an \emph{eigenfunction} of the $\mathbb{Z}^{d}$-system $\X=(X,\mathcal{B},\mu, (T_{g})_{g\in\mathbb{Z}^{d}})$ if $T_{g}f=\lambda_{g} f$ for all $g\in \mathbb{Z}^{d},$ where  $g\mapsto \lambda_{g}$ is a group homomorphism from $\mathbb{Z}^{d}$ to $\mathbb{S}^{1}$. For each $g\in\mathbb{Z}^{d}$, we say that $\lambda_{g}$ is an \emph{eigenvalue} of $\X$. 
If $(X,\mathcal{B},\mu, T)$ is a $\mathbb{Z}$-system,  we say that a non-$\mu$-a.e. constant function $f\in L^{\infty}(\mu)$ is an \emph{eigenfunction} of $T$ if $Tf=\lambda f$ for some $\lambda\in\mathbb{S}^{1},$ and  we say that $\lambda$ is an \emph{eigenvalue} of $T$.

The \emph{Kronecker factor} $\mathcal{K}(\X)$ of the $\mathbb{Z}^{d}$-system $\X=(X,\mathcal{B},\mu, (T_{g})_{g\in\mathbb{Z}^{d}})$ is the sub-$\sigma$-algebra of $\mathcal{B}$ that corresponds to the algebra of functions spanned by the eigenfunctions of $\X$ in $L^{2}(\mu)$. 
As a special case of Theorem~\ref{ppq}, we have:

\begin{lemma}\label{k} For an ergodic $\Z^{d}$-system  $\X$, we have that $\mathcal{K}(\X)=Z_{\mathbb{Z}^{d},\mathbb{Z}^{d}}(\X)$.
\end{lemma}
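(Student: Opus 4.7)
The plan is to prove both inclusions separately, both routed through Theorem~\ref{ppq} applied with $k=2$.

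For the inclusion $Z_{\mathbb{Z}^{d},\mathbb{Z}^{d}}(\X)\subseteq \mathcal{K}(\X)$, I would apply Theorem~\ref{ppq} to obtain that $Z_{\mathbb{Z}^{d},\mathbb{Z}^{d}}(\X)$ is an inverse limit of $1$-step $\mathbb{Z}^{d}$-nilsystems. Since a $1$-step nilpotent Lie group is abelian, each such nilsystem is a translation by a homomorphic image of $\mathbb{Z}^{d}$ on a compact abelian Lie group $N/\Gamma$. By Pontryagin duality (equivalently Peter--Weyl for compact abelian groups), the $L^{2}$-space of such a group rotation decomposes as the closed linear span of its characters, and each character (pulled back by the factor map) is an eigenfunction of $\X$. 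Passing to the inverse limit yields that every $Z_{\mathbb{Z}^{d},\mathbb{Z}^{d}}(\X)$-measurable function lies in the closed linear span of eigenfunctions of $\X$, hence belongs to $L^{2}(\mathcal{K}(\X))$.

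For the reverse inclusion $\mathcal{K}(\X)\subseteq Z_{\mathbb{Z}^{d},\mathbb{Z}^{d}}(\X)$, it suffices to show every eigenfunction $f$ is $Z_{\mathbb{Z}^{d},\mathbb{Z}^{d}}(\X)$-measurable. Decomposing $f=\mathbb{E}(f\mid Z_{\mathbb{Z}^{d},\mathbb{Z}^{d}})+h$ with $h\perp Z_{\mathbb{Z}^{d},\mathbb{Z}^{d}}$, the $T_{g}$-invariance of the factor forces $h$ to be either zero or again an eigenfunction with the same eigenvalue. I would rule out the latter by showing any non-zero eigenfunction satisfies $\|h\|_{\mathbb{Z}^{d},\mathbb{Z}^{d}}>0$. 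Normalizing $|h|\equiv 1$ (valid by ergodicity, since $|h|^{2}$ is $T_{g}$-invariant), two iterated applications of Lemma~\ref{replacement0}(iii) express $\|h\|^{4}_{\mathbb{Z}^{d},\mathbb{Z}^{d}}$ as an average of integrals of the form $\int h\cdot T_{g}h\cdot T_{h'}h\cdot T_{g+h'}h\,d\mu$; substituting $T_{g}h=\lambda_{g}h$ and using the standard cube-conjugation convention reduces this to a positive multiple of $\int |h|^{4}\,d\mu=1$.

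The main obstacle is the second direction, specifically the bookkeeping of conjugations in the seminorm computation for complex-valued eigenfunctions (the formulas in Lemma~\ref{replacement0}(iii) are stated without explicit conjugation, so one has to interpret them in the complex setting). A cleaner alternative that avoids this subtlety is to invoke a universal property of $Z_{\mathbb{Z}^{d},\mathbb{Z}^{d}}(\X)$: its construction as the inverse limit in Theorem~\ref{ppq} makes it maximal among $1$-step nilfactors, so since the cyclic factor generated by a single eigenfunction is itself a circle rotation (a $1$-step nilsystem), it must embed in $Z_{\mathbb{Z}^{d},\mathbb{Z}^{d}}(\X)$, thereby rendering $f$ measurable with respect to this factor.
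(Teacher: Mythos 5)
Your proof is correct, and it is more detailed than what the paper offers: the paper simply asserts the lemma as ``a special case of Theorem~\ref{ppq}'' together with a citation to \cite{HK}, without spelling out either inclusion. Your first direction (structure theorem plus Pontryagin/Peter--Weyl) is exactly the content behind that citation, and your second direction (the seminorm computation for an eigenfunction) is a genuine argument the paper leaves implicit. A few remarks.

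On the conjugation issue you flag: you are right that the formulas in Lemma~\ref{replacement0}(iii) are written without conjugations, so taken at face value they only make sense for real-valued $f$. With the standard Host--Kra convention (conjugate on the coordinates of odd Hamming weight of the cube), the computation for a normalized eigenfunction $h$ with $T_{g}h=\lambda_{g}h$, $|h|\equiv 1$ gives
\[
\Vert h\Vert^{4}_{\mathbb{Z}^{d},\mathbb{Z}^{d}}
=\mathbb{E}_{g,g'\in\mathbb{Z}^{d}}\int_{X} h\cdot\overline{T_{g}h}\cdot\overline{T_{g'}h}\cdot T_{g+g'}h\,d\mu
=\mathbb{E}_{g,g'\in\mathbb{Z}^{d}}\,\overline{\lambda_{g}}\,\overline{\lambda_{g'}}\,\lambda_{g+g'}\int_{X}|h|^{4}\,d\mu=1,
\]
so the projection $h$ of an eigenfunction onto the orthogonal complement of $Z_{\mathbb{Z}^{d},\mathbb{Z}^{d}}$ must vanish. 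Your decomposition $f=\mathbb{E}(f\mid Z_{\mathbb{Z}^{d},\mathbb{Z}^{d}})+h$ and the observation that $T_{g}$ commutes with the conditional expectation (so a nonzero $h$ would again be a unimodular eigenfunction by ergodicity) is sound, and the defining property of the factor ($\mathbb{E}(h\mid Z_{\mathbb{Z}^{d},\mathbb{Z}^{d}})=0$ iff $\Vert h\Vert_{\mathbb{Z}^{d},\mathbb{Z}^{d}}=0$) closes the contradiction. Two small points worth keeping in mind: you need $h$ nonconstant to call it an eigenfunction in the paper's sense, but this is automatic since a nonzero constant is never orthogonal to $Z_{\mathbb{Z}^{d},\mathbb{Z}^{d}}$; and the unimodularity $|h|\equiv\text{const}$ does indeed follow from ergodicity because $|h|^{2}$ is invariant under the full $\mathbb{Z}^{d}$-action.

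Your ``cleaner alternative'' is weaker than it appears: Theorem~\ref{ppq} as stated in the paper says only that $Z_{(\mathbb{Z}^{d})^{\times 2}}$ \emph{is} an inverse limit of $1$-step nilsystems; it does not assert that it is the \emph{maximal} factor of this type. That maximality is true and is part of the fuller Host--Kra structure theory, but you would need to cite it (or prove it) separately, so the seminorm computation is in fact the more self-contained route given what the paper makes available.
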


An application of the Kronecker factor is to characterize single averages along polynomials.

\begin{proposition}\label{poly} 
	Let $L\in\mathbb{N}^\ast,$ $p\colon\mathbb{Z}^{L}\to\mathbb{Z}$ be a non-constant polynomial,
	\footnote{ We caution the reader that this result is only true for $p\colon\mathbb{Z}^{L}\to\mathbb{Z}^{d}$ with $d=1$. Indeed, for $d=2$, by taking $p\colon\mathbb{Z}\to\mathbb{Z}^{2},$ with $p(n)=(n,-n)$, $T_{(1,0)}=T_{(0,1)}=T$ for some $T$, and $f$ which is not constant to $0$ with $\mathbb{E}(f\vert Z_{\mathbb{Z}^{2},\mathbb{Z}^{2}}(\X))=0$, we have that $\mathbb{E}_{n\in\mathbb{Z}}T_{p(n)}f(x)=f(x)\not\equiv 0$.}
	$\X=(X,\mathcal{B},\mu, (T_{g})_{g\in\mathbb{Z}})$ be a $\mathbb{Z}$-system, and $f\in L^\infty(\mu)$. If $\mathbb{E}(f\vert Z_{\mathbb{Z},\mathbb{Z}}(\X))=0$, then 
	\[\mathbb{E}_{n\in\mathbb{Z}^{L}}T_{p(n)}f=0.\]
\end{proposition}

 Proposition \ref{poly} was proved implicitly in \cite[Section 2]{B0};
we also provide an alternative proof of it in Section~\ref{s:3} using the language of this paper.

\subsection{Concatenation theorem}
An essential ingredient in our approach is the following concatenation theorem established by Tao and Ziegler (in \cite{TZ}), which studies the properties of intersections of different characteristic factors.

\begin{theorem}[Concatenation theorem, \mbox{\cite[Theorem~1.15]{TZ}}]\label{ct0}
	Let  $\X$ be a $\Z^{d}$-system, $k,k'\in\mathbb{N}^\ast$ and $H_{1},\dots,H_{k},H'_{1},\dots,H'_{k'}$ subgroups of $\mathbb{Z}^d$.  Then
	\[Z_{H_{1},\dots,H_{k}}\cap Z_{H'_{1},\dots,H'_{k'}}\subseteq Z_{(H_{i}+H'_{i'})_{1\leq i\leq k, 1\leq i'\leq k'}}.\]
\end{theorem}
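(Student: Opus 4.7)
The plan is to recognize this as the concatenation theorem of Tao--Ziegler (cited here as \cite[Theorem~1.15]{TZ}), so my ``proof'' would essentially consist of pointing to that reference. Nonetheless, let me sketch the architecture of a proof, as I would try to reconstruct it.

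First I would make two reductions. Using Lemma~\ref{replacement0}(v) (monotonicity of the factors under subgroup containment) together with (i) (permutation invariance), I can pad the shorter of the two tuples with copies of the trivial subgroup $\{0\}$ so that $k=k'$; this does not enlarge either side of the desired inclusion. Second, by expanding the seminorms via Lemma~\ref{replacement0}(iii), the inclusion $Z_{H_{1},\dots,H_{k}}\cap Z_{H'_{1},\dots,H'_{k}} \subseteq Z_{(H_{i}+H'_{i'})_{i,i'}}$ is equivalent to the seminorm inequality that if $\|f\|_{H_{1},\dots,H_{k}}=\|f\|_{H'_{1},\dots,H'_{k}}=0$ (after taking conditional expectation) then $\|f\|_{(H_{i}+H'_{i'})_{i,i'}}=0$.

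Then I would proceed by induction on $k$. For $k=1$ the statement reduces to the elementary identity $\mathcal{I}(H_{1})\cap \mathcal{I}(H'_{1}) = \mathcal{I}(H_{1}+H'_{1})$, since both sides are the $\sigma$-algebra of functions fixed by every element of $H_{1}\cup H'_{1}$, hence by the subgroup they generate. For the inductive step, the main lever is the recursion
\[
\|f\|^{2^{k}}_{H_{1},\dots,H_{k}} = \mathbb{E}_{g\in H_{k}}\|f\cdot T_{g}f\|^{2^{k-1}}_{H_{1},\dots,H_{k-1}},
\]
and its analogue on the primed side. A function $f$ belonging to both $Z_{H_{1},\dots,H_{k}}$ and $Z_{H'_{1},\dots,H'_{k}}$ therefore has the property that $f\cdot T_{g}f$ is controlled in $\|\cdot\|_{H_{1},\dots,H_{k-1}}$ for a.e.\ $g\in H_{k}$, and in $\|\cdot\|_{H'_{1},\dots,H'_{k}}$ in the $g$-variable for each fixed averaging. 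The inductive hypothesis, applied to $f\cdot T_{g}f$, would then give control in the concatenated seminorm $\|\cdot\|_{(H_{i}+H'_{i'})_{1\leq i\leq k-1, 1\leq i'\leq k}}$.

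The hard part, which is the real content of Tao--Ziegler's argument, is to promote this ``pointwise in $g$'' control into joint control where $g$ itself ranges over $H_{k}+H'_{k}$. My plan for this step would be a Cauchy--Schwarz/dual-function manoeuvre: introduce a dual function associated with the inductive seminorm, apply Cauchy--Schwarz in the $g$-variable twice to symmetrise between the $H_{k}$ and $H'_{k}$ directions, and then unfold using the recursion in the reverse direction to obtain an upper bound by $\|f\|_{(H_{i}+H'_{i'})_{1\leq i,i'\leq k}}$. This is precisely the step I expect to be the main obstacle, because one must carefully book-keep which indices are being averaged and ensure that the Cauchy--Schwarz in $H_{k}$ does not destroy the already-established concatenation in indices $i<k$. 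In the finitary Gowers-norm setting, Tao--Ziegler accomplish this via a delicate ``local-to-global'' inequality for box norms; translating their argument to the present measure-preserving setting is exactly the content of their Theorem~1.15, and beyond giving this sketch I would not attempt to reprove it.
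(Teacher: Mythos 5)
The paper does not prove this statement at all; it is quoted verbatim as Theorem~1.15 of Tao--Ziegler, and your proposal is likewise a pointer to that reference, so the two approaches coincide. One caution about the sketch you append: the padding reduction is not sound. Appending $\{0\}$ to a tuple does change the factor. From Lemma~\ref{replacement0}(iii) one has $\|f\|_{H_1,\dots,H_k,\{0\}}^{2^{k+1}}=\|f^2\|_{H_1,\dots,H_k}^{2^k}$, and since $f^2\ge 0$ this vanishes only when $f=0$ a.e.\ (for instance $\|f^2\|_{H_1}>0$ whenever $f\ne 0$, and then Lemma~\ref{replacement0}(vi) propagates this up), so $Z_{H_1,\dots,H_k,\{0\}}$ is the entire $\sigma$-algebra; the concatenated right-hand side likewise acquires spurious subgroups $\{0\}+H'_{i'}=H'_{i'}$, so neither side is preserved. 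Fortunately the cited theorem already covers $k\ne k'$, so the reduction is unnecessary. The remainder of your outline (the $k=1$ base case $\mathcal{I}(H_1)\cap\mathcal{I}(H'_1)=\mathcal{I}(H_1+H'_1)$, the recursion from Lemma~\ref{replacement0}(iii), and the acknowledged Cauchy--Schwarz/local-to-global gap) is a fair caricature of the Tao--Ziegler strategy, which, as you say, you are right not to reprove here.
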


As an immediate corollary, we have: 
\begin{corollary}\label{ct}
	Let $\X$ be a $\Z^{d}$-system, $s,d_{1},\dots,d_{s}\in\mathbb{N}^\ast$ and $H_{i,j}, 1\leq i\leq s, 1\leq j\leq d_{i},$ be subgroups of $\Z^{d}$. Then
	\[ \bigcap_{i=1}^{s} Z_{H_{i,1},H_{i,2},\dots,H_{i,d_{i}}}\subseteq Z_{(H_{1,n_{1}}+H_{2,n_{2}}+\dots+H_{s,n_{s}})_{1\leq n_{i}\leq d_{i}, 1\leq i \leq s}} .\] 
\end{corollary}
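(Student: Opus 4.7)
The plan is a straightforward induction on $s$, with Theorem~\ref{ct0} serving as the only engine. The base case $s=1$ is vacuous, and the case $s=2$ is literally the statement of Theorem~\ref{ct0} after relabelling: take $k=d_{1}$, $k'=d_{2}$, and identify $H_{j}$ with $H_{1,j}$, $H'_{j'}$ with $H_{2,j'}$; the resulting tuple of sums $(H_{1,n_{1}}+H_{2,n_{2}})_{1\leq n_{1}\leq d_{1},\,1\leq n_{2}\leq d_{2}}$ is exactly what is desired.

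For the inductive step, assume the statement holds for $s-1$, so that
\[
\bigcap_{i=1}^{s-1} Z_{H_{i,1},H_{i,2},\dots,H_{i,d_{i}}}\;\subseteq\; Z_{\bigl(H_{1,n_{1}}+H_{2,n_{2}}+\dots+H_{s-1,n_{s-1}}\bigr)_{1\leq n_{i}\leq d_{i},\,1\leq i\leq s-1}}.
\]
Intersecting both sides with $Z_{H_{s,1},H_{s,2},\dots,H_{s,d_{s}}}$ and then applying Theorem~\ref{ct0} to the pair of factors on the right-hand side (the first one indexed by the $d_{1}d_{2}\cdots d_{s-1}$ tuples of sums, the second by $d_{s}$ subgroups), we obtain an inclusion into
\[
Z_{\bigl((H_{1,n_{1}}+\dots+H_{s-1,n_{s-1}})+H_{s,n_{s}}\bigr)_{1\leq n_{i}\leq d_{i},\,1\leq i\leq s}}
= Z_{\bigl(H_{1,n_{1}}+H_{2,n_{2}}+\dots+H_{s,n_{s}}\bigr)_{1\leq n_{i}\leq d_{i},\,1\leq i\leq s}},
\]
which closes the induction.

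There is essentially no obstacle: the only point requiring any care is to make sure that when one invokes Theorem~\ref{ct0} in the inductive step, the list of subgroups produced by the inductive hypothesis is treated as a single indexed tuple (of length $\prod_{i<s} d_{i}$), so that the ``all cross-sums'' conclusion of Theorem~\ref{ct0} produces exactly the desired list of sums $H_{1,n_{1}}+\dots+H_{s,n_{s}}$ over all $(n_{1},\dots,n_{s})$ with $1\leq n_{i}\leq d_{i}$. No other ingredients from the preceding material are needed.
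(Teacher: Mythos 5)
Your proof is correct and is exactly the argument the paper has in mind when it calls Corollary~\ref{ct} an ``immediate corollary'' of Theorem~\ref{ct0}: the case $s=2$ is the concatenation theorem itself, and the general case follows by a routine induction on $s$, treating the $\prod_{i<s}d_i$ partial sums produced by the inductive hypothesis as a single indexed tuple and intersecting with $Z_{H_{s,1},\dots,H_{s,d_s}}$ via Theorem~\ref{ct0}. No gaps.
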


\subsection{Range of polynomials}\label{Sub:2.5}

In this subsection we state and prove two elementary lemmas regarding the range of polynomials.

\begin{definition*}
	For $\textbf{b}=(b_{1},\dots,b_{L})\in(\mathbb{Q}^{d})^{L}, b_{i}\in\mathbb{Q}^{d}$, we define
	\begin{equation}\label{G}
		G(\textbf{b})\coloneqq \text{span}_{\mathbb{Q}}\{b_{1},\dots,b_{L}\}\cap\mathbb{Z}^{d},
	\end{equation}	
	 and
		\begin{equation}\nonumber
		G'(\textbf{b})\coloneqq \text{span}_{\mathbb{Z}}\{b_{1},\dots,b_{L}\}.
	\end{equation}	
\end{definition*}	

 Note that $G'(\textbf{b})$ is a subgroup of $G(\textbf{b})$ of finite index. ($G(\textbf{b})$ can be seen either as a subgroup or a subspace (over $\mathbb{Z}$) of $\mathbb{Z}^d$; we freely use both.)

\begin{lemma}\label{ag}
	Let $\c\colon(\mathbb{Z}^{L})^{s}\to(\mathbb{Q}^{d})^{L}$ be a polynomial and let $V$ be a subspace of $\mathbb{Z}^{d}$ over $\Z$. Then the set
	\[\{(h_{1},\dots,h_{s})\in(\mathbb{Z}^{L})^{s}\colon G(\c(h_{1},\dots,h_{s}))\subseteq V\}\] 
	is either $(\mathbb{Z}^{L})^{s}$ or of (upper) Banach density $0$.\footnote{ For a set $E\subseteq \Z^d,$ we define its \emph{upper Banach density} (or just \emph{upper density} when there is no confusion) with $d^\ast(E):=\vl \max_{t\in\mathbb{Z}^d}\frac{
|(E-t)\cap  \{1,\ldots, N \}^d|}{N^d}.$ If the limit exists, we say that its value is the \emph{Banach density} (or just \emph{density}) of $E$.}
\end{lemma}

\begin{proof}
	For convenience, denote 
		\[W\coloneqq \{(h_{1},\dots,h_{s})\in(\mathbb{Z}^{L})^{s}\colon G(\c(h_{1},\dots,h_{s}))\subseteq V\},\] 
	where one views $\c$ as the matrix:
	\[\c(h_{1},\dots,h_{s})=\begin{pmatrix}
	c_{1,1}(h_{1},\dots,h_{s}) & \dots & c_{1,L}(h_{1},\dots,h_{s})\\
	\vdots & \vdots & \vdots\\
	c_{d,1}(h_{1},\dots,h_{s}) & \dots & c_{d,L}(h_{1},\dots,h_{s})
	\end{pmatrix}
	\]
	for some polynomials $c_{i,j}\colon(\mathbb{Z}^{L})^{s}\to \mathbb{Q}$, $1\leq i\leq d, 1\leq j\leq L$. 
	
	We start with the case $V=\{{\bf 0}\}$. Let $W_{i,j}$ be the set of $(h_{1},\dots,h_{s})\in(\mathbb{Z}^{L})^{s}$ such that $c_{i,j}(h_{1},\dots,h_{s})=0$. Then $W=\bigcap_{i=1}^{d}\bigcap_{j=1}^{L} W_{i,j}$ and so it suffices to show that either each $W_{i,j}$ is $(\Z^L)^s$ or that some $W_{i,j}$ is of density $0.$ 
	By relabelling the variables, we may assume that $L=1$ (and change $s$ to $Ls$).
	Hence, it suffices to show that for a polynomial $c\colon\mathbb{Z}^{s}\to\mathbb{Z}$, the set 
	\[W=\{(h_{1},\dots,h_{s})\in(\mathbb{Z})^{s}\colon c(h_{1},\dots,h_{s})=0\}\]
	is either $\mathbb{Z}^{s}$ or of density $0.$

	If $s=1$, then either $c\equiv 0$ or $c(x)=0$ has finitely many roots. So $W$ is either $\mathbb{Z}$ or of upper Banach density $0.$
	Suppose now that the conclusion holds for some $s\geq 1$, and assume that
	$c(h_{1},\dots,h_{s+1})=\sum_{i=0}^{K}q_{i}(h_{2},\dots,h_{s+1})h^{i}_{1}$
	for some $K\in\mathbb{N}$  and polynomials $q_{i}\colon\mathbb{Z}^{s}\to\mathbb{Q}$ for all $0\leq i\leq K$. Let \[W'=\{(h_{2},\dots,h_{s+1})\in\mathbb{Z}^{s}\colon q_{i}(h_{2},\dots,h_{s+1})=0, 0\leq i\leq K\}.\]
	By induction hypothesis, either $W'=\mathbb{Z}^{s}$ or
	$W'$ is of upper Banach density $0.$
	If $W'=\mathbb{Z}^{s}$, then $c\equiv 0$ and so $W=\mathbb{Z}^{s+1}$. If  $W'$ is of upper Banach density 0, then $W\subseteq W_{1}\cup W_{2}$, where $W_{1}=\mathbb{Z}\times W'$ and $W_{2}=\{(h_{1},\dots,h_{s+1})\in\mathbb{Z}^{s+1}\colon (h_{2},\dots,h_{s+1})\notin W', c(h_{1},\dots,h_{s+1})=0\}.$ Since $W'$ is of upper Banach density 0, so is $W_{1}$. On the other hand, for any $(h_{2},\dots,h_{s+1})\notin W'$, $c(\cdot,h_{2},\dots,h_{s+1})$ is not constant 0 and so it has at most $K$ roots. This implies that $W_{2}$ is of upper Banach density 0, so $W$ is of density $0$, completing the induction.

	Now assume that $V\neq \{{\bf 0}\}$. Since $V$ is a subspace of $\mathbb{Z}^{d}$ over $\Z$, under a change of coordinates, we may assume that   $V=\{0\}^{\ell}\times \mathbb{Z}^{d-\ell}$ for some $0\leq \ell\leq d$. If $\ell=0$, then $V=\Z^{d}$ and there is nothing to prove.  If $\ell>0$, then by restricting to the first polynomials $c_{i,j}, 1\leq i\leq d, 1\leq j\leq \ell$, we are reduced to the case $V=\{{\bf 0}\}$, finishing the proof.  
\end{proof}

\begin{lemma}\label{ag2}
	Let $\c\colon(\mathbb{Z}^{L})^{s}\to(\mathbb{Q}^{d})^{L}$ be a polynomial given by\footnote{ Recall that for $n=(n_1,\ldots,n_L)\in \mathbb{Z}^L$ and $v=(v_1,\ldots,v_L)\in\mathbb{N}^L,$ $n^v$ denotes the quantity $n_1^{v_1}\dots n_L^{v_L}.$ We also use the convention $0^0=1.$}
	\[\c(h_{1},\dots,h_{s})=\sum_{a_{1},\dots,a_{s}\in\mathbb{N}^{L}}h^{a_{1}}_{1}\dots h^{a_{s}}_{s}\cdot \u(a_{1},\dots,a_{s})  \]
	for some ${\bf u}(a_1,\dots,a_s)\in (\mathbb{Q}^d)^L$ which all but finitely many are equal to $0.$ Then
	\[\text{span}_{\Q}\{G({\bf c}(h_1,\dots,h_s)):\;h_1,\dots,h_s\in\mathbb{Z}^L\}=\text{span}_{\Q}\{G({\bf u}(a_1,\dots,a_s)):\;a_1,\dots,a_s\in\mathbb{N}^L\}.\footnote{ Here, when $H_{i},i\in\N$ are subsets of $\mathbb{Q}^{d}$, we use the notation $\text{span}_{\mathbb{Q}}\{H_{i}\colon i\in\N\}$ to denote the set $\text{span}_{\mathbb{Q}}\{x\in \mathbb{Q}^{d}\colon x\in \cup_{i\in\N} H_{i}\}$.}\]
\end{lemma}

For the reader's convenience we first make the statement clear with an example, with $L=2,$ $s=1$, $d=4,$ and then present the proof. 
Let ${\bf c}\colon \Z^2 \to (\Z^{4})^2 $ be given by 
\[ {\bf c}(h_1,h_2)=\begin{pmatrix} h_1 & 0 \\ -3h_1h_2 & h_1 \\ h_1^2 & -h_2 -2h_2^2\\ 7h_1h_2 & h_1^2 \end{pmatrix}. \]

Denoting $h=(h_1,h_2),$ we have 
{\small \begin{align*}{\bf c}(h_1,h_2)&=h_1 \begin{pmatrix} 1 & 0 \\ 0 & 1 \\ 0 & 0 \\ 0 & 0 \end{pmatrix} + h_2\begin{pmatrix} 0 & 0 \\ 0 & 0 \\ 0 & -1 \\ 0 & 0 \end{pmatrix} +h_1h_2  \begin{pmatrix} 0 & 0 \\ -3 & 0 \\ 0 & 0 \\ 7 & 0 \end{pmatrix}+h_1^2\begin{pmatrix} 0 & 0 \\ 0 & 0 \\ 1 & 0 \\ 0 & 1 \end{pmatrix}   + h_2^2 \begin{pmatrix} 0 & 0 \\ 0 & 0 \\ 0 & -2 \\ 0 & 0 \end{pmatrix} \\
&= h^{(1,0)} \begin{pmatrix} 1 & 0 \\ 0 & 1 \\ 0 & 0 \\ 0 & 0 \end{pmatrix} + h^{(0,1)}\begin{pmatrix} 0 & 0 \\ 0 & 0 \\ 0 & -1 \\ 0 & 0 \end{pmatrix} +h^{(1,1)}  \begin{pmatrix} 0 & 0 \\ -3 & 0 \\ 0 & 0 \\ 7 & 0 \end{pmatrix}+h^{(2,0)}\begin{pmatrix} 0 & 0 \\ 0 & 0 \\ 1 & 0 \\ 0 & 1 \end{pmatrix}   + h^{(0,2)} \begin{pmatrix} 0 & 0 \\ 0 & 0 \\ 0 & -2 \\ 0 & 0 \end{pmatrix} \\ 
&= h^{(1,0)}  {\bf u}(1,0) + h^{(0,1)}{\bf u}(0,1) +h^{(1,1)} {\bf u}(1,1) +h^{(2,0)}{\bf u}(2,0) + h^{(0,2)} {\bf u}(0,2),
 \end{align*} }
where the ${\bf u}(i,j)$ denote the corresponding matrices from the previous step.

\cref{ag2} establishes that the span of the columns of ${\bf c}(h_1,h_2)$ (for all $h_1,h_2\in \mathbb{Z}$) equals to the span of the columns of the $\u(a_1,a_2)$ (for all $a_1,a_2\in \mathbb{N}$). More explicitly, it states that

\[  \text{span}_{\Q} \left \{\begin{pmatrix} h_1 \\ -3h_1h_2 \\ h_1^2 \\7h_1h_2 \end{pmatrix},\begin{pmatrix} 0 \\ h_1 \\ -h_2-2h_2^2 \\h_1^2 \end{pmatrix} : h_1,h_2\in \Z  \right \}   \]
equals to
\[ \text{span}_{\Q} \left \{ \begin{pmatrix} 1 \\ 0 \\ 0 \\0 \end{pmatrix}, \begin{pmatrix} 0 \\ 1 \\ 0 \\0 \end{pmatrix}, \begin{pmatrix} 0 \\ 0 \\ -1 \\0 \end{pmatrix}, \begin{pmatrix} 0 \\ -3 \\ 0 \\7 \end{pmatrix}, \begin{pmatrix} 0 \\ 0 \\ 1 \\0 \end{pmatrix} , \begin{pmatrix} 0 \\ 0 \\ 0 \\1 \end{pmatrix}, \begin{pmatrix} 0 \\ 0 \\ -2 \\0 \end{pmatrix} \right\}.   \]

\begin{proof}[Proof of \cref{ag2}]
	We first assume that $L=1$. In this case, we have that 
	\[c(h_{1},\dots,h_{s})=\sum_{a_{1},\dots,a_{s}\in\mathbb{N}}h^{a_{1}}_{1}\dots h^{a_{s}}_{s}\cdot u(a_{1},\dots,a_{s})\]
	for $h_{1},\dots,h_{s}\in\Z$ and some $u(a_{1},\dots,a_{s})\in\mathbb{Q}^{d}$. It suffices to show that
	\[\text{span}_{\mathbb{Q}}\{c(h_{1},\dots,h_{s})\colon h_{1},\dots,h_{s}\in\mathbb{Z}\}=\text{span}_{\mathbb{Q}}\{u(a_{1},\dots,a_{s})\colon a_{1},\dots,a_{s}\in\mathbb{N}\}.\]
	Since $c(h_1,\ldots,h_s)$ belongs to the $\Q$-span of $\{u(a_1,\ldots,a_s)\}_{a_1,\ldots,a_s \in \N},$ the inclusion ``$\subseteq$'' is straightforward. We will show the other inclusion.	
	When $s=1,$ we have that $c(h_{1})=\sum_{i=0}^{K}h^{i}_{1}u(i)$ for some $K\in\mathbb{N}$.
	Since the matrix $(j^i)_{0\leq i,j\leq K},$\footnote{ Recall that we have set $0^0\coloneqq 1.$} is (the transpose of) a Vandermonde matrix, its determinant is non-zero, so each $u(i)$ is a  linear combination of $c(0),\dots, c(K)$. Therefore,
	the conclusion holds for $s=1$.
	
	We now assume that the conclusion holds for some $s\geq 1$ and we prove it for $s+1$. Write \[c(h_{1},\dots,h_{s+1})=\sum_{a_{1},\dots,a_{s+1}\in\mathbb{N}}h^{a_{1}}_{1}\dots h^{a_{s+1}}_{s+1}\cdot u(a_{1},\dots,a_{s+1})=\sum_{i\in\mathbb{N}} h^{i}_{s+1} v_{i}(h_{1},\dots,h_{s})\]
	for some polynomials $v_{i}\colon\mathbb{Z}^{s}\to\mathbb{Q}^{d}$ given by
	$$v_{i}(h_{1},\dots,h_{s})=\sum_{a_{1},\dots,a_{s}\in\mathbb{N}}h^{a_{1}}_{1}\dots h^{a_{s}}_{s}\cdot u(a_{1},\dots,a_{s},i).$$
	Since the conclusion holds for $s=1,$ we have that for all $h_{1},\dots,h_{s}\in\mathbb{Z}$ and $i\in\mathbb{N}$, $v_{i}(h_{1},\dots,h_{s})\in \text{span}_\Q\{c(h_1,\ldots,h_s,h_{s+1}):\;h_{s+1}\in\mathbb{Z}\}$.  Applying the induction hypothesis for $s$, we have that 
	\[u(a_{1},\dots,a_{s},i)\in \text{span}_{\mathbb{Q}}\{v_{i}(h_{1},\dots,h_{s})\colon h_{1},\dots,h_{s}\in\mathbb{Z}\}\]
	for all $a_{1},\dots,a_{s},i\in\mathbb{N},$ hence the conclusion holds for $s+1$. By induction, the $L=1$ case is complete.

	For the general case,
	suppose that $\c(h_{1},\dots,h_{s})=(c_{1}(h_{1},\dots,h_{s}),\dots,c_{L}(h_{1},\dots,h_{s}))$ and $\u(a_{1},\dots,a_{s})$ $=(u_{1}(a_{1},\dots,a_{s}),\dots,u_{L}(a_{1},\dots,a_{s}))$, where $c_{i}\colon(\mathbb{Z}^{L})^{s}\to \mathbb{Q}^{d}$,  $u_{i}\colon(\mathbb{N}^{L})^{s}\to\mathbb{Q}^{d}$, $1\leq i\leq L$. Then
	\begin{equation}\label{23}
	\begin{split}
	c_{i}(h_{1},\dots,h_{s})=\sum_{a_{1},\dots,a_{s}\in\mathbb{N}^{L}}h^{a_{1}}_{1}\dots h^{a_{s}}_{s}\cdot u_{i}(a_{1},\dots,a_{s})
	\end{split}	
	\end{equation}
	for all $1\leq i\leq L$. By definition, one easily checks that
	\[\text{span}_{\mathbb{Q}}\{G(\c(h_{1},\dots,h_{s}))\colon h_{1},\dots,h_{s}\in\mathbb{Z}^{L}\}=\text{span}_{\mathbb{Q}}\{c_{i}(h_{1},\dots,h_{s})\colon h_{1},\dots,h_{s}\in\mathbb{Z}^{L}, 1\leq i\leq L\},\] and
	\[\text{span}_{\mathbb{Q}}\{G(\u(a_{1},\dots,a_{s}))\colon a_{1},\dots,a_{s}\in\mathbb{N}^{L}\}=\text{span}_{\mathbb{Q}}\{u_{i}(a_{1},\dots,a_{s})\colon a_{1},\dots,a_{s}\in\mathbb{N}^{L}, 1\leq i\leq L\}.\;\;\;\;\;\;\;\;\;\]
	So, it suffices to show that for every $1\leq i\leq L$,
	\begin{equation}\nonumber
	\begin{split}
	\text{span}_{\mathbb{Q}}\{c_{i}(h_{1},\dots,h_{s})\colon h_{1},\dots,h_{s}\in\mathbb{Z}^{L}\}=\text{span}_{\mathbb{Q}}\{u_{i}(a_{1},\dots,a_{s})\colon a_{1},\dots,a_{s}\in\mathbb{N}^{L}\},\;\;\text{or}
	\end{split}	
	\end{equation}	
	\begin{equation}\label{24}
	\begin{split}
	\text{span}_{\mathbb{Q}}\{c_{i}(h)\colon h\in\mathbb{Z}^{Ls}\}=\text{span}_{\mathbb{Q}}\{u_{i}(a)\colon a\in\mathbb{N}^{Ls}\},\;\;\;\;\;\;
	\end{split}	
	\end{equation}
	by viewing $(h_{1},\dots,h_{s})$ and $(a_{1},\dots,a_{s})$ as the $Ls$-dimensional vectors $h$ and $a$. Rewriting (\ref{23}) as 
	\begin{equation}\nonumber
	\begin{split}
	c_{i}(h)=\sum_{a\in\mathbb{N}^{Ls}}h^{a}\cdot u_{i}(a),
	\end{split}	
	\end{equation}
	we can apply the conclusion of the case $L'=1$, $s'=Ls$, $d'=d$ and $c_{i}\colon (\Z^{L'})^{s'}=(\Z^{L})^{s}\to (\mathbb{Z}^{d'})^{L'}=\Z^{d}$ to get (\ref{24}). This finishes the proof.
\end{proof}

\section{Equivalent conditions for $((T_{1}\times\dots\times T_{d})^{p(n)})_{n\in\mathbb{Z}^{L}}$ being ergodic} \label{s233}

In this short section, we provide  equivalent conditions to Property (ii) in \cref{t1}, \emph{i.e.}, we characterize when $((T_{1}\times\dots\times T_{d})^{p(n)})_{n\in\mathbb{Z}^{L}}$ is ergodic for $\mu^{\otimes d}$.

\medskip

The following lemma is an implication of \cite[Lemma~4.18]{Fu}.

\begin{lemma}\label{prod}
	Let $\X_{i}=(X_{i},\mathcal{B}_{i},\mu_{i},T_{i}),$ $1\leq i\leq d$ be $\mathbb{Z}$-systems. The set of eigenvalues of $T_{1}\times \dots\times T_{d}$ consists of all numbers of the form $\prod_{i=1}^{d}\lambda_{i}$, where $\lambda_{i}$ is either $1$ or an eigenvalue of $T_{i}$, where at least one $\lambda_{i}$ is an eigenvalue.
\end{lemma}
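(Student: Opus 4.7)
The plan is to derive the lemma from the $d=2$ case, which is the content of the cited Furstenberg lemma, by induction on the number of factors. The ``$\supseteq$'' (sufficiency) direction is immediate: given $\lambda_i$ each equal to $1$ or an eigenvalue of $T_i$ with at least one a genuine eigenvalue, choose $f_i\in L^\infty(\mu_i)$ to be the constant $1$ (if $\lambda_i=1$ is used without being an eigenvalue) or an $L^\infty$ eigenfunction of $T_i$ at $\lambda_i$ otherwise; then $F(x_1,\dots,x_d)=f_1(x_1)\cdots f_d(x_d)$ is not $\mu^{\otimes d}$-a.e. constant (since some $f_i$ is non-constant) and satisfies $(T_1\times\cdots\times T_d)F=\bigl(\prod_i\lambda_i\bigr)F$, witnessing $\prod_i\lambda_i$ as an eigenvalue.

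For the ``$\subseteq$'' direction, the base case $d=2$ is the cited lemma; its proof spectrally decomposes $L^2(\mu_1)=H_d(T_1)\oplus H_c(T_1)$ into the discrete and continuous parts of $T_1$. The component of a non-constant eigenfunction $F$ of $T_1\times T_2$ lying in $H_c(T_1)\otimes L^2(\mu_2)$ vanishes, because the spectral measure of $T_1\otimes T_2$ on that subspace is a convolution of a continuous measure with a probability measure, hence atom-free. Thus $F\in H_d(T_1)\otimes L^2(\mu_2)=\bigoplus_{\lambda_1}E_{\lambda_1}(T_1)\otimes L^2(\mu_2)$, and selecting a nonzero summand, on which $T_1\otimes T_2$ acts as $\lambda_1\cdot(I\otimes T_2)$, yields a nonzero $\psi\in L^2(\mu_2)$ with $T_2\psi=\lambda\lambda_1^{-1}\psi$, so $\lambda\lambda_1^{-1}$ is $1$ or an eigenvalue of $T_2$, giving $\lambda=\lambda_1\lambda_2$ of the prescribed form.

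The inductive step proceeds by grouping: view $T_1\times\cdots\times T_d$ as $(T_1\times\cdots\times T_{d-1})\times T_d$ and apply the $d=2$ case to write any eigenvalue $\lambda$ as $\mu\cdot\lambda_d$, where $\mu$ is $1$ or an eigenvalue of $T_1\times\cdots\times T_{d-1}$ and $\lambda_d$ is $1$ or an eigenvalue of $T_d$, with at least one a genuine eigenvalue. If $\mu$ is a genuine eigenvalue, the induction hypothesis factors $\mu=\lambda_1\cdots\lambda_{d-1}$ in the desired form, and the concatenation with $\lambda_d$ finishes the job; otherwise $\mu=1$ and we may take $\lambda_1=\cdots=\lambda_{d-1}=1$, in which case $\lambda_d$ is the genuine eigenvalue. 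I expect no real obstacle here: the argument is standard spectral theory, and the only mild bookkeeping needed is checking that the ``at least one genuine eigenvalue'' condition is preserved through the case split of the inductive step, which the two cases above handle automatically.
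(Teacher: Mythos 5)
Your proof is correct and takes essentially the same route as the paper: both directions rest on the eigenfunction structure of product systems (Furstenberg's Lemma 4.18), with the ``only if'' direction coming down to finding a non-constant tensor factor inside a non-constant eigenfunction. You make the induction from the two-factor base case to $d$ factors explicit (the paper applies the tensor decomposition directly for $d$ factors, absorbing this step) and supplement the citation with a sketch of the spectral-measure argument behind the base case --- a reasonable elaboration, but not a different method.
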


\begin{proof}
	Suppose first that $\lambda_{i}$ is either 1 or an eigenvalue of $T_{i}$ and that at least one $\lambda_{i}$ is an eigenvalue. Then, for all $1\leq i\leq d$, $T_{i}f_{i}=\lambda_{i}f_{i}$ for some $f_{i}\in L^{\infty}(\mu_{i})$, where not all $f_{i}$'s are $\mu_{i}$-a.e. constant. Then $(T_{1}\times \dots\times T_{d})(f_{1}\otimes\dots\otimes f_{d})=\left( \prod_{i=1}^{d}\lambda_{i}\right)(f_{1}\otimes\dots\otimes f_{d}).$ Since $f_{1}\otimes\dots\otimes f_{d}$ is not $(\mu_{1}\times\dots\times\mu_{d})$-a.e. constant, $\prod_{i=1}^{d}\lambda_{i}$ is an eigenvalue of $T_{1}\times \dots\times T_{d}$.
	
	Conversely, let $\lambda$ be an eigenvalue of $T_{1}\times \dots\times T_{d}$ with a corresponding eigenfunction $f$. 
	By \cite[Lemma~4.18]{Fu}, $f=\sum_{n}c_{n}f_{1,n}\otimes\dots\otimes f_{d,n}$, where $c_{n}\in\mathbb{C}$, $T_{i}f_{i,n}=\lambda_{i,n}f_{i,n}$ for some $\lambda_{i,n}\in\mathbb{S}^{1}$ with $\prod_{i=1}^{d}\lambda_{i,n}=\lambda$. Each $\lambda_{i,n}$ is either 1 or an eigenvalue of $T_{i}$.
	Since $f$ is not $(\mu_{1}\times\dots\times\mu_{d})$-a.e. constant, some $f_{1,n}\otimes\dots\otimes f_{d,n}$ is also not $(\mu_{1}\times\dots\times\mu_{d})$-a.e. constant. For such $n$, at least one of $\lambda_{1,n},\dots, \lambda_{d,n}$ is an eigenvalue of $T_{i}$.
	Note that if $f_{i,n}$ is $\mu_{i}$-a.e. constant, then $\lambda_{i,n}=1$. Otherwise $\lambda_{i,n}$ is an eigenvalue of $T_{i}$, which finishes the proof.
\end{proof}

Let $p\colon\mathbb{Z}^{L}\to\mathbb{Z}$ be a polynomial and $\lambda\in\mathbb{S}^{1}$. We say that $\lambda$ is \emph{uniform} for $p$ if $\mathbb{E}_{n\in\mathbb{Z}^{L}}\lambda^{p(n)}=0$. So, $\lambda=1$ is not uniform for any integer-valued polynomial, while by Weyl's equidistribution theorem, every $\lambda=e^{2\pi i a}$ for some $a\notin\mathbb{Q}$ is uniform for all integer-valued polynomials.

The following proposition, which lists conditions equivalent to Property (ii) of Theorem \ref{t1}, is the main result of the section.

\begin{proposition}[Conditions equivalent to (ii) of Theorem \ref{t1}]\label{234}
	Let $(X,\mathcal{B},\mu,T_{1},\dots,T_{d})$ be a system with commuting transformations and $p\colon\mathbb{Z}^{L}\to\mathbb{Z}$ be a polynomial. The following statements are equivalent:
	\begin{itemize}
		\item[(i)] $((T_{1}\times \dots\times T_{d})^{p(n)})_{n\in\mathbb{Z}^{L}}$ is ergodic for $\mu^{\otimes d}$.
		\item[(ii)]  Every eigenvalue of $T_{1}\times \dots\times T_{d}$ is uniform for $p$.
		\item[(iii)] For every $1\leq i\leq d$, if $\lambda_{i}$ is either 1 or an eigenvalue of $T_{i}$, where at least one $\lambda_{i}$ is an eigenvalue, then
		$\prod_{i=1}^{d}\lambda_{i}$ is uniform for $p$.
	\end{itemize}
\end{proposition}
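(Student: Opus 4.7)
The equivalence (ii) $\Leftrightarrow$ (iii) follows immediately from Lemma \ref{prod}, which explicitly enumerates the eigenvalues of $S := T_{1}\times\cdots\times T_{d}$ as the products $\prod_{i=1}^{d}\lambda_{i}$ over all tuples with each $\lambda_{i}$ equal to $1$ or an eigenvalue of $T_{i}$ and at least one $\lambda_{i}$ an actual eigenvalue of some $T_{i}$. So the substantive content is the equivalence (i) $\Leftrightarrow$ (ii), which I would prove using spectral theory of the Koopman operator for $S$ on $L^{2}(\mu^{\otimes d})$.

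For (i) $\Rightarrow$ (ii), take an eigenvalue $\lambda$ of $S$ with non-a.e.-constant eigenfunction $f$. Since $S^{p(n)}f=\lambda^{p(n)}f$, the average $\mathbb{E}_{n\in I_{N}}S^{p(n)}f$ equals $\bigl(\mathbb{E}_{n\in I_{N}}\lambda^{p(n)}\bigr)f$, and (i) forces this to converge to $\int f\,d\mu^{\otimes d}$ in $L^{2}$ along every F\o lner sequence $(I_{N})$. The case $\lambda=1$ would give $f$ constant, a contradiction, so $\lambda\neq 1$; integrating $Sf=\lambda f$ then yields $\int f\,d\mu^{\otimes d}=0$, and dividing out by $f\neq 0$ in $L^{2}$ gives $\lim_{N}\mathbb{E}_{n\in I_{N}}\lambda^{p(n)}=0$ for every F\o lner sequence, which is precisely uniformity of $\lambda$ for $p$.

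For the converse (ii) $\Rightarrow$ (i), I would invoke the Jacobs--Glicksberg--de Leeuw decomposition $L^{2}(\mu^{\otimes d})=L^{2}_{c}\oplus L^{2}_{wm}$ with respect to $S$, where $L^{2}_{c}$ is the Kronecker factor (closed span of $S$-eigenfunctions together with constants) and $L^{2}_{wm}$ is its orthogonal complement, consisting of functions whose spectral measure relative to $S$ is continuous. Write $f=f_{c}+f_{wm}$ and expand $f_{c}=\int f\,d\mu^{\otimes d}+\sum_{\lambda\neq 1}a_{\lambda}e_{\lambda}$ in an orthonormal eigenbasis. By hypothesis (ii), $\mathbb{E}_{n\in I_{N}}\lambda^{p(n)}\to 0$ for each eigenvalue $\lambda\neq 1$, so dominated convergence in $\ell^{2}$ (using $\sum|a_{\lambda}|^{2}<\infty$) gives $\mathbb{E}_{n\in I_{N}}S^{p(n)}f_{c}\to\int f\,d\mu^{\otimes d}$ in $L^{2}$. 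For $f_{wm}$, the spectral theorem yields
\[
\bigl\|\mathbb{E}_{n\in I_{N}}S^{p(n)}f_{wm}\bigr\|_{2}^{2}=\int_{\mathbb{S}^{1}}\bigl|\mathbb{E}_{n\in I_{N}}\lambda^{p(n)}\bigr|^{2}\,d\sigma_{f_{wm}}(\lambda),
\]
and by Weyl's equidistribution theorem for polynomials, the integrand converges pointwise to $0$ off the countable set of roots of unity; since $\sigma_{f_{wm}}$ is atomless, dominated convergence forces the limit to be $0$. Combining both pieces gives (i).

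The main technical obstacle lies in the $f_{wm}$ analysis: justifying that $\{\lambda\in\mathbb{S}^{1}:\lim_{N}\mathbb{E}_{n\in I_{N}}\lambda^{p(n)}\neq 0\}$ is contained in the roots of unity, uniformly over F\o lner sequences $(I_{N})$ in $\mathbb{Z}^{L}$. This requires a careful application of Weyl's equidistribution for multi-variable polynomial iterates, but it is classical; once in place, the rest of the argument is a standard spectral decomposition.
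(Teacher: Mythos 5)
Your proof is correct, and for (ii) $\Rightarrow$ (i) it takes a genuinely different route from the paper's. The paper invokes Proposition~\ref{poly} to reduce $f$ to its conditional expectation on the Kronecker factor $Z_{\mathbb{Z},\mathbb{Z}}$ of the product system, then approximates by a finite linear combination of eigenfunctions and applies uniformity; you instead split $L^2(\mu^{\otimes d})$ via the Jacobs--Glicksberg--de Leeuw decomposition and kill the continuous-spectrum component $f_{wm}$ directly through the spectral identity
\[
\bigl\|\mathbb{E}_{n\in I_N}S^{p(n)}f_{wm}\bigr\|_2^2=\int_{\mathbb{S}^1}\bigl|\mathbb{E}_{n\in I_N}\lambda^{p(n)}\bigr|^2\,d\sigma_{f_{wm}}(\lambda)
\]
together with Weyl equidistribution along F\o lner sequences. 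Conceptually the two are close: Proposition~\ref{poly} is precisely a characteristic-factor repackaging of the van der Corput/Weyl estimate you apply to $f_{wm}$, so the technical core is the same. Your version is more classical and self-contained (it does not lean on the PET machinery developed later), while the paper's reuses a proposition it needs elsewhere anyway and keeps the whole article in the characteristic-factor idiom.

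Two small points you should make explicit. First, your expansion $f_c=\int f\,d\mu^{\otimes d}+\sum_{\lambda\ne 1}a_\lambda e_\lambda$ tacitly assumes that $S=T_1\times\cdots\times T_d$ is ergodic for $\mu^{\otimes d}$, so that the eigenspace for $\lambda=1$ consists only of constants. This does follow from (ii): if $S$ were not ergodic then $1$ would be an eigenvalue, forced by (ii) to be uniform for $p$, yet $\mathbb{E}_{n\in\mathbb{Z}^L}1^{p(n)}=1\ne 0$. The same observation is what underlies your remark in (i) $\Rightarrow$ (ii) that $\lambda=1$ would make $f$ constant (there one uses that (i) implies ergodicity of $S$). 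Second, the whole argument --- yours and the paper's alike --- requires $p$ to be non-constant: for constant $p$ no $\lambda$ is uniform and the $f_{wm}$ estimate collapses. This is the same implicit hypothesis carried by Proposition~\ref{poly}, but it is worth a sentence.
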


\begin{proof}
	For convenience denote ${\bf Y}=(Y,\mathcal{D},\nu,T)=(X^{d},\mathcal{B}^{\otimes d},\mu^{\otimes d},T_{1}\times \dots\times T_{d})$.
	
	(i) $\Rightarrow$ (ii): Suppose that $\lambda$ is an eigenvalue of $T$. Let $f\in L^{\infty}(\nu)$ be a non-$\nu$-a.e. constant function such that $Tf=\lambda f$. By (i),
	\[0=\mathbb{E}_{n\in\mathbb{Z}^{L}} T^{p(n)}f=\mathbb{E}_{n\in\mathbb{Z}^{L}}\lambda^{p(n)} f.\]
	Since $f$ is not $\nu$-a.e. constant, $\mathbb{E}_{n\in\mathbb{Z}^{L}}\lambda^{p(n)}=0$ and so $\lambda$ is uniform for $p$.
	
	(ii) $\Rightarrow$ (i): It suffices to show that for all $f\in L^{\infty}(\nu)$ with $\int_{Y}f\,d\nu=0$, we have that 
	\[\mathbb{E}_{n\in\mathbb{Z}^{L}}T^{p(n)}f=0.\]
	By \cref{poly}, it follows that
	\[\mathbb{E}_{n\in\mathbb{Z}^{L}}T^{p(n)}f=\mathbb{E}_{n\in\mathbb{Z}^{L}}T^{p(n)}\mathbb{E}(f\vert Z_{T,T}({\bf{Y}})).\]
	By \cref{k}, we can approximate $\mathbb{E}(f\vert Z_{T,T}({\bf{Y}}))$ in $L^{2}(\nu)$ by finite linear combinations of eigenfunctions of $T$. So, we may assume without loss of generality that $\mathbb{E}(f\vert Z_{T,T}({\bf{Y}}))$ itself is an eigenfunction of $T$  and $T\mathbb{E}(f\vert Z_{T,T}({\bf{Y}}))=\lambda\mathbb{E}(f\vert Z_{T,T}({\bf{Y}}))$. 
	Since $\lambda$ is uniform for $p$,
	\[\mathbb{E}_{n\in\mathbb{Z}^{L}}T^{p(n)}\mathbb{E}(f\vert Z_{T,T}({\bf{Y}}))=\mathbb{E}_{n\in\mathbb{Z}^{L}}\lambda^{p(n)}\mathbb{E}(f\vert Z_{T,T}({\bf{Y}}))=0\]
	and we are done.
	
	(ii) $\Leftrightarrow$ (iii): This is a direct corollary of Lemma \ref{prod}. 	
\end{proof}

\section{PET induction}\label{s:3}

This section deals and explains the PET induction scheme, which is one of the main tools that we use in order to study expressions of the form \eqref{i1}, \eqref{i1*} and, more generally, \eqref{i1**}.\footnote{ For us, PET is an abbreviation for ``Polynomial Exhaustion Technique'' (PET also stands for ``Polynomial Ergodic Theorem'').} This technique was introduced by Bergelson (in the now classical \cite{B}) to study multiple averages for essentially distinct polynomials in weakly mixing systems and show the joint ergodicity property in that setting. His method used an inductive argument via  van der Corput lemma, reformulated in his setting, to reduce the ``complexity'' of the family of polynomials.   

Following this pivotal work of Bergelson, variations of the initial PET induction scheme were used to tackle more general cases, as the one in \cite{CFH} to deal with multiple, commuting $T_i$'s and ``nice'' families of polynomials, and in \cite{Jo} to deal with multiple, commuting, $T_i$'s and ``standard'' families of multi-variable polynomials, which we actually follow here too.  

The idea is the following: one runs the van der Corput lemma (vdC-operation) in some family of integer valued functions-sequences satisfying some special property and gets a family also satisfying the special property  but of lower ``complexity''. This allows one to run an inductive argument and arrive at a base case.  In our case the base case is when all the iterates are linear.

Of course, in all the different aforementioned cases, one has to do several technical variations in the method. In this paper for example, an essential detail is that whenever we talk about a polynomial with multiple variables, we always treat the first variable as a special one (see below for more details). Also, to the best of our knowledge, it is the first time that via the vdC-operations, while running (the variation of) the PET induction, we track down the coefficients of the polynomials (see Section \ref{s:pet}),  which is crucial for our arguments. 

\begin{definition*}
	For a polynomial $p(n;h_{1},\dots,h_{s})\colon(\mathbb{Z}^{L})^{s+1}\to\mathbb{Z}$, we denote with $\deg(p)$ \emph{the degree of $p$ with respect to $n$} (for example, for $s=1, L=2$, the degree of $p(n_{1},n_{2};h_{1,1},h_{1,2})=h_{1,1}h_{1,2}n_{1}^{2}+h_{1,1}^{5}n_{2}$ is 2).
	
	For a polynomial $p(n;h_{1},\dots,h_{s})=(p_{1}(n;h_{1},\dots,h_{s}),\dots,p_{d}(n;h_{1},\dots,h_{s}))\colon(\mathbb{Z}^{L})^{s+1}\to\mathbb{Z}^{d},$ we let $\deg(p)=\max_{1\leq i\leq d}\deg(p_{i})$ and we say that $p$ is \emph{essentially constant} if  $p(n;h_{1},\dots,h_{s})$ is independent of the variable $n$. We say that the polynomials $p, q \colon(\mathbb{Z}^{L})^{s+1}\to\mathbb{Z}^{d}$ are \emph{essentially distinct} if $p-q$ is not essentially constant,  and  \emph{essentially equal} otherwise.
	
	Actually, for a tuple $\q=(q_{1},\dots,q_{\ell})$ with polynomials
	$q_{1},\dots,q_{\ell}\colon(\mathbb{Z}^{L})^{s+1}\to\mathbb{Z}^{d}$, we let $\deg(\q)=\max_{1\leq i\leq \ell}\deg(q_{i})$. We say that $\q$ is \emph{non-degenerate} if $q_{1},\dots,q_{\ell}$ are all not essentially constant, and are pairwise  essentially distinct.\footnote{ The separation between using or not bold characters might look confusing in the beginning, it makes it clearer though when we use both vectors and vectors of vectors of polynomials.}
\end{definition*}

Fix a $\mathbb{Z}^{d}$-system $(X,\mathcal{B},\mu,(T_{g})_{g\in\mathbb{Z}^{d}})$.  
Let $q_{1},\dots,q_{\ell}\colon(\mathbb{Z}^{L})^{s+1}\to\mathbb{Z}^{d}$ be polynomials and $g_{1},\dots,g_{\ell}\colon$ $ X\times (\mathbb{Z}^{L})^{s}\to\mathbb{R}$ be functions such that each $g_{m}(\cdot;h_{1},\dots,h_{s})$ is an $L^{\infty}(\mu)$ function bounded by $1$  for all $h_{1},\dots,h_{s}\in\mathbb{Z}, 1\leq m\leq \ell$. For convenience, let  $\q=(q_{1},\dots,q_{\ell})$ and $\g=(g_{1},\dots,g_{\ell})$. We call $A=(L,s,\ell,\g,\q)$
a \emph{PET-tuple}, and for $\kappa\in \N$ we set

\begin{equation}\nonumber
\begin{split}
&S(A,\kappa)\coloneqq \overline{\mathbb{E}}^{\square}_{h_{1},\dots,h_{s}\in\mathbb{Z}^{L}}\sup_{\substack{  (I_{N})_{N\in\mathbb{N}} \\ \text{ F\o lner seq.} }}\varlimsup_{N\to \infty}\Bigl\Vert\mathbb{E}_{n\in I_{N}}\prod_{m=1}^{\ell}T_{q_{m}(n;h_{1},\dots,h_{s})}g_{m}(x;h_{1},\dots,h_{s})\Bigr\Vert^{\kappa}_{L^{2}(\mu)}.
\end{split}
\end{equation}

We define $\deg(A)=\deg(\q)$, and we say that $A$ is \emph{non-degenerate} if $\q$ is non-degenerate.
For any $f\in L^{\infty}(\mu)$, we say that $A=(L,s,\ell,\g,\q)$ is \emph{standard} for $f$ if there exists $1\leq m\leq \ell$ such that $\deg(A)=\deg(q_{m})$ and $g_{m}(x;h_{1},\dots,h_{s})=f(x)$ for every $x,h_1,\ldots,h_s$. That is, $f$ appears as one of the functions in $\g$, only depending on the first variable, and that the polynomial acting on $f$ is of the highest degree.  We say $A=(L,s,\ell,\g,\q)$ is \emph{semi-standard} for $f$ if there exists $1\leq m\leq \ell$ such that $g_{m}(x;h_{1},\dots,h_{s})=f(x)$ for every $x,h_1,\ldots,h_s$, which is similar to being standard, but we do not require the polynomial acting on $f$ to be of the highest degree.

For each PET-tuple $A=(L,s,\ell,\g,\q)$ and polynomial $q\colon(\mathbb{Z}^{L})^{s+1}\to\mathbb{Z}^{d}$, we define the \emph{vdC-operation}, $\partial_{q}A$, according to the following three steps:

\medskip

{\bf Step 1}:  For all $1\leq m\leq \ell$, let $g'_{m}=g'_{m+\ell}=g_{m},$ and  $q_1',\ldots,q_{2\ell}' \colon(\mathbb{Z}^{L})^{s+2}\to\mathbb{Z}^{d}$ be polynomials defined as   $$\displaystyle q'_m(n;h_1,\ldots,h_{s+1})=\left\{ \begin{array}{ll} q_m(n;h_1,\ldots,h_{s})-q(n;h_1,\ldots,h_{s}) & \; ,1\leq m\leq \ell\\ q_{m-\ell}(n+h_{s+1};h_1,\ldots,h_{s})-q(n;h_1,\ldots,h_{s})  & \; ,\ell+1\leq m\leq 2\ell\end{array} \right.,$$ \emph{i.e.}, we subtract the polynomial $q$ from the first $\ell$ polynomials and for the second $\ell$ ones we first shift by $h_{s+1}$ about the first variable and then we subtract $q$.

\medskip

{\bf Step 2}: 
We remove from $q'_{1}(n;h_{1},\dots,h_{s+1}),\dots,q'_{2\ell}(n;h_{1},\dots,h_{s+1})$ the polynomials which are essentially constant and the corresponding terms with those as iterates (this will be justified via the use of the Cauchy-Schwarz inequality and the fact that the functions $g_m$ are bounded), and then put the non-essentially constant ones in groups $J_{i}=\{q''_{i,1},\dots,q''_{i,t_{i}}\},$ $ 1\leq i\leq r$ for some $r,$ $t_{i}\in\mathbb{N}^\ast$ such that two polynomials are essentially distinct if and only if they belong to different groups. We now write $q''_{i,j}(n;h_{1},\dots,h_{s+1})=q''_{i,1}(n;h_{1},\dots,h_{s+1})+p''_{i,j}(h_{1},\dots,h_{s+1})$ for some polynomial $p''_{i,j}$ for all $1\leq j\leq t_{i},$ $1\leq i\leq r$. For convenience, we also relabel $g'_{1},\dots, g'_{2\ell}$ accordingly as $g''_{i,j}$ for all $1\leq j\leq t_{i},$ $1\leq i\leq r$. 

\medskip

{\bf Step 3}: 
For all $1\leq i\leq r$, 
let $q^{\ast}_{i}=q''_{i,1}$ and \[g^{\ast}_{i}(x;h_{1},\dots,h_{s+1})=g''_{i,1}(x;h_{1},\dots,h_{s+1})\prod^{t_{i}}_{j=2}T_{p''_{i,j}(h_{1},\dots,h_{s+1})}g''_{i,j}(x;h_{1},\dots,h_{s+1}).\]
Set $\q^{\ast}=(q^{\ast}_{1},\dots,q^{\ast}_{r})$, $\g^{\ast}=(g^{\ast}_{1},\dots,g^{\ast}_{r})$ and let this new PET-tuple be $\partial_{q}A=(L,s+1,r,\g^{\ast},\q^{\ast})$.
\footnote{ Here we  abuse the notation by writing  $\partial_{q}A$ to denote any of such operations obtained from Step 1 to 3. Strictly speaking, $\partial_{q}A$ is not uniquely defined as the order of grouping of $q'_{1},\dots,q'_{2\ell}$ in Step 2 is ambiguous. However, this is done without loss of generality, since  the order does not affect the value of $S(\partial_{q}A, \cdot)$. } 

In practice, the polynomial $q$ is some of the initial polynomials $q_1,\ldots, q_\ell.$ Therefore, if $q=q_{t}$ for some $1\leq t\leq\ell$, we write $\partial_{t}A$ instead of $\partial_{q_{t}}A$ to lighten the notation.

\medskip

We will use the previous notation and quantifiers for the vdC-operation from now on.

\medskip

The following important proposition informs us that, modulo some power and some constant which are unimportant for our purpose, the value of $S(\cdot, \cdot)$ grows by using the vdC-operation described above.

\begin{proposition}\label{induction}
	Let	$(X,\mathcal{B},\mu,(T_{g})_{g\in\mathbb{Z}^{d}})$ be a $\mathbb{Z}^{d}$-system, $A=(L,s,\ell,\g,\q)$ a PET-tuple, and $q\colon(\mathbb{Z}^{L})^{s+1}\to\mathbb{Z}^{d}$ a polynomial. Then $\partial_{q}A$ is  non-degenerate and $S(A,2\kappa)\leq 4^{\kappa} S(\partial_{q}A,\kappa)$ for every $\kappa \in \N$. 
\end{proposition}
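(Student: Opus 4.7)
The plan is to apply the iterated van der Corput lemma (Lemma~\ref{lemma:iteratedVDC}) directly to the vector-valued sequence
\[ a(n;h_1,\ldots,h_s) \coloneqq \prod_{m=1}^{\ell} T_{q_m(n;h_1,\ldots,h_s)} g_m(\cdot;h_1,\ldots,h_s) \in L^{2}(\mu),\]
which is bounded by $1$ in $L^2(\mu)$ since each $g_m$ is bounded by $1$ in $L^{\infty}(\mu)$ and each $T_g$ preserves $L^\infty$-bounds. The lemma immediately gives
\[ S(A,2\kappa)\leq 4^{\kappa}\, \E^{\square}_{h_1,\ldots,h_{s+1}\in \Z^{L}}\F \vl \left| \mathbb{E}_{n\in I_N}\langle a(n+h_{s+1};h_1,\ldots,h_s),a(n;h_1,\ldots,h_s)\rangle \right|^{\kappa},\]
which already delivers the advertised factor $4^{\kappa}$. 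The rest of the proof is devoted to massaging the right-hand side so that it becomes exactly $S(\partial_{q}A,\kappa)$.

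To that end, I will write the inner product as an integral and apply the transformation $T_{-q(n;h_1,\ldots,h_s)}$ to every factor in the integrand; by measure-preservation this leaves the integral unchanged, and each iterate $q_m(n+h_{s+1};h_1,\ldots,h_s)$ or $q_m(n;h_1,\ldots,h_s)$ gets shifted by $-q(n;h_1,\ldots,h_s)$. This produces exactly the $2\ell$ polynomials $q'_1,\ldots,q'_{2\ell}$ and functions $g'_1,\ldots,g'_{2\ell}$ of Step~1 of the vdC-operation:
\[ \langle a(n+h_{s+1};h_1,\ldots,h_s),a(n;h_1,\ldots,h_s)\rangle = \int_{X} \prod_{m=1}^{2\ell} T_{q'_m(n;h_1,\ldots,h_{s+1})} g'_m(x;h_1,\ldots,h_s)\, d\mu(x).\]
Next I will separate the essentially constant $q'_m$'s (indices $m\in E$): the factor $F\coloneqq \prod_{m\in E} T_{q'_m(h_1,\ldots,h_{s+1})} g'_m$ does not depend on $n$ and satisfies $|F|\leq 1$ pointwise, so Cauchy--Schwarz gives
\[ \left| \mathbb{E}_{n\in I_N} \int_{X} \prod_{m=1}^{2\ell} T_{q'_m(n;h_1,\ldots,h_{s+1})} g'_m\, d\mu \right| \leq \left\| \mathbb{E}_{n\in I_N} \prod_{m\notin E} T_{q'_m(n;h_1,\ldots,h_{s+1})} g'_m \right\|_{L^{2}(\mu)}.\]

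The remaining factors are handled by Steps~2 and 3: the non-essentially-constant $q'_m$'s are partitioned into essential-equivalence classes $J_1,\ldots,J_r$, each class is written as $q''_{i,j}=q''_{i,1}+p''_{i,j}$ with $p''_{i,j}$ independent of $n$, and the translates $T_{p''_{i,j}(h_1,\ldots,h_{s+1})}g''_{i,j}$ are absorbed into a single function $g^{\ast}_{i}(\cdot;h_1,\ldots,h_{s+1})$ that is still bounded by $1$ in $L^\infty(\mu)$. The $i$-th block then collapses to $T_{q^{\ast}_{i}(n;h_1,\ldots,h_{s+1})} g^{\ast}_{i}$, and after raising to the $\kappa$-th power and applying the F\o lner-sup, the $\vl$, and $\E^{\square}_{h_1,\ldots,h_{s+1}}$, the right-hand side becomes precisely $S(\partial_{q}A,\kappa)$, completing the desired inequality.

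The non-degeneracy of $\partial_{q}A$ is automatic from the construction itself: Step~2 throws away every essentially constant polynomial, so each $q^{\ast}_{i}=q''_{i,1}$ genuinely depends on $n$; and since $i\neq j$ correspond to distinct essential-equivalence classes, $q^{\ast}_{i}-q^{\ast}_{j}$ is not essentially constant. The main technical obstacle is the bookkeeping: one must keep track of which $q'_m$'s are pulled out of the $n$-average via Cauchy--Schwarz versus which are recombined into each $g^{\ast}_{i}$, and verify that the bounds on $g^{\ast}_{i}$ survive translation uniformly in $(h_1,\ldots,h_{s+1})$, so that the outer averaging commutes correctly with the inner pointwise estimates.
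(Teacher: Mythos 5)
Your proposal is correct and follows essentially the same route as the paper's proof: apply Lemma~\ref{lemma:iteratedVDC} to get the factor $4^{\kappa}$, rewrite the inner product as an integral and translate by $T_{-q(n;h_1,\ldots,h_s)}$ via invariance of $\mu$ to obtain the Step~1 iterates, then use Cauchy--Schwarz to discard the essentially constant terms and group the rest as in Steps~2 and~3; the non-degeneracy observation is likewise identical.
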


\begin{proof}
	Since in Step 2 of the vdC-operation, essentially constant polynomials are removed and polynomials which are essentially the same are grouped together, we have that $\partial_{q}A$ is non-degenerate.

	On the other hand,
	we have that
	$S(A,2\kappa)$ equals to 
	
	\begin{flalign*}
	& \overline{\mathbb{E}}^{\square}_{h_{1},\dots,h_{s}\in\mathbb{Z}^{L}}\F\varlimsup_{N\to \infty}\Bigl\Vert\mathbb{E}_{n\in I_{N}}\prod_{m=1}^{\ell}T_{q_{m}(n;h_{1},\dots,h_{s})}g_{m}(x;h_{1},\dots,h_{s})\Bigr\Vert^{2\kappa}_{L^{2}(\mu)} & \\
	\leq	&   4^{\kappa} \overline{\mathbb{E}}^{\square}_{h_{1},\dots,h_{s+1}\in\mathbb{Z}^{L}}\F\varlimsup_{N\to \infty} \left\vert\mathbb{E}_{n\in I_{N}}\left \langle \prod_{m=1}^{\ell}T_{q_{m}(n;h_{1},\dots,h_{s})}g_m(x;h_{1},\dots,h_{s}),  \right. \right. \;\;\; \text{(by Lemma~\ref{lemma:iteratedVDC})}\\
	&\left. \left.  \hspace{4.3cm} \prod_{m=1}^{\ell}T_{q_{m}(n+h_{s+1};h_{1},\dots,h_{s})}g_m(x;h_{1},\dots,h_{s}) \right \rangle \right\vert^{\kappa}\\
	=	&   4^{\kappa} \overline{\mathbb{E}}^{\square}_{h_{1},\dots,h_{s+1}\in\mathbb{Z}^{L}}\F  \varlimsup_{N\to \infty}\left\vert \mathbb{E}_{n\in I_{N}}\left \langle \prod_{m=1}^{\ell}T_{{q}'_{m}(n;h_{1},\dots,h_{s},h_{s+1})}g'_m(x;h_{1},\dots,h_{s}),   \right. \right. \; \text{  \stackanchor{(invariance} {of $\mu$)}}\\
	&\left. \left.  \hspace{4.3cm} \prod_{m=1}^{\ell}T_{{q}'_{m+\ell}(n;h_{1},\dots,h_{s},h_{s+1})}g'_m(x;h_{1},\dots,h_{s}) \right \rangle \right\vert^{\kappa}  \;\;\;\;\;\;\;\;\;\;\;\; \text{(end of Step 1)}\\
	\leq &   4^{\kappa} \overline{\mathbb{E}}^{\square}_{h_{1},\dots,h_{s+1}\in\mathbb{Z}^{L}}\F\varlimsup_{N\to \infty}\Bigl\Vert  \mathbb{E}_{n\in I_{N}}\prod_{i=1}^{r}T_{q''_{i,1}(n;h_{1},\dots,h_{s+1})}\big( g''_{i,1}(x;h_{1},\dots,h_{s+1}) \cdot  \\
	&  \hspace{4.3cm} \prod_{j=2}^{t_{i}}T_{p''_{i,j}(h_{1},\dots,h_{s+1})}g''_{i,j}(x;h_{1},\dots,h_{s+1}) \big) \Bigr\Vert_{L^{2}(\mu)}^{\kappa} \;\;\;\;\;\;\;\;\; \text{ \stackanchor{(Cauchy-Schwarz}{and Step 2)} }\\
	=	& 4^{\kappa} \overline{\mathbb{E}}^{\square}_{h_{1},\dots,h_{s+1}\in\mathbb{Z}^{L}}\F\varlimsup_{N\to \infty}\Bigl\Vert\mathbb{E}_{n\in I_{N}}\prod_{i=1}^{r}T_{q^\ast_i(n;h_{1},\dots,h_{s+1})}g^{\ast}_{i}(x;h_{1},\dots,h_{s+1})\Bigr\Vert_{L^{2}(\mu)}^{\kappa} \;\;\;\;\; \text{ (Step 3)}, 
	\end{flalign*}		
	which is $4^{\kappa}S(\partial_{q}A,\kappa),$ completing the proof.\footnote{ Note that the last inequality is exactly the point where the removal of the terms with bounded iterates happens. All these terms are grouped together, while the rest are grouped into sets of non-essentially distinct polynomials according to, and following the notation of, Step 3. By applying the Cauchy-Schwarz inequality to those two terms, as the functions are assumed to be bounded by 1, we get the stated inequality.} 
\end{proof}

	The following theorem shows that when we start with a PET-tuple which is standard for a function, then, after finitely many vdC-operations, we arrive at a new PET-tuple of degree 1 which is still standard for the same function. This is useful because by \cite[Proposition~3.1]{Jo}, whenever we have an average with linear iterates, we can bound the limsup of the norm of the average by some Host-Kra seminorm of the functions. We caution the reader that in our method, we alternate this standard procedure and instead of deriving to linear iterates for ``some functions,'' we run the PET induction multiple times to arrive at linear iterates isolating ``each function'' separately.

\begin{theorem}\label{PET}
	Let	$(X,\mathcal{B},\mu,(T_{g})_{g\in\mathbb{Z}^{d}})$ be a $\mathbb{Z}^{d}$-system and $f\in L^{\infty}(\mu)$. If $A$ is a non-degenerate PET-tuple which is standard for $f$, then there exist $\rho_{1},\dots,\rho_{t}\in\mathbb{N}^\ast,$ for some $t\in \mathbb{N},$ such that $\partial_{\rho_{t}}\dots\partial_{\rho_{1}}A$ is a  non-degenerate PET-tuple which is standard for $f$ with $\deg(\partial_{\rho_{t}}\dots\partial_{\rho_{1}}A)=1$.
\end{theorem}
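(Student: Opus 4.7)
The plan is to prove \cref{PET} by Bergelson's PET induction on a well-founded weight $w(A)$ attached to non-degenerate PET-tuples. The weight is a vector that records, for each positive degree $j$, the number of equivalence classes of the polynomials $q_1,\ldots,q_\ell$ of degree $\geq j$ modulo polynomials of strictly lower degree (in the distinguished variable $n$), compared lexicographically as in \cite{B,CFH,Jo}. The base case $\deg(A)=1$ requires no operation (take $t=0$). For the inductive step, given a non-degenerate PET-tuple $A$ of degree $K:=\deg(A)\geq 2$ standard for $f$, I would construct a single vdC-operation $\partial_\rho A$ that is non-degenerate (automatic from \cref{induction}), still standard for $f$, and of strictly smaller weight. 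Iterating produces the required sequence $\rho_1,\ldots,\rho_t$, and well-foundedness of the lex order yields termination at $\deg=1$.

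Fix the index $m$ from the standardness hypothesis, so $g_m(x;h_1,\ldots,h_s)=f(x)$ and $\deg(q_m)=K$. The rule for choosing $\rho$ is Bergelson's: pick $\rho$ of minimal ``PET type'' in the tuple of polynomials, with the caveat that $\rho$ must be selected so that $q_m$ is preserved as a top-degree polynomial of the new tuple. When $\ell=1$, the choice $\rho=m$ is forced: Step~2 discards $q_m-q_m=0$, and the single surviving polynomial $q_m(n+h_{s+1};h)-q_m(n;h)$ has degree $K-1$, attached to $f$ via $g'_{1+\ell}=g_m$, so the resulting PET-tuple is standard for $f$ of strictly smaller weight. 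When $\ell\geq 2$, take $\rho\neq m$ of minimal degree, with top-degree coefficient in $n$ distinct from that of $q_m$ whenever such a choice exists. After the vdC-operation, $q_m-q_\rho$ (or its shift $q_m(\cdot+h_{s+1};h)-q_\rho$ when leading coefficients happen to cancel) attains the top degree of $\partial_\rho A$ and carries the function $f$.

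Two verifications remain. First, $\partial_\rho A$ is standard for $f$: the top-degree copy of $f$ must lie alone in its group in Step~2. Essential equality with $q_{m'}-q_\rho$ for $m'\neq m$ is ruled out by non-degeneracy of the original tuple (which would force $q_m\equiv q_{m'}$ modulo essentially constant terms), and essential equality with $q_{m'}(\cdot+h_{s+1};h)-q_\rho$ is ruled out by the hypothesis $K\geq 2$, since $q_m(n+h_{s+1};h)-q_m(n;h)$ is then essentially non-constant in $n$, together with degree considerations for $m'\neq m$. Second, the weight strictly decreases: discarding $q'_\rho=0$ in Step~2 and reducing $q_\rho$ to the shifted difference $q_\rho(n+h_{s+1};h)-q_\rho(n;h)$ of strictly lower degree produces the required drop, while the minimality of $\rho$ ensures no new polynomial of degree $>K$ appears.

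The main obstacle I expect is the case $\ell\geq 2$ in which several polynomials share the top degree $K$ and leading coefficients cancel in Step~1, potentially destroying the standard property of $f$ in a single step. This is the classical combinatorial heart of the PET scheme: one may need to iterate the one-step reduction several times, with the refined Bergelson weight strictly decreasing at each step, before the top degree attached to $f$ actually drops. The careful bookkeeping of the weight drop, and the parallel check that $f$ remains in its own group in Step~2 throughout, are precisely what the minimal-type choice of $\rho$ is designed to guarantee, following the arguments of \cite{B} and their multi-transformation/multi-variable variants in \cite{CFH,Jo}. Once these two ingredients are verified, the induction closes and $\partial_{\rho_t}\cdots\partial_{\rho_1}A$ of degree $1$ standard for $f$ is produced after finitely many steps.
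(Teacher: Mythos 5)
Your proposal correctly captures the high-level strategy of the paper's proof — a PET induction on a well-founded weight, with the single-step claim that one can always choose a vdC-index $\rho$ so that $\partial_\rho A$ is non-degenerate, standard for $f$, and of strictly smaller weight. However, there is a genuine gap in the choice of weight and the verification of the one-step claim, and you flag it yourself without resolving it.

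The weight you propose is the classical scalar Bergelson weight $(w_1,\dots,w_D)$ recording equivalence classes by degree in $n$, which works for scalar-valued polynomials ($d=1$). For $\mathbb{Z}^d$-valued polynomials, the paper refines this to a $D\times d$ \emph{matrix} $w(\q)$ indexed by degree \emph{and column}, and further passes to the full tuple $W(\q)=(w(R_0(\q)),\dots,w(R_{\ell-1}(\q)))$ of $k$-reductions $R_k(\q)$, which strip off leading zero columns. This extra structure is precisely what handles the cases your proposal leaves open: when all polynomials of top degree share the same leading coefficient in the first coordinate(s) but differ in later coordinates (the paper's case $j_0>0$, or case (i)(b) with aligned leadings), the column-by-column counts change in a way that the scalar weight cannot witness, and the reductions $R_k$ are what make the decrease visible. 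Your rule ``take $\rho\neq m$ of minimal degree with distinct top coefficient whenever such a choice exists'' is the right instinct for the unambiguous cases, but when no such choice exists one must argue simultaneously that the weight drops and that $f$ remains attached to a polynomial of maximal degree in $\partial_\rho A$; you acknowledge (``The main obstacle I expect is the case $\ell\geq 2$ in which several polynomials share the top degree $K$ and leading coefficients cancel...'') that this is where the argument could fail, and you defer the resolution to the cited references. That deferral does not close the gap: what is needed is exactly the case analysis based on the column structure (the paper's five sub-cases built around $j_0$ and the per-column degree pattern), and none of the cited works proves the statement in this generality. In short, your outline identifies the obstacle correctly but stops short of the technical step that the paper supplies — the construction of the finer matrix/reduction weight and the sub-case analysis showing that weight-decrease and standardness can be achieved by a single vdC-operation for some $\rho$ in every configuration.
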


As an example to demonstrate how the method works, we present some computations for our Example~\ref{Eg:1}.

\medskip

\noindent {\bf First part of computations for Example~\ref{Eg:1}:} 
For a $\mathbb{Z}^{2}$-system $(X,\mathcal{B},\mu,(T_{g})_{g\in\mathbb{Z}^{2}})$ and $f_{1}, f_{2}\in L^{\infty}(\mu),$ the PET-tuple of Example~\ref{Eg:1}  is \[A=(1,0,2,(f_{1},f_{2}),(p_{1}, p_{2})),\] where $p_{1}(n)=(n^{2}+n,0)=(n^{2}+n)e_{1},$ $ p_{2}(n)=(0,n^{2})=n^{2}e_{2},$ for $e_{1}=(1,0)$ and $e_{2}=(0,1).$ 
 For $i=1$ and 2, we explain how to find a sequence of vdC-operations to reduce $A$ into a non-degenerate PET-tuple of degree 1 which is standard for $f_{i}$.

We first isolate the function $f_1.$ Setting  $e=(1,-1),$   we have 
$\partial_{2} A=(1,1,3,(f_{1},f_{1},f_{2}),\p_{1}),$
where the tuple $\p_{1}$ essentially equals  $$(n^{2}e+ne_{1}, n^{2}e+(2h_{1}+1)ne_{1}, 2h_{1}ne_{2})$$ (one term is removed because it is essentially constant and so $\ell=3$). Then
$\partial_{3}\partial_{2} A=(1,2,4, (f_{1},f_{1},$ $f_{1},f_{1}), \p_{2}),$
where the tuple $\p_{2}$ essentially equals

\noindent $((n^{2}+2h_{1}n)e+(1-2h_{1})ne_{1}, (n^{2}+2h_{1}n)e+ne_{1}, (n^{2}+2(h_{1}+h_{2})n)e+(1-2h_{1})ne_{1}, (n^{2}+2(h_{1}+h_{2})n)e+ne_{1})$
(two terms are removed because they are essentially constant and so $\ell=4$). Finally 
$\partial_{2}\partial_{3}\partial_{2} A=(1,3,7, (f_{1},\dots,f_{1}), \p_{3}),$
where the tuple $\p_{3}$ essentially equals  

\noindent $(-2h_{1}ne_{1},2h_{2}ne-2h_{1}e_{1},2h_{2}ne,2h_{3}ne-2h_{1}ne_{1},2h_{3}ne,2(h_{2}+h_{3})ne-2h_{1}ne_{1},2(h_{2}+h_{3})ne)$
(one term is removed because it is essentially constant and so $\ell=7$). We have that $\partial_{2}\partial_{3}\partial_{2} A$ is non-degenerate and standard for $f_{1}$ with $\deg(\partial_{2}\partial_{3}\partial_{2} A)=1$.

\

We continue by isolating $f_2$. Note that 
$\partial_{1} A=(1,1,3,(f_{2},f_{1},f_{2}),\p_{1}),$ 	where the tuple $\p_{1}$ essentially equals $$(-n^{2}e-ne_{1},2h_{1}ne_{1},-n^{2}e-ne_{1}+2h_{1}ne_{2})$$ (one term is removed for it is essentially constant and so $\ell=3$). 
Then
$\partial_{2}\partial_{1} A=(1,2,4, (f_{2},f_{2},f_{2},$ $f_{2}), \p_{2}),$
where the tuple $\p_{2}$ essentially equals 
$$(-n^{2}e-(2h_{1}+1)ne_{1}, -(n^{2}+2h_{1}n)e-ne_{1}, -(n^{2}+2h_{2}n)e-(2h_{1}+1)ne_{1}, -(n^{2}+2(h_{1}+h_{2})n)e-ne_{1})$$
(two terms are removed because they are essentially constant and so $\ell=4$).
Finally 
$\partial_{1}\partial_{2}\partial_{1} A=(1,3,7, (f_{2},\dots,f_{2}), \p_{3}),$
where the tuple $\p_{3}$ essentially equals 

\noindent $(2h_{1}ne_{2},-2h_{2}ne,-2h_{2}ne+2h_{1}ne_{2}, -2h_{3}ne,
-2h_{3}ne+2h_{1}ne_{2},-2(h_{2}+h_{3})ne,-2(h_{2}+h_{3})ne+2h_{1}ne_{2})$
(one term is removed because it is essentially constant and so $\ell=7$). We have that $\partial_{1}\partial_{2}\partial_{1} A$ is non-degenerate and standard for $f_{2}$ with $\deg(\partial_{1}\partial_{2}\partial_{1} A)=1$.

\begin{proof}[Proof of Theorem~\ref{PET}]
	We follow the ideas of the PET induction in \cite{Jo} and \cite{L9}.
	
	If $\deg(A)= 1,$ there is nothing to prove. So, we assume that $\deg(A)\geq 2,$ $A=(L,s,\ell,\g=(g_{1},\dots,g_{\ell}),\q=(q_{1},\dots,q_{\ell}))$, with $q_{i}=(q_{i,1},\dots,q_{i,d}),$ $1\leq i\leq \ell$, where each $q_{i,j}$ is a polynomial from $\mathbb{Z}^{s+1}$ to $\mathbb{Z}$. Recall that $\deg (q_{i})=\max_{1\leq j\leq d}\deg(q_{i,j})$. In this proof, we are thinking of $\q$ as an $\ell\times d$ matrix $(q_{i,j})_{1\leq i\leq \ell, 1\leq j\leq d}$ with polynomial entries.
	
	We say that $p,q\colon(\mathbb{Z}^{L})^{s+1}\to\mathbb{Z}$ are \emph{equivalent}, and we write that $p\sim q$, if $\deg(p)=\deg(q)$ and $\deg(p-q)<\deg(p)$; otherwise, we write $p \nsim q$. It is not hard to see that ``$\sim$'' defines an equivalence relation. Suppose that $\deg(\q)\leq D$. We define the \emph{column weight} of the column $j$ to be the vector $w_{j}(\q)=(w_{1,j}(\q),\dots,w_{D,j}(\q))$, where each $w_{k,j}(\q)$ is equal  to the number of equivalent classes in $\q$ of degree $k$ in the column $j$ (\emph{i.e.}, among $q_{1,j},\dots,q_{\ell,j}$). For two column weights $\v=(v_{1},\dots,v_{D})$ and $\v'=(v'_{1},\dots,v'_{D})$, we say that $\v<\v'$ if there exists $1\leq k\leq D$ such that $v_{k}<v'_{k}$ and $v_{k'}=v'_{k'}$ for all $k'>k$ (notice that we start comparing them from the last coordinate because this is the one associated to the highest degree).  Then, the set of weights and the set of column degrees are well-ordered sets.
	Putting this information about $\q$ in rows, we get the $D\times d$ matrix $w_{\q}=[w_{1}(\q),\dots,w_{d}(\q)]$ which we call the \textit{subweigth} of $\q$.
	
		Given a matrix $M$ (with polynomial entries), we define its \textit{k-reduction}, denoted by $R_k(M),$ to be the submatrix of $M$ obtained by only considering the rows whose first $k$ elements are $0,$ after discarding these $0$'s. 
		For instance, for the matrix \[M=\begin{pmatrix} 0  & p_1 & p_2 & p_3 \\ 
		p_4 & 0 & 0 & p_5 \\ 0 & 0 & p_6 & p_7 \\ 0 & 0 & 0 & p_8\end{pmatrix}\]
		where $p_1,\ldots,p_8$ are non-zero polynomials,  its $i$-reduction for $i=1,2,3,4$ is $\begin{pmatrix} p_1& p_2 & p_3 \\ 0 & p_6 & p_7 \\ 0 & 0 & p_8\end{pmatrix}$, $\begin{pmatrix} p_6 & p_7 \\ 0 & p_8 \end{pmatrix}$ , $\begin{pmatrix} p_8 \end{pmatrix}$ and $\emptyset$ respectively. By convention, the 0-reduction $R_0(M)$ is $M$ itself and the $k$-reduction for $k\geq \ell$ is $\emptyset$.
		
		We now define an order associated to matrices. The \textit{weight} of a  matrix $\q$  with polynomial entries, denoted by $W(\q)$, is the vector of the matrices $(w(R_0(\q)),w(R_1(\q)),\ldots,w(R_{\ell-1}(\q)))$, where $\ell$ is the number of columns of $\q$. 
		Given two polynomial matrices $\q$ and $\q'$, $\deg(\q ),\deg(\q' )\leq D$, we say that $W(\q')<W(\q )$ if there exist $1\leq J, K\leq \ell$ such that 
		\[ w_j(R_k(\q))=w_j(R_k(\q')) \text{ for all } j<J \text{  and all } k=0,\ldots,\ell-1;  \] 
		and 
		\[w_{J}(R_{k}(\q))=w_{J}(R_{k}(\q')) \text{ for all }   k=0,\ldots,K-1 \text{ and } w_{J}(R_{K}(\q))< w_{J}(R_{K}(\q')). \]	Under this order, the set of weights of matrices is well-ordered. 
For a PET-tuple  $A=(L,s,\ell,\g,\q)$, we define $W(A)=W(\q)$ to be the \emph{weight} of $A$.  
	
	\textbf{Claim:} Let	$A$  be a  non-degenerate PET-tuple which is standard for $f$ with $\deg(A)\geq 2$. There exists $1\leq \rho \leq \ell$ such that $\partial_{\rho} A$ is  non-degenerate and standard for $f$ with $W(\partial_{\rho} A)<W(A)$. 
	
	We first finish the proof of the theorem assuming that the claim holds. Let $A$ be a  non-degenerate PET-tuple which is standard for $f$ and $\deg(A)\geq 2$. 
	After using the claim finitely many steps, the decreasing chain $W(A)>W(\partial_{\rho_{1}}A)>W(\partial_{\rho_{2}}\partial_{\rho_{1}}A)>\dots$ will eventually terminate, so we will end up with a non-degenerate PET-tuple $\partial_{\rho_{t}}\dots\partial_{\rho_{1}}A$ which is standard for $f$, with $\deg(\partial_{\rho_{t}}\dots\partial_{\rho_{1}}A)=1$. This finishes the proof. 
	
	So it suffices to prove the claim.
	 Relabeling if necessary, we may assume without loss of generality that $g_{1}=f$ and $\deg(q_{1,1})=\deg(A)\geq 2$. Let $j_{0}\in \{0,\ldots,\ell\}$ be the smallest integer such that $R_{j_0+1}(\q)=\emptyset$. We choose $1\leq \rho\leq \ell$ in the following way:
	\begin{enumerate}
		\item Case that $j_0=0$. This case has three sub-cases. 
		\begin{enumerate} 
			
			\item If some $q_{i,1}\nsim q_{1,1}$, then let $\rho$ be the smallest 
			integer such that $q_{\rho,1}\nsim q_{1,1}$. 
			
			In this case, since  $q_{\rho,1}\nsim q_{1,1}$ and $A$ is standard for $f$, $\partial_{\rho}A$ is standard for $f$. Moreover, $w_{D,1}(\partial_{\rho}A)=w_{D,1}(A)-1$ and so $W(\partial_{\rho} A)<W(A)$. 
			
			\item If all $q_{1,1},\dots,q_{\ell,1}$ are equivalent and there exist $2\leq i\leq \ell,$ $1\leq j\leq d$ such that $q_{i,j}\nsim q_{1,j}$, and either $\deg(q_{i,j})$ or $\deg(q_{1,j})$ equals $\deg(\q)$, then let $\rho$ be the smallest integer such that there exists $1\leq j\leq d$ with $q_{\rho,j}\nsim q_{1,j}$, and either $\deg(q_{\rho,j})$ or $\deg(q_{1,j})$ equals  $\deg(\q)$. In this case, since  $q_{\rho,j}$ is not equivalent to $q_{1,j}$, and either $\deg(q_{\rho,j})$ or $\deg(q_{1,j})$ equals  $\deg(\q)$, $\partial_{\rho}A$ is standard for $f$.  Moreover, $w_{D,1}(\partial_{\rho}A)=0<w_{D,1}(A)$ and so $W(\partial_{\rho} A)<W(A)$. 
			
			\item If all $q_{1,1},\dots,q_{\ell,1}$ are equivalent, and for all $1\leq j\leq d$, either $\deg(q_{i,j})$ is $\deg(q_{1,j})$ for all $1\leq i\leq \ell$ or $\deg(q_{i,j})<\deg(\q)$ for all $1\leq i\leq \ell$, then let $\rho=\ell+1$.\footnote{ We leave it to the interested reader to check that (a), (b) and (c) cover all the possibilities in Case (i).}
			
			In this case, $\deg(\partial_{\rho}A)<\deg(A)$. Since $\deg(q_{1,1})\geq 2$, we have that 
			
			$\deg(q_{1,1}(n,h_{1},\dots,h_{s})-q_{1,1}(n+h_{s+1},h_{1},\dots,h_{s}))=\deg(q_{1,1})-1=\deg(\partial_{\rho}A)\geq 1.$
			
			 So $\partial_{\rho}A$ is standard for $f$. Moreover, $w_{D,1}(\partial_{\rho}A)=0<w_{D,1}(A)$ and so $W(\partial_{\rho} A)<W(A)$. 
	\end{enumerate}
		
		\item Case that $j_0>0$. Consider the reduction $R_{j_0}(\q)$ of the matrix $\q$.
		\begin{enumerate} 
			\item  Suppose that an entry of the first column of $R_{j_0}(\q)$ (which is of course an entry of the $j_0+1$ column of $\q$) is not equivalent to any other entry of the first column of $R_{j_0}(\q)$. Among such entries, let $\rho$ be the smallest index such that $q_{\rho,j_{0}+1}$ has minimal degree. 
			
			In this case, we have that $\partial_{\rho}A$ is standard for $f$. Moreover, \[w_{\deg(q_{\rho,j_{0}+1}),1}(\partial_{\rho}^{j_0}A)>w_{\deg(q_{\rho,j_{0}+1}),j_{0}}(R_{j_0}(\q)),\] where $\partial_{\rho}^{k}=\partial_\rho\ldots \partial_\rho$ ($k$ times). One can check that this implies that $W(\partial_{\rho} A)<W(A)$.
			
			\item Suppose all entries in the first column of $R_{j_0}(\q)$ are equivalent. Then let $\rho$ be such  that $q_{\rho,j_0+1}$ corresponds to the first entry of the first column of $R_{j_0}(\q)$. 
			
			In this case, $\partial_{\rho}A$ is standard for $f$. Moreover, $$w_{\deg(q_{\rho,j_{0}+1}),1}(\partial_{\rho}^{j_0}A )>w_{\deg(q_{\rho,j_{0}+1}),j_{0}}(R_{j_0}(\q)).$$  One can check that this fact implies that $W(\partial_{\rho} A)<W(A)$.
		\end{enumerate} 	
		
	\end{enumerate}

	This proves the claim and completes the proof.
\end{proof}

We now provide a  proof of Proposition \ref{poly}.

\begin{proof}[Proof of Proposition \ref{poly}]
	Let $A=(L,0,1,\{f\},\{p\})$. It suffices to show that $S(A,\kappa)=0$  for some $\kappa \in \N$, assuming that $\mathbb{E}(f\vert Z_{\Z,\Z}(\X))=0$. For any $s\in\mathbb{N}^\ast$ and function $u\colon(\mathbb{Z}^{L})^{s}\to\mathbb{Z}$, let $\Delta u\colon(\mathbb{Z}^{L})^{s+1}\to\mathbb{Z}$ be the function $\Delta u(x_{1},\dots,x_{s+1})=u(x_{1}+x_{s+1},\dots,x_{s})-u(x_{1},\dots,x_{s})$ and $\Delta^{k}u=(\Delta\circ \dots\circ \Delta) u$ ($k$ times).
	
	If $\deg(p)>1,$ then it is easy to verify that $\partial_{1} A=(L,1,1,\{f\},\{\Delta p\})$. By induction, $\partial^{k}_{1} A=(L,k,1,\{f\},\{\Delta^{k} p\})$ for all $k<\deg (p)$. By Proposition \ref{induction}, we have that 
	$S(A,2^{K})\leq 4^{2^K-1} S(\partial_{1}^{K}A,1)$, where $K=\deg(p)-1$. It is easy to see that $\deg(\Delta p)=\deg(p)-1,$\footnote{ Recall that ``$\deg$'' only ``sees'' the first variable.} and so $\deg(\Delta^{K} p)=1$. We may then assume that $\Delta^{K} p(n,h_{1},\dots,h_{K})=c(h_{1},\dots, h_{K})\cdot n+c'(h_{1},\dots,h_{K})$ for some polynomials $c(h_{1},\dots, h_{K})\in \mathbb{Z}^{L}, c'(h_{1},\dots, h_{K})\in\mathbb{Z}$ of $h_1,\ldots,h_K$ with  $c$ not being the constant zero vector.
	By \cref{erg}, 
	\begin{equation}  \label{equation:PolyCharFactor} \E_{n\in \mathbb{Z}^{L}}T_{\Delta^{K} p(n,h_{1},\dots,h_{K})}f=T_{c'(h_1,\ldots,h_K)}\mathbb{E}(f\vert \I(G(c(h_{1},\dots,h_{K})))).\end{equation}
	If $c(h_{1},\dots, h_{K})\neq 0$, then \[ \I(G(c(h_{1},\dots,h_{K})))=Z_{G(c(h_{1},\dots,h_{K}))}\subseteq Z_{\mathbb{Z},G(c(h_{1},\dots,h_{K}))}=Z_{\mathbb{Z},\mathbb{Z}},\footnote{ Note that one cannot conclude that $\I(G(c(h_{1},\dots,h_{K})))=Z_{G(c(h_{1},\dots,h_{K}))}=Z_{\mathbb{Z}}$ because Lemma \ref{replacement0} (iv) is invalid for $d=1$.}\]
	where in the last equality we used Lemma~\ref{replacement0} (iv), since $G(c(h_{1},\dots,h_{K}))$ is a finite index subgroup of $\Z$. By Lemma~\ref{ag}, the set of $(h_{1},\dots,h_{K})\in(\mathbb{Z}^{L})^{K}$ such that $c(h_{1},\dots, h_{K})= 0$ is of upper Banach density $0,$ so
	\begin{align*}
	S(\partial_{1}^{K}A,1)= &\E^{\square}_{h_{1},\dots,h_{K}\in\Z^{L}}\F\vl\Bigl\Vert \mathbb{E}_{n\in I_{N}}T_{\Delta^{K} p(n,h_{1},\dots,h_{K})}f\Bigr\Vert_{L^{2}(\mu)} \\
	=& \E^{\square}_{h_{1},\dots,h_{K}\in\Z^{L}}\F\vl\Bigl\Vert \mathbb{E}_{n\in I_{N}}T_{\Delta^{K} p(n,h_{1},\dots,h_{K})}\mathbb{E}(f\vert Z_{\mathbb{Z},\mathbb{Z}})\Bigr\Vert_{L^{2}(\mu)}=0.
	\end{align*}
	
	This implies that  $S(A,2^{K})=0$, which finishes the proof.
\end{proof}	

\section{Characterizing multiple averages along polynomials}\label{s:4}

In this section we state Theorem~\ref{T:2}, the stronger form of Theorem \ref{T:3}, which is the main contribution of this work. Its validity implies (see below) both Theorems~\ref{t2} and ~\ref{t1}, our main joint ergodicity results.

\subsection{Characteristic factors for multiple averages}
Recall that a family of (integer valued) polynomials  $p_{1},\dots,p_{k}\colon\mathbb{Z}^{L}$ $\to\mathbb{Z}^{d}$ is non-degenerate if $p_{i}, p_{i}-p_{j}$ are not essentially constant for all $1\leq i, j\leq k$, $i\neq j$.  The following theorem states that in order to study multiple averages along polynomials, it suffices to assume that all the functions $f_{i}$ are measurable with respect to certain Host-Kra characteristic factors.

\begin{theorem}[Characteristic factors for multiple averages along polynomials]\label{T:2}
	Let $d,k,L\in\mathbb{N}^\ast$
	and $p_{1},\dots,p_{k}\colon\mathbb{Z}^{L}\to\mathbb{Z}^{d}$ be a non-degenerate family of polynomials of degree at most $K.$ Suppose that $p_{i}(n)=\sum_{v\in\mathbb{N}^{L},\vert v\vert\leq K}b_{i,v}n^{v}$ for some $b_{i,v}\in \mathbb{Q}^{d}$. Let $R\subseteq\mathbb{Q}^{d}$ be the set
	\[R \coloneqq \bigcup_{v\in\mathbb{N}^{L}, 0<\vert v\vert\leq K}\{b_{i,v}, b_{i,v}-b_{i',v}\colon 1\leq i, i'\leq k\}\backslash\{{\bf 0}\}.\]
	Let  $(X,\mathcal{B},\mu, (T_{g})_{g\in\mathbb{Z}^{d}})$ be a $\mathbb{Z}^{d}$-system.
	For every $f_{1},\dots,f_{k}\in L^{\infty}(\mu)$, 
	 we have that
	\begin{equation}\label{12}
		\mathbb{E}_{n\in\mathbb{Z}^{L}} T_{p_{1}(n)}f_1\cdot\ldots\cdot T_{p_{k}(n)}f_k=0
		\text{ if $\mathbb{E}(f_i\vert Z_{\{G(r)^{\times \infty}\}_{r\in R}})=0$ for some $1\leq i\leq k$.}
	\end{equation}
	In particular, if $(T_{g})_{g\in G(r)}$ is ergodic for $\mu$ for all $r\in R$, then for every $f_{1},\dots,f_{k}\in L^{\infty}(\mu)$, 
	\begin{equation}\label{413}
		\mathbb{E}_{n\in\mathbb{Z}^{L}} T_{p_{1}(n)}f_1\cdot\ldots\cdot T_{p_{k}(n)}f_k=0 \text{ if $\mathbb{E}(f_i\vert Z_{(\mathbb{Z}^{d})^{\times \infty}})=0$ for some $1\leq i\leq k$.}
	\end{equation}
\end{theorem}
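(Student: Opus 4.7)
The plan is to prove~\eqref{12} first; the addendum~\eqref{413} then follows at once, since when $(T_g)_{g\in G(r)}$ is ergodic for every $r\in R$, Corollary~\ref{ppp} collapses $Z_{\{G(r)^{\times\infty}\}_{r\in R}}$ to $Z_{(\mathbb{Z}^d)^{\times\infty}}$. To establish~\eqref{12}, fix the index $i$ for which we want to show that vanishing of $\mathbb{E}(f_i\mid Z_{\{G(r)^{\times\infty}\}_{r\in R}})$ forces the multiple average to vanish. Starting from the PET-tuple $A=(L,0,k,(f_1,\ldots,f_k),(p_1,\ldots,p_k))$, I would permute the indices (and, if necessary, run the PET reduction more than once, isolating one function at a time, as the authors hint in the Method and Organization subsection) so that $A$ becomes standard for $f_i$, then apply Theorem~\ref{PET} to obtain a sequence of vdC-operations $\partial_{\rho_t}\cdots\partial_{\rho_1}A$ whose output is a non-degenerate PET-tuple of degree~$1$ still standard for $f_i$. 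Proposition~\ref{induction} then bounds (a power of) the original average by an iterated averaged $L^2$-norm whose inner iterates are linear in $n$.

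When the iterates are linear, the mean-ergodic argument used in the proof of Proposition~\ref{poly}, combined with the linear-iterate seminorm bound of \cite[Proposition~3.1]{Jo}, yields for each fixed choice of outer parameters $h=(h_1,\ldots,h_t)$ a pointwise control by a Host-Kra seminorm of the form
\begin{equation*}
\|f_i\|_{G(c_{i_1}(h)),\,G(c_{i_2}(h)),\,\ldots,\,G(c_{i_J}(h))},
\end{equation*}
where each $c_{i_j}\colon(\mathbb{Z}^L)^t\to\mathbb{Q}^d$ is a polynomial vector encoding the leading coefficient in $n$ of a linear iterate produced by the vdC-operations. Integrating over $h$, the original average is then majorised by an averaged Host-Kra seminorm.

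The crux is twofold. First, I must refine Theorem~\ref{PET} so that it tracks coefficients: by induction on the vdC-operations, every coefficient polynomial $c_{i_j}(h_1,\ldots,h_t)$ is a $\mathbb{Z}$-linear combination of monomials in $h$ whose coefficient vectors lie in the set $\{\pm b_{i,v},\,\pm(b_{i,v}-b_{i',v})\colon 0<|v|\leq K,\;1\leq i,i'\leq k\}$; Lemma~\ref{ag2} then forces $\mathrm{span}_{\mathbb{Q}}\{G(c_{i_j}(h))\colon h\in(\mathbb{Z}^L)^t\}\subseteq\mathrm{span}_{\mathbb{Q}}\{G(r)\colon r\in R\}$. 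This refined PET is the first auxiliary result (to be proved in Section~\ref{s:pet}). Second---and this is where I expect the main obstacle to lie---I must pass from the \emph{averaged} seminorm bound $\E^\square_{h}\|f_i\|_{G(c_{i_1}(h)),\ldots,G(c_{i_J}(h))}$ to the \emph{single} seminorm $\|f_i\|_{\{G(r)^{\times\infty}\}_{r\in R}}$. This consolidation (the second auxiliary result, Section~\ref{s:pet3}) rests on the Tao--Ziegler concatenation theorem (Theorem~\ref{ct0} and Corollary~\ref{ct}): intersecting the factors $Z_{G(c_{i_1}(h)),\ldots,G(c_{i_J}(h))}$ over many tuples $h^{(1)},\ldots,h^{(s)}$ produces, via concatenation, a factor contained in one whose indexing subgroups are sums of the form $G(c_{i_{j_1}}(h^{(\sigma_1)}))+\cdots+G(c_{i_{j_s}}(h^{(\sigma_s)}))$, and by Lemma~\ref{ag2} every such sum lies in $\mathrm{span}_{\mathbb{Q}}\{G(r)\colon r\in R\}$.

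Assembling the four steps---PET reduction, linear-iterate seminorm bound, coefficient tracking, and concatenation-based consolidation---proves~\eqref{12}, because if $\mathbb{E}(f_i\mid Z_{\{G(r)^{\times\infty}\}_{r\in R}})=0$ then the single seminorm vanishes and hence so does the averaged bound. Earlier treatments of analogous problems (weak mixing~\cite{B}, total or full ergodicity~\cite{FK,Jo}, distinct-degree families~\cite{CFH}) each sidestepped this consolidation by hypotheses that made the relevant subgroups trivially coincide, whereas here one must carry out the concatenation in a way that matches exactly the coefficient bookkeeping produced by the PET stage---this is the essential difficulty.
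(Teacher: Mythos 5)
Your plan follows the paper's route faithfully at the top level: reduce the average via PET induction to linear iterates, apply the linear Host--Kra seminorm bound, track the coefficients of the iterates through the vdC-operations (the paper's Proposition~\ref{pet}), and then use the Tao--Ziegler concatenation machinery to consolidate the averaged seminorm into a single factor (the paper's Proposition~\ref{pet3}). You also correctly observe that~\eqref{413} reduces to~\eqref{12} via Corollary~\ref{ppp}, and you rightly flag the consolidation step as the real obstacle. Two remarks, though.

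First, a minor imprecision: the coefficient vectors emerging from the vdC-operations are not literally of the form $\pm b_{i,v}$ or $\pm(b_{i,v}-b_{i',v})$; shifting $n\mapsto n+h_{s+1}$ introduces binomial-coefficient prefactors, so the paper tracks them only up to the equivalence relation $\sim$ (arbitrary nonzero rational scaling and choice of base point), captured by the sets $R_v$ and the relation $\lesssim$ in Propositions~\ref{pet} and~\ref{1234}. This does not hurt you, since ultimately only the $\mathbb{Q}$-spans $G(\cdot)$ and $H_{i,m}$ matter, but the claim as you wrote it is false.

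Second, and this is the genuine gap: your description of what concatenation buys is the wrong inclusion. You conclude that the sums $G(c_{i_{j_1}}(h^{(\sigma_1)}))+\cdots+G(c_{i_{j_s}}(h^{(\sigma_s)}))$ \emph{lie inside} $\mathrm{span}_{\mathbb{Q}}\{G(r)\colon r\in R\}$, but that is useless: by Lemma~\ref{replacement0}(v) a \emph{smaller} index subgroup gives a \emph{larger} factor, so an upper bound on the sums gives no upper bound on the concatenated factor. What the argument actually needs is the opposite containment: that after intersecting over a well-chosen finite family of tuples $\mathbf{h}_u$, some of these sums $P$ grow large enough to \emph{contain} $H_{i,m}$ (which in turn contains a subgroup $G(r_m)$ with $r_m\in R$), forcing $Z_P\subseteq Z_{H_{i,m}^{\times\infty}}\subseteq Z_{G(r_m)^{\times\infty}}\subseteq Z_{\{G(r)^{\times\infty}\}_{r\in R}}$. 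Establishing this requires the notion of \emph{admissible} tuples and a pigeonhole argument on dimensions (at most $d$ steps of strict growth, so after $dt_1$ intersections some $P$ must contain $H_{1,m}$), together with Lemma~\ref{ag} to show that generic $\mathbf{h}$ increases the span. This mechanism---the heart of Proposition~\ref{pet3}---is entirely absent from your sketch, and without it the consolidation step collapses.
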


\begin{remark*}
	The following weaker form of (\ref{12}) in Theorem \ref{T:2} can be derived by the results of \cite{Jo}: 
	\begin{equation}\nonumber	
			\mathbb{E}_{n\in\mathbb{Z}^{L}} T_{p_{1}(n)}f_1\cdot\ldots\cdot T_{p_{k}(n)}f_k=0 \text{ if  $\mathbb{E}(f_i\vert Z_{\{G(r)^{\times \infty}\}_{r\in \Z^{d}\setminus \{\bf 0 \}   }})=0$ for some $1\leq i\leq k$.}
	\end{equation}
	Hence, (\ref{413}) holds if $T_{g}$ is assumed to be ergodic for $\mu$ for all $g\in\mathbb{Z}^{d}\backslash\{{\bf{0}}\}$. Theorem~\ref{T:2} improves the result of \cite{Jo} since one only needs to require finitely many $T_{g}$'s to be ergodic (\emph{i.e.}, the generators of $G(r), r\in R$) in order to deduce (\ref{413}).
	On the other hand, it is worth noting that (\ref{12}) has room for improvement (meaning that it is possible for one to replace the factor $Z_{\{G(r)^{\times \infty}\}_{r\in R}}$  of (\ref{12}) with smaller ones), as we shall see in the examples below. Actually, we do have a stronger version of (\ref{12})  (see the proof of Theorem~\ref{T:2}), but (\ref{12}) already captures the essence of our result as it is stated here.
\end{remark*}

Another important example of polynomial averages is the following, for which we actually characterize its convergence to the ``expected'' limit (in Theorem~\ref{t1}), where all the transformations have the same polynomial iterate.  

\begin{example}\label{ex1}
	Let $(X,\mathcal{B},\mu,T_{1},\dots,T_{d})$ be a system with commuting transformations. One should think of $T_{1},\dots,T_{d}$ as a $\mathbb{Z}^{d}$-action $(S_{g})_{g\in\mathbb{Z}^{d}}$ with $T_{i}=S_{e_{i}}$, where we recall that $e_{i}\in\mathbb{Z}^{d}$ denotes the vector whose $i$th entry is 1 and all other entries are $0$. Let $p_{1},\dots,p_{d}\colon\mathbb{Z}\to\mathbb{Z}^{d}$ be polynomials given by $p_{i}(n)=p(n)e_{i}$ for some  polynomial $p\colon\mathbb{Z}\to\mathbb{Z}$. By Theorem \ref{T:2}, we have that
	\begin{equation}\label{43}
		\mathbb{E}_{n\in\Z} T_{1}^{p(n)}f_1\cdot\ldots\cdot T_{d}^{p(n)}f_{d}=0 \text{ if $\mathbb{E}(f_i\vert Z_{\{G(r)^{\times \infty}\}_{r\in R}})=0$ for some $1\leq i\leq d$,}
	\end{equation}
	where $R=\{T_{i}, T_{i}T^{-1}_{j}\colon 1\leq i,j\leq d, i\neq j\}$.
	We remark that $Z_{\{G(r)^{\times \infty}\}_{r\in R}}$ is not necessarily the smallest factor with this property. For example, if $p(n)=n$, then (\ref{43}) is a weaker form of \cref{P:1}  (or \cite[Proposition~1]{Ho}).
\end{example}

\noindent {\bf Continuation of Example~\ref{Eg:1}.}
Recall the $\mathbb{Z}^{2}$-system $\X$ with two commuting transformations $T_{1},T_{2}$ and $p_{1},p_{2}\colon\mathbb{Z}\to\mathbb{Z}^{2}$ polynomials given by $p_{1}(n)=(n^{2}+n,0)$ and $p_{2}(n)=(0,n^{2})$. By Theorem~\ref{T:2}, we have that
\begin{equation}\label{40}
	\begin{split}
		\quad\mathbb{E}_{n\in\Z} T_{1}^{n^{2}+n}f_1\cdot T_{2}^{n^{2}}f_2
		=0 \text{ if $\mathbb{E}(f_i\vert Z_{\{G(r)^{\times \infty}\}_{r\in R}})=0$ for $i=1$ or 2,}
	\end{split}
\end{equation}
where $R=\{T_{1},T_{2},T_{1}T^{-1}_{2}\}$.
Again $Z_{\{G(r)^{\times \infty}\}_{r\in R}}$ is not the smallest factor with this property (later, in  equality (\ref{400}), we will obtain an improvement of (\ref{40})).

It is an interesting, in general open (and definitely hard), question to ask what are the smallest factors $Z_{1},\dots, Z_{k}$ of $\X$ such that for every $f_{1},\dots,f_{k}\in L^{\infty}(\mu)$,
\begin{equation}\nonumber
	\mathbb{E}_{n\in\Z} T_{p_{1}(n)}f_1\cdot\ldots\cdot T_{p_{k}(n)}f_k=0 \text{ if $\mathbb{E}(f_i\vert Z_{i})=0$ for some $1\leq i\leq k$.}
\end{equation}

\subsection{Proofs of the joint ergodicity results assuming Theorem \ref{T:2}}\label{s5.2}

In this subsection we explain how to derive our main joint ergodicity results, Theorems~\ref{t2} and \ref{t1}, assuming the validity of Theorem~\ref{T:2}. To this end, we recall an adapted from \cite{F2021} definition from \cite{AA}.

\begin{definition*}[\cite{AA}]
We say that a collection of mappings $a_1,\ldots,a_k\colon \Z^d \to \Z^d$ is:  
\begin{enumerate}
    \item[(i)] \emph{good for seminorm estimates for the system $(X,\mathcal{B},\mu, (T_g)_{g\in \Z^d})$ along a F\o lner sequence $(I_N)_{N\in\N}$ in $\Z^d$}, if there exists $M\in \N^\ast$ such that if $f_1,\ldots,f_k\in L^\infty(\mu)$ and $\norm{f_\ell}_{(\Z^d)^{\times M}}=0$ for some $\ell\in\{1,\ldots,k\},$ then 
    \begin{equation*}
        \lim_{N\to\infty}\frac{1}{|I_N|}\sum_{n\in I_{N}}\prod_{i=1}^\ell T_{a_{i}(n)}f_i=0,
    \end{equation*} where the convergence takes place in $L^2(\mu).$ 
    \item[(ii)] \emph{good for equidistribution for the system $(X,\mathcal{B},\mu, (T_g)_{g\in \Z^{d}})$ along a F\o lner sequence $(I_N)_{N\in\N}$ in $\Z^{d}$}, if for all $\alpha_1,\ldots,\alpha_k\in \text{Spec}\left((T_{g})_{g\in \Z^{d}}\right),$ not all of them trivial, we have 
    \begin{equation*}
\lim_{N\to\infty}\frac{1}{|I_N|}\sum_{n\in I_{N}}\exp(\alpha_{1}(a_{1}(n))+\dots+\alpha_{k}(a_{k}(n)))=0,       
    \end{equation*}
    where $$\text{Spec}\left((T_{g})_{g\in\Z^{d}}\right)\coloneqq \{\alpha\in \text{Hom}(\Z^{d},\R/\Z)\colon T_{g}f=\exp(\alpha(g))f, \text{ $g\in \Z^{d},$ for some non-zero } f\in L^{2}(\mu)\},$$ and $\exp(x)\coloneqq e^{2\pi i x}$ for all  $x\in\R.$ 
\end{enumerate}
\end{definition*}

\begin{proof}[Proof of Theorem~\ref{t2} assuming  Theorem~\ref{T:2}]
	Let $R$ be defined as in Theorem \ref{T:2}. As $(T_{g})_{g\in G(r)}$ is ergodic for all $r\in R$, by Theorem \ref{T:2}, we may assume without loss of generality that all $f_{1},\dots, f_{k}$ are measurable with respect to $Z_{(\mathbb{Z}^{d})^{\times\infty}}(\X)$ (note that conditions (i) and (ii) remain valid when passing to a factor system). By $L^{1}(\mu)$-approximation, we may assume without loss of generality that all $f_{1},\dots, f_{k}$ are measurable with respect to $Z_{(\mathbb{Z}^{d})^{\times M}}(\X)$ for some $M\in\mathbb{N}$. By Theorem~\ref{ppq} and again by $L^{1}(\mu)$-approximation, we may further assume without loss of generality that all $f_{1},\dots, f_{k}$ are measurable with respect to a factor of $\X$ which is isomorphic to an $(M-1)$-step $\mathbb{Z}^{d}$-nilsystem.

Fix a F\o lner sequence $(I_{N})_{N\in\N}$ in $\Z^{L}$. 
We wish to show that 
$$\lim_{N\to\infty}\frac{1}{|I_N|}\sum_{n\in I_{N}}\prod_{i=1}^k T_{p_{i}(n)}f_i=\prod_{i=1}^{k}\int_{X}f_{i}\,d\mu$$
for all $f_{1},\dots,f_{k}\in L^{\infty}(\mu)$.
 
We first consider the case $L=d$ in order to use \cite[Theorem~3.9]{AA}. To this end, it suffices to show that $(p_{1},\dots,p_{k})$ is good for seminorm estimates and good for equidistribution. 
 
 If one of $f_{i}$ is constant zero, then there is nothing to prove, so we assume that no $f_{i}$ is identically equal to zero.
Since $\X$ is isomorphic to an $(M-1)$-step $\mathbb{Z}^{d}$-nilsystem, we have that $\nnorm{f_{i}}_{(\Z^{d})^{\times M}}\neq 0,$ which implies that 
 $(p_{1},\dots,p_{k})$ is good for seminorm estimates. 
 
Suppose, for the sake of contradiction, that $(p_{1},\dots,p_{k})$ is not good for equidistribution. Then there exist $\alpha_{1},\dots,\alpha_{k}\in \text{Spec}\left((T_{g})_{g\in\Z^{d}}\right)$, not all of them trivial, such that 
 \begin{equation}\label{E:d=L}
     \lim_{N\to\infty}\frac{1}{|I_N|}\sum_{n\in I_{N}}\exp(\alpha_{1}(p_{1}(n))+\dots+\alpha_{k}(p_{k}(n)))=c
 \end{equation}
 for some $c\neq 0$.\footnote{We remark that the limit on the left hand side of (\ref{E:d=L}) exists by \cite[Theorem~A]{L05}. 
 }
 For $1\leq i\leq k$,
 since $\alpha_{i} \in \text{Spec}\left((T_{g})_{g\in\Z^{d}}\right)$, there exists some nonzero $f_{i}\in L^{2}(\mu)$ such that $T_{g}f_{i}=\exp(\alpha_{i}(g))f_{i}$ for all $g\in\Z^{d}$. Since $\X$ is ergodic, we have that $\vert f_{i}\vert$ is a non-zero constant $\mu$-a.e. Using \eqref{E:d=L}, we have
 \[
 \lim_{N\to\infty}\frac{1}{|I_N|}\sum_{n\in I_{N}}\bigotimes_{i=1}^k T_{p_{i}(n)}f_i=\lim_{N\to\infty}\frac{1}{|I_N|}\sum_{n\in I_{N}}\bigotimes_{i=1}^k \exp(\alpha_{i}(p_{i}(n)))f_i=c\bigotimes_{i=1}^k f_{i}\not\equiv 0.
 \]
 On the other hand, since at least one of $\alpha_{1},\dots,\alpha_{k}$ is non-trivial, we have that $\int_{X^{k}}\bigotimes_{i=1}^{k}f_{i}\,d\mu^{\otimes k}$ $=\prod_{i=1}^{k}\int_{X}f_{i}\,d\mu=0$, which contradicts condition (ii). Therefore, $(p_{1},\dots,p_{k})$ is good for equidistribution.
 
 \
 
    Assume now that $L<d$. Let $(I'_{N})_{N\in\N}$ be the  F\o lner sequence in $\Z^{d}$ given by $I'_{N}:=I_{N}\times [-N,N]^{d-L}$.
    Let $p'_{1},\dots,p'_{k}\colon\Z^{d}\to\Z^{d}$ be polynomials given by
    $p'_{i}(n,m):=p_{i}(n)$ for all $n\in\Z^{L}$ and $m\in \Z^{d-L}$. 
    Let $R'$ be the set defined in (\ref{R}) but associated with the polynomials $p'_{1},\dots,p'_{k}$.
    It is hot hard to see that $R=R'$. Moreover, since $(T_{p_{1}(n)}\times\dots\times T_{p_{k}(n)})_{n\in\Z^{L}}$ is ergodic for $\mu^{\otimes k}$, so is $(T_{p'_{1}(n)}\times\dots\times T_{p'_{k}(n)})_{n\in\Z^{d}}$. 
    
    By the $d=L$ case, we have that
   $$\lim_{N\to\infty}\frac{1}{|I_N|}\sum_{n\in I_{N}}\prod_{i=1}^k T_{p_{i}(n)}f_i=\lim_{N\to\infty}\frac{1}{|I'_N|}\sum_{n\in I'_{N}}\prod_{i=1}^k T_{p'_{i}(n)}f_i=\prod_{i=1}^{k}\int_{X}f_{i}\,d\mu.$$

   Finally, we assume that $L>d$. Let $(S_{(n,m)})_{n\in\Z^{d},m\in\Z^{L-d}}$ be a $\Z^{L}$-action on $(X,\mathcal{B},\mu)$ such that $S_{(n,m)}=T_{n}$. Let $p'_{1},\dots,p'_{k}\colon\Z^{L}\to\Z^{L}$ be polynomials given by
    $p'_{i}(n):=(p_{i}(n),0,\dots,0)$ for all $n\in\Z^{L}$, where the last $L-d$ entries are zero.  
    Let $R'$ be the set defined in (\ref{R}) but associated with the polynomials $p'_{1},\dots,p'_{k}$. By definition, the set $R'$ consists of elements of the form $(r,0,\dots,0)\in \Z^{L}, r\in R$ (with respect to the $\Z^{L}$-system $(X,\mathcal{B},\mu,(S_{g})_{g\in\Z^{L}})$). By construction of $(S_{g})_{g\in\Z^{L}}$, the 
    ergodicity of $(T_{g})_{g\in G(r)}$ with respect to the $\Z^{d}$-system $(X,\mathcal{B},\mu,(T_{g})_{g\in\Z^{d}})$ implies ergodicity of the 
    ergodicity of $(S_{g})_{g\in G(r,0,\dots,0)}$ with respect to the $\Z^{L}$-system $(X,\mathcal{B},\mu,(S_{g})_{g\in\Z^{L}})$ for all $r\in R$. Moreover, since $S_{p'_{i}(n)}=S_{(p_{i}(n),0,\dots,0)}=T_{p_{i}(n)}$ for all $n\in \Z^{L}$, we have that $(S_{p'_{1}(n)}\times\dots\times S_{p'_{k}(n)})_{n\in\Z^{L}}=(T_{p_{1}(n)}\times\dots\times T_{p_{k}(n)})_{n\in\Z^{L}}$ is ergodic for $\mu^{\otimes k}$. 
    
    By the $d=L$ case, we have that
   \[\lim_{N\to\infty}\frac{1}{|I_N|}\sum_{n\in I_{N}}\prod_{i=1}^k T_{p_{i}(n)}f_i=\lim_{N\to\infty}\frac{1}{|I_N|}\sum_{n\in I_{N}}\prod_{i=1}^k S_{p'_{i}(n)}f_i=\prod_{i=1}^{k}\int_{X}f_{i}\,d\mu,
   \]
   which completes the proof.    
\end{proof}

Before we proceed with the proof of Theorem~\ref{t1}, we need the following lemma and proposition:

\begin{lemma} \label{lemma:product kroneckers}
Let	$(X,\mathcal{B},\mu, T_{1},\dots,T_{d})$ be a system with commuting transformations. Then in the product space $(X^{d},\mathcal{B}^d,\mu^{\otimes d})$, the $\sigma$-algebra of $T_1\times \cdots\times T_d$-invariant sets is measurable with respect to $\bigotimes_{i=1}^{d} Z_{T_i,T_i}$.
\end{lemma}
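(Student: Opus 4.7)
The plan is to reduce the claim to a statement about individual eigenfunctions of $T_1\times\cdots\times T_d$, and then invoke the spectral-type decomposition from \cite[Lemma~4.18]{Fu} that was already used in the proof of Lemma \ref{prod} above. Specifically, the $(T_1\times\cdots\times T_d)$-invariant $\sigma$-algebra is generated by its bounded invariant functions, and every such function is an eigenfunction of $T_1\times\cdots\times T_d$ with eigenvalue $1$. Thus it suffices to show that every such eigenfunction is measurable with respect to $\bigotimes_{i=1}^d Z_{T_i,T_i}$.

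To carry this out, let $F\in L^\infty(\mu^{\otimes d})$ satisfy $(T_1\times\cdots\times T_d)F=F$. First I would appeal to \cite[Lemma~4.18]{Fu}, exactly as in the proof of Lemma~\ref{prod}, to obtain a decomposition
\[ F=\sum_{n} c_n\, f_{1,n}\otimes\cdots\otimes f_{d,n} \]
in $L^2(\mu^{\otimes d})$, with each $f_{i,n}\in L^\infty(\mu)$ an eigenfunction of $T_i$, say $T_i f_{i,n}=\lambda_{i,n}f_{i,n}$ (with $\lambda_{i,n}=1$ allowed), and $\prod_{i=1}^d\lambda_{i,n}=1$. Each eigenfunction $f_{i,n}$ lies in the Kronecker factor of the $\mathbb{Z}$-system $(X,\mathcal{B},\mu,T_i)$, which by Lemma~\ref{k} coincides with $Z_{T_i,T_i}$. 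Hence every summand $f_{1,n}\otimes\cdots\otimes f_{d,n}$ is $\bigotimes_{i=1}^d Z_{T_i,T_i}$-measurable, and passing to the $L^2$-limit so is $F$. Since $F$ was an arbitrary bounded invariant function, the invariant $\sigma$-algebra is measurable with respect to $\bigotimes_{i=1}^d Z_{T_i,T_i}$.

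The main (and only) technical point will be that Lemma~\ref{k} is stated for ergodic $\mathbb{Z}^d$-systems, while no ergodicity is assumed here on the individual $T_i$'s. Since we only need the inclusion of the Kronecker factor of $T_i$ into $Z_{T_i,T_i}$ rather than equality, this can be handled either by passing to the ergodic decomposition of $\mu$ with respect to $T_i$, or by a direct check that any eigenfunction of $T_i$ has strictly positive $\|\cdot\|_{T_i,T_i}$-seminorm using the definition of the seminorm together with Lemma~\ref{replacement0}(iii).
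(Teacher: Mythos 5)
Your proposal is correct but takes a genuinely different route from the paper. The paper's proof is a short, self-contained computation: it tests $\mathbb{E}(\,\cdot\mid\mathcal{I}(T_1\times\cdots\times T_d))$ against a tensor $f_1\otimes\cdots\otimes f_d$, applies the van der Corput inequality (Lemma~\ref{lemma:VdCclassic}) together with Jensen and Cauchy--Schwarz, and bounds the result directly by $\|f_1\|_{T_1,T_1}^4$ via Lemma~\ref{replacement0}(iii). Your argument instead invokes the spectral/tensor decomposition of eigenfunctions of $T_1\times\cdots\times T_d$ from \cite[Lemma~4.18]{Fu} (exactly as the paper already does in the proof of Lemma~\ref{prod}), reducing the claim to showing each $T_i$-eigenfunction lies in $Z_{T_i,T_i}$. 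That route is sound: the underlying tensor-product spectral fact does not require ergodicity of the $T_i$ (it follows from the observation that the continuous-spectrum part of a tensor product of unitaries cannot support eigenvectors), and you rightly flag that Lemma~\ref{k} cannot be quoted directly without ergodicity. On that last point, be a bit careful with the ``direct check'': positivity of $\|f\|_{T_i,T_i}$ for a nonzero eigenfunction does not by itself give $Z_{T_i,T_i}$-measurability; what you actually want is that $f_0:=f-\mathbb{E}(f\mid Z_{T_i,T_i})$ is again an eigenfunction (since the seminorm and hence the factor are $T_i$-invariant) with $\|f_0\|_{T_i,T_i}=0$, and then the computation $\|f_0\|_{T_i,T_i}^4=\int|f_0|^4\,d\mu$ (using that $|f_0|^2$ is $T_i$-invariant, together with Lemma~\ref{replacement0}(iii)) forces $f_0=0$. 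With that refinement the argument closes. The trade-off between the two approaches: the paper's van der Corput computation is more elementary and stays entirely within the toolkit already set up (seminorms and vdC), whereas your spectral argument is conceptually cleaner and explains \emph{why} the product Kronecker-type factor appears, at the cost of leaning on Furstenberg's lemma and the extra care needed in the non-ergodic setting.
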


\begin{proof}
It suffices to show that $\mathbb{E}(f_1\otimes \cdots \otimes f_d \vert \I(T_1\times\cdots \times T_d))=0$ whenever $\mathbb{E}(f_i \vert Z_{T_i,T_i})=0$ for some $1\leq i \leq d$.  As usual, we assume without loss that all functions are bounded by $1$  in $L^{\infty}(\mu)$ and that 
$\mathbb{E}(f_1 \vert Z_{T_1,T_1})=0$ (or equivalently $\|f_1 \|_{T_1,T_1}=0$). By \cref{lemma:VdCclassic} and Jensen's inequality, setting $a(n)=T_1^n f_1 \otimes \cdots \otimes T_d^n f_d,$ we have that 
\begin{align*}
\| \mathbb{E}(f_1\otimes \cdots \otimes f_d \vert \I(T_1\times\cdots \times T_d)) \|^{4}_{L^{2}(\mu^{\otimes d})} &= \| \mathbb{E}_{n\in \Z} (T_1\times \cdots \times T_d)^{n} f_1\otimes \cdots \otimes f_d \|^{4}_{L^{2}(\mu^{\otimes d})} \\ 
& \leq \left ( 4  \mathbb{E}^{\square}_{h\in \Z}  | \mathbb{E}_{n\in \Z} \langle a(n) ,  a(n+h)  \rangle  \right |)^{2} \\
& \leq 16  \mathbb{E}^{\square}_{h\in \Z}   \mathbb{E}_{n\in \Z} \langle  f_1 \otimes \cdots \otimes  f_d ,   T_1^{h} f_1 \otimes \cdots \otimes T_d^{h} f_d  \rangle^2 \\
& \leq  16  \mathbb{E}^{\square}_{h\in \Z} \left \vert \int  f_1 \cdot  T_1^{h} f_1  d\mu  \right \vert^{2} \\
& =  16  \mathbb{E}^{\square}_{h\in \Z} \left \vert \int  \mathbb{E}(f_1 \cdot  T_1^{h} f_1 \vert  \I(T_1)) d\mu  \right  \vert^2 \\
& \leq  16  \mathbb{E}^{\square}_{h\in \Z} \left \| \mathbb{E}(f_1 \cdot  T_1^{h} f_1 \vert  \I(T_1) ) \right \|_{L^2(\mu)}^{2} \\
& =16 \|f_1  \|_{T_1,T_1}^{4},
\end{align*} 
where the last line follows, for instance, from \cref{replacement0} (iii). This finishes the proof. 
\end{proof}

\begin{proposition}\label{inverse}
	Let $d,L\in\mathbb{N}^\ast,$ $p\colon\mathbb{Z}^{L}\to\mathbb{Z}$  a polynomial and $(X,\mathcal{B},\mu, T_{1},\dots,T_{d})$ a system with commuting transformations such that $(T_{1}^{p(n)},\dots,T_{d}^{p(n)})_{n\in\mathbb{Z}^{L}}$ is jointly ergodic for $\mu$. Then
	\begin{itemize}
		\item[(i)]  $((T_{i}T_{j}^{-1})^{p(n)})_{n\in\mathbb{Z}^{L}}$ is ergodic for $\mu$ for all $1\leq i,j\leq d, i\neq j$; and
		\item[(ii)] $(T_{1}^{p(n)}\times\dots\times T_{d}^{p(n)})_{n\in\mathbb{Z}^{L}}$ is ergodic for $\mu^{\otimes d}$. 
	\end{itemize}
\end{proposition}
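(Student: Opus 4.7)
The plan is to derive (i) and (ii) directly from the definition of joint ergodicity, in each case by specializing the test functions $(f_1,\dots,f_d)$ to extract information about the auxiliary sequence in question. Theorem~\ref{T:2} is not needed.

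For (i), the starting point is the algebraic identity
\[ T_i^{p(n)}f\cdot T_j^{p(n)}h = T_j^{p(n)}\bigl((T_iT_j^{-1})^{p(n)}f\cdot h\bigr),\]
which holds because the $T_\ell$'s commute. I would apply joint ergodicity with $f_i=f$, $f_j=h$ and $f_\ell=1_X$ for $\ell\notin\{i,j\}$, integrate against $\mu$ using its $T_j$-invariance, and arrive at
\[\mathbb{E}_{n\in\mathbb{Z}^L}\int (T_iT_j^{-1})^{p(n)}f\cdot h\,d\mu = \int f\,d\mu\cdot\int h\,d\mu.\]
By Walsh's theorem the $L^2$-Ces\`aro limit $g:=\mathbb{E}_{n\in\mathbb{Z}^L}(T_iT_j^{-1})^{p(n)}f$ exists, so the identity reads $\int g\cdot h\,d\mu=\int f\,d\mu\cdot \int h\,d\mu$ for every $h\in L^\infty(\mu)$, which forces $g=\int f\,d\mu$ almost everywhere by $L^1$--$L^\infty$ duality. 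This is precisely the ergodicity of $((T_iT_j^{-1})^{p(n)})_{n\in\mathbb{Z}^L}$.

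For (ii), I would use Proposition~\ref{234}(iii) as my target, proving it in two steps. First, joint ergodicity applied to a $T_i$-invariant function (with $1_X$ in the remaining slots) gives $f=\mathbb{E}_{n\in\mathbb{Z}^L}T_i^{p(n)}f=\int f\,d\mu$, so each $T_i$ is ergodic for $\mu$; consequently every non-constant eigenfunction of $T_i$ has constant nonzero modulus and integrates to $0$. Next, fix any $\lambda_1,\dots,\lambda_d$ with each $\lambda_i$ being either $1$ or an eigenvalue of $T_i$, at least one being an eigenvalue. For each $i$ with $\lambda_i\neq 1$ pick an eigenfunction $f_i$ of $T_i$ with eigenvalue $\lambda_i$, and set $f_i=1_X$ otherwise. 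Joint ergodicity then gives
\[ \Bigl(\mathbb{E}_{n\in\mathbb{Z}^L}\Bigl(\prod_{i=1}^{d}\lambda_i\Bigr)^{p(n)}\Bigr)\prod_{i=1}^{d}f_i = \mathbb{E}_{n\in\mathbb{Z}^L}\prod_{i=1}^{d}T_i^{p(n)}f_i = \prod_{i=1}^{d}\int f_i\,d\mu = 0,\]
the final equality because at least one of the integrals vanishes. Since every factor $f_i$ has constant nonzero modulus, $\prod_i f_i\neq 0$ almost everywhere, forcing the scalar factor to vanish; this is exactly the uniformity of $\prod_i\lambda_i$ for $p$, and Proposition~\ref{234} concludes (ii).

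No serious obstacle arises in either part: both arguments amount to a judicious choice of test functions in the joint-ergodicity hypothesis, followed either by Walsh's convergence theorem (for (i)) or by the eigenvalue characterization of Proposition~\ref{234} (for (ii)). The only mildly delicate point is the passage from $\int g\cdot h\,d\mu=\int f\,d\mu\cdot\int h\,d\mu$ to $g=\int f\,d\mu$ in (i), which is immediate from $L^1$--$L^\infty$ duality.
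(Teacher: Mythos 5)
Your proposal is correct, and in both parts it takes a genuinely different (and arguably cleaner) route than the paper's proof.

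For part (i), the paper argues by contradiction: it assumes some $((T_iT_j^{-1})^{p(n)})_n$ is not ergodic, picks a non-constant $g$ with a non-constant Ces\`aro limit $g'$, and tests joint ergodicity against the pair $(g,g')$ to produce the contradiction $\int g'^2\,d\mu > (\int g'\,d\mu)^2$. You instead prove ergodicity directly: for arbitrary $f,h$, integrate the joint-ergodicity identity, use measure-preservingness to move a $T_j^{p(n)}$ across, and obtain $\int g\cdot h\,d\mu = \int f\,d\mu\cdot \int h\,d\mu$ for every $h$, where $g=\mathbb{E}_{n}(T_iT_j^{-1})^{p(n)}f$; then $L^1$--$L^\infty$ duality forces $g=\int f\,d\mu$. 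Both arguments invoke the same technical fact — existence of the single-transformation polynomial limit along all F{\o}lner sequences — which the paper attributes to Furstenberg's spectral approach and you attribute to Walsh; either reference suffices. Your direct route is a bit more transparent than the contradiction.

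For part (ii), the divergence is more substantial. The paper proves ergodicity of the product sequence directly by first showing that the product $\sigma$-algebra $\I(T_1\times\cdots\times T_d)$ lies inside $\bigotimes_i Z_{T_i,T_i}$ (this is exactly what Lemma~\ref{lemma:product kroneckers} supplies), reducing to the case where every $f_i$ is Kronecker-measurable, and then approximating by eigenfunctions. You instead target condition (iii) of Proposition~\ref{234}: establish ergodicity of each $T_i$ from joint ergodicity, use that to pin down the modulus and mean of eigenfunctions, and then read off uniformity of $\prod_i\lambda_i$ for $p$ by plugging the eigenfunctions into the joint-ergodicity average. The upshot is that you avoid Lemma~\ref{lemma:product kroneckers} entirely by outsourcing the spectral reduction to Proposition~\ref{234}, which is already proved earlier in the paper; your argument is thus more modular and somewhat shorter. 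One should only note that the passage from (iii) to (i) in Proposition~\ref{234} implicitly requires $T_1\times\cdots\times T_d$ to be ergodic before invoking Lemma~\ref{k}, but this is automatic: if $1$ were an eigenvalue of the product, Lemma~\ref{prod} together with (iii) would force $1$ to be uniform for $p$, a contradiction. So your reduction goes through cleanly.
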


\begin{proof}
	The idea of the proof for Part~(i) is similar to \cite[Proposition~2.1]{BD}. Since the language we use is different, we present the proof for completeness.
	
	By assumption,
	\begin{equation}\label{t02}
		\mathbb{E}_{n\in\mathbb{Z}^{L}}T^{p(n)}_{1}f_{1}\cdot\ldots\cdot T^{p(n)}_{d}f_{d}=\int_{X}f_{1}\,d\mu\cdot\ldots\cdot \int_{X}f_{d}\,d\mu
	\end{equation}
	for all $f_{1},\dots,f_{d}\in L^{\infty}(\mu)$. Suppose first that (i) fails. We may assume without loss of generality that $((T_{1}T^{-1}_{2})^{p(n)})_{n\in\mathbb{Z}^{L}}$ is not ergodic for $\mu$. So there exist $g\in L^{\infty}(\mu)$ not $\mu$-a.e. equal to a constant function and a function $g'\in L^{2}(\mu)$  such that
	\[g'\coloneqq \mathbb{E}_{n\in\mathbb{Z}^{L}}(T_{1}T^{-1}_{2})^{p(n)}g\neq \int_{X}g\,d\mu.
	 \footnote{ By Proposition \ref{poly} and Lemma \ref{k}, to show the existence of this limit it suffices to do so for the case where $\X=\mathcal{K}(\X)$, which is a classical result (see for example \cite{L05}).}\]

	Then $\int_{X}g'\,d\mu=\int_{X}g\,d\mu$.
	Note that $g'$ cannot be $\mu$-a.e. equal to a constant. Letting $f_{1}=g, f_{2}=g'$ and $f_{0}=f_{3}=\dots=f_{d}\equiv 1$, we have that
	\begin{eqnarray*}
		\int_{X}f_{0}\cdot\mathbb{E}_{n\in\mathbb{Z}^{L}}T^{p(n)}_{1}f_{1}\cdot\ldots\cdot T^{p(n)}_{d}f_{d}\,d\mu
		& = &\mathbb{E}_{n\in\mathbb{Z}^{L}}\int_{X}T^{p(n)}_{1}g\cdot T^{p(n)}_{2}g'\,d\mu
		\\&= &\mathbb{E}_{n\in\mathbb{Z}^{L}}\int_{X}(T_{1}T^{-1}_{2})^{p(n)}g\cdot g'\,d\mu
		\\ & = &\int_{X}\mathbb{E}_{n\in\mathbb{Z}^{L}}(T_{1}T^{-1}_{2})^{p(n)}g\cdot g'\,d\mu
		\\&= &\int_{X}g'^{2}\,d\mu>\left(\int_{X}g'\,d\mu\right)^{2}=\left(\int_{X}g\,d\mu\right)^{2}
		\\	&= &\int_{X}f_{1}\,d\mu\cdot\ldots \cdot\int_{X}f_{d}\,d\mu,
	\end{eqnarray*}
	a contradiction to (\ref{t02}), proving  (i).

	To show (ii), it suffices to show that for all $f_{1},\dots,f_{d}\in L^{\infty}(\mu)$ with $\prod_{i=1}^{d} \int_{X}f_{i}\,d\mu=0$,  we have that
	\begin{eqnarray}\label{73}
	\mathbb{E}_{n\in\mathbb{Z}^{L}}\bigotimes_{i=1}^{d}T_{i}^{p(n)}f_{i}=\mathbb{E}_{n\in\mathbb{Z}^{L}}(T_{1}\times\dots\times T_{d})^{p(n)}\bigotimes_{i=1}^{d}f_{i}=0.
\end{eqnarray}
	We first claim that
		\begin{eqnarray}\nonumber
			\mathbb{E}_{n\in\mathbb{Z}^{L}}\bigotimes_{i=1}^{d}T_{i}^{p(n)}f_{i}=0 \text{ if $\mathbb{E}(f_{i}\vert Z_{\mathbb{Z}^{d},\mathbb{Z}^{d}}(\X))=0$ for some $1\leq i\leq d$.}
		\end{eqnarray}
	
	We apply the proof of Proposition~\ref{poly} to the $\Z$-system $(X^{d},\mathcal{B}^{d},\mu^{\otimes d},T_{1}\times\dots\times T_{d})$. Suppose that $\mathbb{E}(f_{i}\vert Z_{\mathbb{Z}^{d},\mathbb{Z}^{d}}(\X))=0$ for some $1\leq i\leq d$. 
	By  \cref{erg} and  (\ref{equation:PolyCharFactor}) in the proof of Proposition~\ref{poly},
	it suffices to show that the set of $(h_{1},\dots,h_{K})\in (\Z^{L})^{K}$ such that
	\begin{equation}\label{72}
		\mathbb{E}\Bigl(\bigotimes_{i=1}^{d}f_{i}\vert \I(G(c(h_1,\ldots,h_K)))\Bigr)=0
	\end{equation}		
	is of density 1, where $c\colon(\Z^{L})^{K}\to \Z$ is a non-constant polynomial. 
	If $c(h_{1},\dots,h_{K})\neq 0$, then  $\I(G(c(h_1,\ldots,h_K)))$ is the sub-$\sigma$-algebra of $\mathcal{B}^{d}$ consisting of the $(T_{1}\times\dots\times T_{d})^{c(h_{1},\dots,h_{K})}$-invariant sets. By Lemma \ref{lemma:product kroneckers}, $$\I(G(c(h_1,\ldots,h_K)))\subseteq\bigotimes_{i=1}^{d}Z_{T^{c(h_1,\ldots,h_K)}_{i},T^{c(h_1,\ldots,h_K)}_{i}}=\bigotimes_{i=1}^{d}Z_{T_{i},T_{i}},$$
	where we used \cref{replacement0} (iv) in the last equality. On the other hand,  by (\ref{t02}), we have that $(T^{p(n)}_{i})_{n\in\mathbb{Z}^{L}}$ is ergodic for $\mu$ for all $1\leq i\leq d$, which implies that $T_{i}$ is ergodic for $\mu$. By Lemma~\ref{replacement0}~(ii), we have that
	$$\I(G(c(h_1,\ldots,h_K)))\subseteq\bigotimes_{i=1}^{d}Z_{T_{i},T_{i}}=\bigotimes_{i=1}^{d}Z_{\Z^{d},\Z^{d}}.$$
	Since  $\mathbb{E}(f_{i}\vert Z_{\mathbb{Z}^{d},\mathbb{Z}^{d}}(\X))=0$, we have that $\mathbb{E}\Bigl(\bigotimes_{i=1}^{d}f_{i}\Big\vert \bigotimes_{i=1}^{d}Z_{\Z^{d},\Z^{d}}\Bigr)=\bigotimes_{i=1}^{d}\mathbb{E}(f_{i}\vert Z_{\Z^{d},\Z^{d}})=0$, and so (\ref{72}) holds whenever $c(h_1,\ldots,h_K)\neq 0$. By  Proposition~\ref{poly}, such tuples $(h_1,\ldots,h_K)$ are of density 1. This proves the claim.

	By the claim, it now suffices to prove (\ref{73}) under the assumption that all $f_{i}$'s are measurable with respect to $Z_{\Z^{d},\Z^{d}}$.
	By Lemma \ref{k}, we can approximate each $f_{i}$ in $L^{2}(\mu)$ by linear combinations of eigenfunctions of $\X$. By multi-linearity, we may assume without loss of generality that each $f_{i}$ is a non-constant eigenfunction of $\X$ satisfying $T_{g}f_{i}=\lambda_{i}(g)f_{i}$ for all $g\in \mathbb{Z}^{d}$ for some group homomorphism $\lambda_{i}\colon\mathbb{Z}^{d}\to\mathbb{S}^{1}$ and that $f_i(x)\neq 0$ $\mu$-a.e $x\in X$.  Then by (\ref{t02}), \[ 0= \prod_{i=1}^{d}\int_{X} f_i d\mu=  \mathbb{E}_{n\in\mathbb{Z}^{L}}\prod_{i=1}^{d}T_{i}^{p(n)}f_{i}=\Bigl(\mathbb{E}_{n\in\mathbb{Z}^{L}}\prod_{i=1}^{d}\lambda_{i}(p(n)e_{i})\Bigr)\prod_{i=1}^{d}f_{i}.\]
	 This implies that $\mathbb{E}_{n\in\mathbb{Z}^{L}}\prod_{i=1}^{d}\lambda_{i}(p(n)e_{i})=0$. So,
	\[\mathbb{E}_{n\in\mathbb{Z}^{L}}\bigotimes_{i=1}^{d}T_{i}^{p(n)}f_{i}=\Bigl(\mathbb{E}_{n\in\mathbb{Z}^{L}}\prod_{i=1}^{d}\lambda_{i}(p(n)e_{i})\Bigr)\bigotimes_{i=1}^{d}f_{i}=0.\] 
	This proves (ii) and finishes the proof.
\end{proof}	

\begin{proof}[Proof of Theorem \ref{t1} assuming  Theorem \ref{T:2}]
	We first prove the ``if'' part. We want to show that
	\begin{equation}\label{t01}
		\mathbb{E}_{n\in\mathbb{Z}^{L}}T^{p(n)}_{1}f_{1}\cdot\ldots\cdot T^{p(n)}_{d}f_{d}=\int_{X}f_{1}\,d\mu\cdot\ldots\cdot \int_{X}f_{d}\,d\mu
	\end{equation}
	for all $f_{1},\dots,f_{d}\in L^{\infty}(\mu)$.
	
Regard $T_{i}$ as $T_{e_{i}}$ and let $p_{1},\dots,p_{d}\colon\mathbb{Z}^{L}\to\mathbb{Z}^{d}$ be polynomials given by $p_{i}(n)=p(n)e_{i}$. 
Suppose that $p(n)=\sum_{0\leq m\leq K}q_{m}n^{m}$ for some $q_{m}\in\mathbb{Q}$.
If $p$ is a constant polynomial, then there is nothing to prove. So we assume that $p$ is not constant, and so $p_{1},\dots,p_{d}$ is a non-degenerate family of polynomials. 
The set $R$ defined in Theorem~\ref{T:2} is $R=\{q_{m}e_{i}, q_{m}(e_{i}-e_{j})\colon 1\leq i,j\leq d, i\neq j,$ $1\leq m\leq K, q_{m}\neq 0\}$. By  assumption (i), all the $T_{i}T^{-1}_{j}$'s (or $T_{e_{i}-e_{j}}$'s), $i\neq j$ are ergodic for $\mu$, and so $(T_{g})_{g\in G(q(e_{i}-e_{j}))}=(T_{g})_{g\in G(e_{i}-e_{j})}$ is ergodic for $\mu$ for all $q\neq 0$. By  assumption (ii), $(T_{i}^{p(n)})_{n\in\mathbb{Z}^{L}}$ is ergodic for $\mu$ for all $1\leq i\leq d$, which implies that $T_{i}$ (or $T_{e_{i}}$) is ergodic for $\mu$.
So, for all $q\neq 0,$  $(T_{g})_{g\in G(qe_{i})}=(T_{g})_{g\in G(e_{i})}$ is ergodic for $\mu$.
Thus the assumptions of  Theorem \ref{T:2} are fulfilled.

 By Theorem \ref{T:2}, we may assume without loss of generality that $\X=Z_{(\mathbb{Z}^{d})^{\times \infty}}(\X)$ (note that conditions (i) and (ii) remain valid when passing to a factor system). 
	Since $(T_{1}^{p(n)}\times\dots\times T_{d}^{p(n)})_{n\in\mathbb{Z}^{L}}$ is ergodic for $\mu^{\otimes d}$, 
	  an argument similar to the one in the proof of \cref{t2} yields the ``if'' part of this theorem.

	To prove the ``only if'' part, assume that (\ref{t01}) holds for all $f_{1},\dots,f_{d}\in L^{\infty}(\mu)$. If $T_{i}T^{-1}_{j}$ is not ergodic for some $1\leq i,j\leq d, i\neq j$, then there exists $g\in L^{\infty}(\mu)$  which is not $\mu$-a.e. equal to a constant such that $T_{i}g=T_{j}g$.
	So 
	\[\mathbb{E}_{n\in\mathbb{Z}^{L}}(T_{i}T^{-1}_{j})^{p(n)}g=\mathbb{E}_{n\in\mathbb{Z}^{L}}g=g\neq \int_{X} gd\mu,\] 
	which implies that  $((T_{i}T^{-1}_{j})^{p(n)})_{n\in\mathbb{Z}^{L}}$ is not ergodic for $\mu$, a contradiction to
	(i) in Proposition~\ref{inverse}. This proves (i).

	Since  (\ref{t01}) holds, (ii) follows directly from the statement (ii) of Proposition~\ref{inverse} and the proof is complete.	
\end{proof}

\subsection{Ingredients to proving Theorem \ref{T:2}}

The rest of the paper is devoted to the proof of Theorem~\ref{T:2}. 
In order to keep track of the coefficients of the polynomials after the iterated van der Corput operations, we introduce the following definition: 
\begin{definition*}
	Let $d\in\mathbb{N}^\ast$ and $V$ denote the collection of all finite subsets $\{u_{1},\dots,u_{k}\} \subseteq \mathbb{Q}^{d}$ containing the zero vector ${\bf 0}$.
	For $R_{1}=\{u_{1},\dots,u_{k}\}\in V$ and $R_{2}\subseteq\mathbb{Q}^{d}$, we say that $R_{1}$ is \emph{equivalent} to $R_{2}$ (denoted as $R_{1}\sim R_{2}$) if  there exists $1\leq i\leq k$
	such that $R_{2}=\{-ru_{i},r(u_{j}-u_{i})\colon 1\leq j\leq k\}$ for some $r\in\mathbb{Q}\backslash\{0\}.$
	Note that $R_1 \sim R_2$ implies that $R_1$ and $R_2$ have the same cardinality.\footnote{ Note that ${\bf 0}\in R_{2}$ as $r(u_{j}-u_{i})={\bf 0}$ for $j=i.$}
\end{definition*}

\begin{lemma} The relation 
	$\sim$ is an equivalence relation on $V$.
\end{lemma}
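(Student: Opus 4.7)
The plan is to verify reflexivity, symmetry, and transitivity directly, after first simplifying the definition of $\sim$ into an enumeration-free form. The key observation is that since $\mathbf{0}\in R_{1}=\{u_{1},\dots,u_{k}\}$, picking any index $j_{0}$ with $u_{j_{0}}=\mathbf{0}$ gives $r(u_{j_{0}}-u_{i})=-ru_{i}$, so the ``extra'' element $-ru_{i}$ appearing in the definition is already a member of $\{r(u_{j}-u_{i}):1\leq j\leq k\}$. Hence $\sim$ can be rephrased as
\[
R_{1}\sim R_{2}\iff R_{2}=r(R_{1}-u_{i})\coloneqq\{r(u-u_{i}):u\in R_{1}\}\ \text{ for some }u_{i}\in R_{1},\ r\in\mathbb{Q}\setminus\{0\}.
\]
This version no longer depends on any enumeration of $R_{1}$, which makes the axiom checks clean and symmetric.

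Reflexivity follows by taking $u_{i}=\mathbf{0}\in R_{1}$ and $r=1$, so that $r(R_{1}-u_{i})=R_{1}$. For symmetry, assuming $R_{2}=r(R_{1}-u_{i})$, I propose to take $v\coloneqq-ru_{i}=r(\mathbf{0}-u_{i})\in R_{2}$ and scalar $1/r$, since
\[
(1/r)(R_{2}-v)=(1/r)\bigl(r(R_{1}-u_{i})+ru_{i}\bigr)=R_{1},
\]
which yields $R_{2}\sim R_{1}$. For transitivity, assuming in addition that $R_{3}=s(R_{2}-v)$ with $v\in R_{2}$ and $s\in\mathbb{Q}\setminus\{0\}$, I will write $v=r(u-u_{i})$ for the corresponding $u\in R_{1}$ and compute
\[
R_{3}=s\bigl(r(R_{1}-u_{i})-r(u-u_{i})\bigr)=sr(R_{1}-u),
\]
exhibiting $R_{1}\sim R_{3}$ with base point $u\in R_{1}$ and nonzero rational scalar $sr$.

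The only mildly subtle point, which I would emphasize in the write-up, is that the definition of $\sim$ looks asymmetric because of the singled-out summand $-ru_{i}$, but this summand is redundant precisely because every set in $V$ contains $\mathbf{0}$. Once this is recognized, each of the three axioms becomes a one-line affine computation with a nonzero rational scalar, so I do not expect any genuine obstacle; one should however take care to note that the map $u\mapsto r(u-u_{i})$ is a bijection from $R_{1}$ to $R_{2}$, which ensures the cardinality statement in the definition.
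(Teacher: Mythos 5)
Your proof is correct, and it takes a genuinely different route from the paper's. The paper works directly with the enumeration-based definition: it writes $R_{1}=\{u_{1},\dots,u_{k}\}$, $R_{2}=\{v_{1},\dots,v_{k}\}$, $R_{3}=\{w_{1},\dots,w_{k}\}$, and verifies transitivity by a case split on whether the two distinguished indices $i$ and $i'$ coincide, computing each $w_{j}$ explicitly in both cases. Your key move is to notice first that the singled-out element $-ru_{i}$ in the definition is redundant because $\mathbf{0}\in R_{1}$, so $\sim$ collapses to the coordinate-free statement $R_{2}=r(R_{1}-u_{i})$ for some $u_{i}\in R_{1}$ and $r\in\mathbb{Q}\setminus\{0\}$; once this is established, each axiom is a single affine computation with no case analysis, and the injectivity of $u\mapsto r(u-u_{i})$ (for $r\neq 0$) immediately gives the cardinality remark as well. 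Your reformulation is the cleaner of the two: it eliminates the apparent asymmetry of the definition before the verification starts rather than discovering it in the middle of the transitivity argument, and it makes visible that $\sim$ is just the orbit equivalence for the action of $\mathbb{Q}^{\times}\ltimes\mathbb{Q}^{d}$ by affine maps (restricted to base points inside the set).
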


\begin{proof}
	If $R_{1}=\{u_{1},\dots,u_{k}\}$ and $u_{i}=0$, then  $R_{1}=\{-ru_{i},r(u_{j}-u_{i})\colon 1\leq j\leq k\}$ for $r=1,$ and so
	$R_{1}\sim R_{1}$. Suppose that $R_{1}\sim R_{2}$. We may write $R_{1}=\{u_{1},\dots,u_{k}\}$ and $R_{2}=\{v_{1},\dots,v_{k}\}$, where $v_{i}=-ru_{i}$ and $v_{j}=r(u_{j}-u_{i})$ for all $1\leq j\leq k, j\neq i$ for some $1\leq i\leq k$. It follows that $u_i=-(1/r)v_i$ and $u_j=(1/r)(v_j-v_i)$ which means $R_2\sim R_1$.  
	
	Assume now that $R_1\sim R_2$ and  $R_{2}\sim R_{3}$. We may write $R_2$ as above and $R_{3}=\{w_{1},\dots,w_{k}\}$, where $w_{i'}=-r'v_{i'}$ and $w_{j}=r'(v_{j}-v_{i'})$ for all $j\neq i'$ for some $1\leq i'\leq k$. 
	
	If $i=i'$, then $w_{i}=-r'v_{i}=-r'(-ru_{i})=rr'u_{i}$, and $w_{j}=r'(v_{j}-v_{i})=r'r(u_{j}-u_{i})-r'(-ru_{i})=rr'u_{j}$ for all $j\neq i$. So $R_{3}=rr'R_{1}$. This implies that $R_{1}\sim R_3$. 
	
	If $i\neq i'$, then $w_{i}=r'(v_{i}-v_{i'})=r'(-ru_{i})-r'r(u_{i'}-u_{i})=-rr'u_{i'}$, $w_{i'}=-r'v_{i'}=r'r(u_{i}-u_{i'})$, and $w_{j}=r'(v_{j}-v_{i'})=r'r(u_{j}-u_{i})-r'r(u_{i'}-u_{i})=rr'(u_{j}-u_{i'})$ for all $j\neq i, i'$. This implies that $R_{1}\sim R_{3}$ and the result follows.
\end{proof}	

We write $R_{1}\lesssim R_{2}$ for some $R_{1},R_{2}\in V$ if there exists $R_{3}\in V$ such that $R_{2}\sim R_{3}$ and $R_{1}\subseteq R_{3}$. 

Recall that for $\textbf{b}=(b_{1},\dots,b_{L})\in(\mathbb{Q}^{d})^{L},$ $ b_{i}\in\mathbb{Q}^{d}$, we denote $G(\textbf{b})=\text{span}_{\mathbb{Q}}\{b_{1},\dots,b_{L}\}\cap\mathbb{Z}^{d}$  and $G'(\textbf{b})\coloneqq \text{span}_{\mathbb{Z}}\{b_{1},\dots,b_{L}\}$.
The first ingredient we need to prove Theorem \ref{T:2} is an upper bound for the multiple averages in terms of Host-Kra seminorms. The following proposition shows that we can somehow control the coefficients we get in the end of the PET-induction by the initial ones. 

\begin{proposition}[Bounding multiple averages by averaged Host-Kra seminorms]\label{pet}
	Let $d,k,L\in\mathbb{N}^{\ast},$ $p_{1},\dots$, $p_{k}\colon\mathbb{Z}^{L}\to \mathbb{Z}^{d}$ a non-degenerate family of polynomials of degrees at most $K,$ with $p_{i}(n)=\sum_{v\in\mathbb{N}^{L}, \vert v\vert \leq K}b_{i,v}n^{v}$ for some $b_{i,v}\in \mathbb{Q}^{d},$ and $R_{v}\coloneqq\{b_{i,v}\colon 1\leq i\leq k\}\cup\{{\bf 0}\}$.  
	Then
	there exist $t_{1},\dots,t_{k}\in\mathbb{N}^\ast$, $s\in\N$ and polynomials $\c_{i,m}\colon(\mathbb{Z}^{L})^{s}\to(\mathbb{Z}^{d})^{L}, 1\leq i\leq k, 1\leq m\leq t_{i}$ with $\c_{i,m}\not\equiv {\bf 0},$ such that the following hold:
	\begin{itemize}
		\item[(i)] (Control of the coefficients) Each $\c_{i,m}$ is of the form
		\[\c_{i,m}(h_{1},\dots,h_{s})=\sum_{a_{1},\dots,a_{s}\in\mathbb{N}^{L}}h^{a_{1}}_{1}\dots h^{a_{s}}_{s}\cdot \u_{i,m}(a_{1},\dots,a_{s})\]
		for some  $$\u_{i,m}(a_{1},\dots,a_{s})=(u_{i,m,1}(a_{1},\dots,a_{s}),\dots,u_{i,m,L}(a_{1},\dots,a_{s}))\in(\mathbb{Q}^{d})^{L}$$ with all but finitely many terms being zero for each $(i,m).$ In addition, for all $a_{1},\dots,a_{s}\in\mathbb{N}^{L}$ not all equal to {\bf 0} and every $1\leq i\leq k, 1\leq r\leq L$, denoting
		\[U_{i,r}(a_{1},\dots,a_{s})\coloneqq \{u_{i,m,r}(a_{1},\dots,a_{s})\in\mathbb{Q}^{d}\colon 1\leq m\leq t_{i}\}\cup\{{\bf 0}\},\]
		we have that there exists $v\in\mathbb{N}^{L}, \vert v\vert>0$ such that $U_{i,r}(a_{1},\dots,a_{s})\lesssim R_{v}$.
		\item[(ii)] (Control of the average) For every $\mathbb{Z}^{d}$-system $\X=(X,\mathcal{B},\mu, (T_{g})_{g\in\mathbb{Z}^{d}})$ and every $f_{1},\dots,$ $f_{k}\in L^{\infty}(\mu)$ bounded by $1,$ we have that 
		\begin{equation}\label{30}
			\F\vl\Bigl\Vert\mathbb{E}_{n\in I_{N}}\prod_{i=1}^k T_{p_{i}(n)}f_i\Bigr\Vert^{2^{t_0}}_{L^{2}(\mu)}
			\leq C \cdot \min_{1\leq i\leq k}\E^{\square}_{h_{1},\dots,h_{s}\in\mathbb{Z}^{L}}\Vert f_{i}\Vert_{( G'(\c_{i,m}(h_{1},\dots,h_{s})))_{1\leq m\leq t_{i}}},
		\end{equation}
	\end{itemize}
	where $t_0$ and $C>0$ are constants depending only on $p_{1},\dots,p_{k}$.\footnote{ One can in fact show that $t_0,t_1,\ldots,t_k$ depend only on $d, k, L$ and the highest degree of $p_{1},\dots,p_{k}$. More specifically, $t_0$ can be chosen to be the $\max\{t_1,\ldots,t_k\},$ where $t_i$ is the number of vdC-operations that we have to perform in order our PET tuple to be non-degenerate, standard for $f_i$ and with degree equal to $1.$ }
\end{proposition}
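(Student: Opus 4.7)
The plan is to apply the PET-induction machinery of Theorem~\ref{PET} separately for each $i\in\{1,\ldots,k\}$, while carefully tracking how the coefficients of the polynomial iterates evolve through the vdC-operations. For each $i$ we first construct a non-degenerate PET-tuple $A_i$ that is standard for $f_i$. If $\deg(p_i)=\max_j\deg(p_j)$, then after permuting factors we simply take $A_i=(L,0,k,\g_i,\q_i)$ with $f_i$ paired with $p_i$. If instead $\deg(p_i)<\max_j\deg(p_j)$, we first apply a preliminary vdC-operation $\partial_{i_0}$ with $i_0\ne i$ of maximal degree; non-degeneracy guarantees that $p_i-p_{i_0}$ remains in the resulting tuple, and after reordering we obtain a tuple standard for $f_i$. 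Theorem~\ref{PET} then provides a finite sequence of vdC-operations producing a non-degenerate PET-tuple $\tilde A_i=(L,s,t_i,\g^\ast,\q^\ast)$ of degree $1$ that is still standard for $f_i$.

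Writing the polynomials in $\q^\ast$ as $L_{i,m}(n;h_1,\ldots,h_s)=\c_{i,m}(h_1,\ldots,h_s)\cdot n+d_{i,m}(h_1,\ldots,h_s)$, iterating Proposition~\ref{induction} along the vdC-sequence yields an estimate of the form $S(A_i,2^N)\le 4^{M}\,S(\tilde A_i,1)$, for some $N,M$ depending only on $p_1,\ldots,p_k$. The quantity $S(\tilde A_i,1)$ is an average with linear iterates, and we will bound it by an averaged Host-Kra seminorm in the style of \cite[Proposition~3.1]{Jo}: combining the mean ergodic theorem (Theorem~\ref{erg}) applied to each of the variables $n_1,\ldots,n_L$ with one further round of the van der Corput lemma should give
\[
S(\tilde A_i,1)\le \E^{\square}_{h_1,\ldots,h_s\in\Z^L}\|f_i\|_{(G(\c_{i,m}(h_1,\ldots,h_s)))_{1\le m\le t_i}}.
\]
Extracting the appropriate root and minimizing over $i$ will then produce part~(ii), with $C$ depending only on the original polynomials.

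The main obstacle is the coefficient-control assertion~(i), which is the genuinely new ingredient. The plan is to prove, by induction on the number of vdC-operations performed, that every polynomial appearing in the current PET-tuple, expanded as $\sum_v B_m^{(v)}(h_1,\ldots,h_s)\,n^v$, enjoys the following property: the coefficient of each monomial $h_1^{a_1}\cdots h_s^{a_s}$ inside $B_m^{(v)}$, after projection to its $r$-th coordinate and collected set-wise over $m$, lies in a subset of $\Q^d$ which is $\lesssim R_{v'}$ for some $v'\in\N^L$ with $|v'|>0$. The inductive step rests on inspecting Step~1 of the vdC-operation: subtracting a fixed $q=p_\rho$ from a collection $\{B_m^{(v)}\}_m$ replaces it by $\{B_m^{(v)}-B_\rho^{(v)}\}_m$, which is exactly the ``re-centering'' encoded in the equivalence relation $\sim$ on $V$ (with the scalar $r=1$), so the $\lesssim R_{v'}$ property is preserved; meanwhile the shift $n\mapsto n+h_{s+1}$ contributes, for sub-leading coefficients, $\Q$-linear combinations of higher-order coefficients multiplied by powers of $h_{s+1}$, whose coordinate projections fall into rescaled copies of pre-existing $R_{v'}$-controlled sets and therefore again obey $\lesssim$. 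The groupings in Step~2 and the multiplicative recombinations in Step~3 leave the relevant coefficient sets unchanged. Lemmas~\ref{ag} and~\ref{ag2} will be used to identify the resulting $\Q$-spans and to translate set-level control into the stated constraint on the $\u_{i,m}(a_1,\ldots,a_s)$. This delicate bookkeeping will require a careful case analysis mirroring the one in the proof of Theorem~\ref{PET}, since the exact way in which leading and sub-leading coefficients transform under Step~1 differs from case to case; it is the step we expect to be the main obstacle.
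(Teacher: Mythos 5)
Your overall plan — feed the PET-tuple through Theorem~\ref{PET} to reach degree~$1$, bound the linear case with the Host--Kra-type inequality (Proposition~\ref{P:1}), and track coefficients across vdC-operations via the equivalence relation $\sim$ — is the paper's strategy, and your description of the coefficient bookkeeping (subtraction is ``re-centering'', the shift contributes binomial rescalings, hence $\lesssim R_{v}$ is preserved) is essentially what Proposition~\ref{1234} establishes.

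However, the step you propose when $\deg(p_i)<\max_j\deg(p_j)$ has a genuine gap. You suggest applying a preliminary vdC-operation $\partial_{i_0}$ with $p_{i_0}$ of maximal degree, claiming $p_i-p_{i_0}$ ``remains in the resulting tuple'' paired with $f_i$. But the vdC-operation also produces $q'_{i+\ell}(n;h_1)=p_i(n+h_1)-p_{i_0}(n)$, and $q'_i-q'_{i+\ell}=-(p_i(n+h_1)-p_i(n))$. If $\deg(p_i)=1$, this difference is essentially constant, so Step~2 of the vdC-operation \emph{groups} these two polynomials, and Step~3 replaces the function paired with $q'_i$ by a product of the form $f_i\cdot T_{p''}f_i$. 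The resulting tuple is then not standard, nor even semi-standard, for $f_i$: the function $f_i$ no longer appears alone, and Theorem~\ref{PET} cannot be applied to isolate it. A concrete instance: $p_1(n)=n$, $p_2(n)=n^2$, $d=L=1$. Applying $\partial_2$ produces $q'_1=n-n^2$ and $q'_3=n+h-n^2$ (both paired with $f_1$) which merge since $q'_1-q'_3=-h$; the new tuple is $\{f_1\cdot T_h f_1,\ f_2\}$ with iterates $\{n-n^2,\ 2hn+h^2\}$, and $f_1$ has disappeared.

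The paper avoids this with a ``dimension-increment'' argument (Proposition~\ref{ext}): it replaces $A=(L,0,k,\ldots)$ by $A'=(2L,0,2k-1,\ldots)$ using a doubled variable $(n,n')$, setting $q'_m(n,n')=p_m(n)-p_{i_0}(n')$ and $q'_{m+\ell}(n,n')=p_m(n')-p_{i_0}(n')$, so that $q'_i=p_i(n)-p_{i_0}(n')$ is paired with $f_i$, has maximal degree in $(n,n')$, and cannot be merged with any other $q'_m$. The estimate $S(A,2\kappa)\le S(A',\kappa)$ replaces your factor of~$4^{\kappa}$, and the additional clause~(\ref{321}) in Proposition~\ref{ext} keeps the coefficient sets $R_{\q'}$ tied to the original $R_{\q}$, so the $\lesssim R_v$ property is preserved. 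This is the missing ingredient in your argument; the rest of your outline matches the proof.
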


\begin{remark}
	Note that we allow $s=0$ in Proposition \ref{pet}. In this case, we write 
	$$\c_{i,m}(\emptyset)=\u_{i,m}(\emptyset)=(u_{i,m,1}(\emptyset),\dots,u_{i,m,L}(\emptyset))\in(\mathbb{Z}^{d})^{L}$$ for some $u_{i,m,r}(\emptyset)\in\mathbb{Z}^{d}$, and 
	$$U_{i,r}(\emptyset):=\{u_{i,m,r}(\emptyset)\in\mathbb{Z}^{d}\colon 1\leq m\leq t_{k}\}\cup\{\bold{0}\}.$$
	Moreover, the right hand side of \eqref{30} becomes $\min_{1\leq i\leq k}\Vert f_{i}\Vert_{(G'(\c_{i,m}(\emptyset)))_{1\leq m\leq t_{i}}}$.\footnote{ In this paper, when $s=0$, averages of the form $\E^{\square}_{h_{1},\dots,h_{s}\in\mathbb{Z}^{L}}a(h_{1},\dots,h_{s})$ are understood as the single term $a(\emptyset)$.}
\end{remark}

The second ingredient we need in order to show Theorem~\ref{T:2}  (which is the main novelty of this paper) is to estimate the right hand side of (\ref{30}) using the concatenation theorem.

\begin{proposition}[Bounding averaged Host-Kra seminorms by a single one]\label{pet3}
	Let $p_{1},\dots,p_{k}\colon\mathbb{Z}^{L}\to \mathbb{Z}^{d}$ be a family of polynomials. Suppose that
	there exist $t_{1},\dots,t_{k}\in\mathbb{N}^\ast$,  $s\in\N$ and polynomials $\c_{i,m}\colon(\mathbb{Z}^{L})^{s}\to(\mathbb{Z}^{d})^{L}, 1\leq i\leq k, 1\leq m\leq t_{i},$ with $\c_{i,m}\not\equiv {\bf 0}$  given by
	\begin{equation}\label{34}
		\begin{split}
			\c_{i,m}(h_{1},\dots,h_{s})=\sum_{a_{1},\dots,a_{s}\in\mathbb{N}^{L}}h^{a_{1}}_{1}\dots h^{a_{s}}_{s}\cdot \u_{i,m}(a_{1},\dots,a_{s})
		\end{split}
	\end{equation}
	for some $\u_{i,m}(a_{1},\dots,a_{s})\in(\mathbb{Q}^{d})^{L}$ with all but finitely many terms equal to ${\bf 0}$ for each $(i,m)$ such that the following holds:
	if for every $\mathbb{Z}^{d}$-system $(X,\mathcal{B},\mu, (T_{g})_{g\in\mathbb{Z}^{d}})$ and every $f_{1},\dots,f_{k}\in L^{\infty}(\mu)$ bounded by $1,$ we have that 
	\begin{equation}\label{35}
		\begin{split}
			\F\vl\Bigl\Vert\mathbb{E}_{n\in I_{N}} \prod_{i=1}^k T_{p_{i}(n)}f_i\Bigr\Vert_{L^{2}(\mu)}
			\leq C \cdot \min_{1\leq i\leq k}\E^{\square}_{h_{1},\dots,h_{s}\in\mathbb{Z}^{L}}\Vert f_{i}\Vert_{(G'(\c_{i,m}(h_{1},\dots,h_{s})))_{1\leq m\leq t_{i}}},
		\end{split}
	\end{equation}
	where $C>0$ is a constant depending only on $p_{1},\dots,p_{k}$,
	then letting
	\begin{equation}\nonumber
		\begin{split}
			H_{i,m}=
			\text{span}_{\mathbb{Q}}\{G(\u_{i,m}(a_{1},\dots,a_{s}))\colon a_{1},\dots,a_{s}\in\mathbb{N}^{L}\}\cap\mathbb{Z}^{d},
		\end{split}
	\end{equation}
	we have that
	\begin{equation}\label{36}
		\begin{split}
			\F\vl\Bigl\Vert\mathbb{E}_{n\in I_{N}} T_{p_{1}(n)}f_1\cdot\ldots\cdot T_{p_{k}(n)}f_k\Bigr\Vert_{L^{2}(\mu)}=0 \;\text{ if $\;\min_{1\leq i\leq k}\Vert f_{i}\Vert_{H^{\times\infty}_{i,1},\dots,H^{\times\infty}_{i,t_{i}}}=0$}.
		\end{split}
	\end{equation}
\end{proposition}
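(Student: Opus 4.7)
Without loss of generality take $i=1$ and assume $\|f_{1}\|_{H_{1,1}^{\times\infty},\ldots,H_{1,t_{1}}^{\times\infty}}=0$, which by the convention for infinite-step seminorms and the monotonicity from Lemma~\ref{replacement0}(vi) is equivalent to $\|f_{1}\|_{H_{1,1}^{\times k},\ldots,H_{1,t_{1}}^{\times k}}=0$ for every $k\in\mathbb{N}^{\ast}$. By hypothesis~\eqref{35}, it suffices to prove that the averaged seminorm $\E^{\square}_{h_{1},\dots,h_{s}\in\Z^{L}}\|f_{1}\|_{(G(\c_{1,m}(h_{1},\ldots,h_{s})))_{1\leq m\leq t_{1}}}$ vanishes.

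The principal difficulty is that, for a single tuple $h=(h_{1},\dots,h_{s})$, the group $G(\c_{1,m}(h))$ has $\Q$-dimension at most $L$, whereas $H_{1,m}$ may have strictly larger $\Q$-dimension, so Lemma~\ref{replacement0}(iv) alone does not suffice to replace the $G(\c_{1,m}(h))$'s by the $H_{1,m}$'s inside the seminorm. The remedy is to pass from a single $h$ to a tuple of $h$'s simultaneously. By Lemma~\ref{ag2}, $\mathrm{span}_{\Q}\{G(\c_{1,m}(h))\colon h\in(\Z^{L})^{s}\}=H_{1,m}\otimes\Q$; hence, for every sufficiently large $K\in\mathbb{N}^{\ast}$, one can find tuples $(h^{(1)},\ldots,h^{(K)})$ such that $\sum_{j=1}^{K}G(\c_{1,m}(h^{(j)}))$ has finite index in $H_{1,m}$ for every $m$, and Lemma~\ref{ag} applied to the polynomial rank conditions defining the exceptional set shows that such \emph{good} $K$-tuples form a subset of $((\Z^{L})^{s})^{K}$ of Banach density~$1$.

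The heart of the proof then consists of two steps. First, by iterating the hypothesis~\eqref{35} through standard Cauchy--Schwarz manipulations (or the van der Corput lemma of Lemma~\ref{lemma:iteratedVDC}) combined with the inductive expansion of Host--Kra seminorms in Lemma~\ref{replacement0}(iii), we establish, for every $K\in\mathbb{N}^{\ast}$, a ``stacked'' bound
\[A^{2^{K}}\leq C_{K}\cdot \E^{\square}_{h^{(1)},\ldots,h^{(K)}\in(\Z^{L})^{s}}\|f_{1}\|_{(G(\c_{1,m}(h^{(j)})))_{\substack{1\leq m\leq t_{1}\\1\leq j\leq K}}}^{2^{Kt_{1}}}.\]
Second, invoking the Tao--Ziegler concatenation theorem (Corollary~\ref{ct}) on the characteristic factors $Z_{(G(\c_{1,m}(h^{(j)})))_{m}}$ attached to the different $j$'s, for each good $K$-tuple the stacked Host--Kra factor is controlled by one whose groups are the sums $\sum_{j}G(\c_{1,m_{j}}(h^{(j)}))$ over tuples $(m_{j})\in\{1,\dots,t_{1}\}^{K}$. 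Among these appear the diagonal sums $\sum_{j}G(\c_{1,m}(h^{(j)}))$, which by our choice of good tuples have finite index in $H_{1,m}$; applying Lemma~\ref{replacement0}(iv) to replace each such finite-index sum by $H_{1,m}$ then yields a majorization of the integrand by $\|f_{1}\|_{H_{1,1}^{\times K'},\ldots,H_{1,t_{1}}^{\times K'}}$ for an appropriate $K'=K'(K,t_{1})$. Since the latter quantity vanishes by hypothesis, the entire right-hand side is zero, and hence $A=0$.

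The main obstacle, and the principal novel input, is the second step: translating the Tao--Ziegler concatenation theorem, which is stated as a containment between $\sigma$-algebras, into a quantitative comparison of Host--Kra seminorms that substitutes the many data-dependent groups $G(\c_{1,m}(h^{(j)}))$ with the fixed target groups $H_{1,m}$. This translation, together with the density~$1$ verification of the finite-index conditions for generic tuples, is the key technical contribution of the present proposition and the essential use of the Tao--Ziegler concatenation machinery~\cite{TZ}.
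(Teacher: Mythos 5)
There is a genuine gap in your second step, and it stems from a containment-direction issue that the paper's argument is carefully designed to avoid.

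Your ``stacked bound'' $A^{2^{K}}\leq C_{K}\cdot \E^{\square}_{h^{(1)},\ldots,h^{(K)}}\|f_{1}\|_{(G(\c_{1,m}(h^{(j)})))_{m,j}}^{2^{Kt_1}}$ is in fact obtainable (via Lemma~\ref{replacement0}(vi), adding groups only increases a seminorm, so one may pad the $t_{1}$-step seminorm from~\eqref{35} into a $Kt_{1}$-step one and then average over the new parameters). The problem is with what comes next. You want the right-hand side to vanish under the hypothesis $\|f_{1}\|_{H_{1,1}^{\times\infty},\ldots,H_{1,t_{1}}^{\times\infty}}=0$, and you propose to achieve this by applying Corollary~\ref{ct} to the factors $Z_{(G(\c_{1,m}(h^{(j)})))_{m}}$ over the different $j$'s and then replacing the finite-index sums $\sum_{j}G(\c_{1,m}(h^{(j)}))$ by $H_{1,m}$. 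But Corollary~\ref{ct} produces a containment for the \emph{intersection} $\bigcap_{j}Z_{(G(\c_{1,m}(h^{(j)})))_{m}}$, whereas your stacked bound involves the single ``long'' factor $Z_{(G(\c_{1,m}(h^{(j)})))_{m,j}}$. By Lemma~\ref{replacement0}(vii) this long factor \emph{contains} the join $\bigvee_{j}Z_{(G(\c_{1,m}(h^{(j)})))_{m}}$, hence a fortiori contains the intersection, so knowing that the intersection sits inside $Z_{H_{1,1}^{\times\infty},\ldots}$ gives you nothing about the long factor. Equivalently in seminorm language: each $G(\c_{1,m}(h^{(j)}))$ is a \emph{subgroup} of $H_{1,m}$, so by inequality~\eqref{8} (i.e.\ Lemma~\ref{replacement0}(v)) one has $\|f_{1}\|_{(G(\c_{1,m}(h^{(j)})))_{m,j}}\geq\|f_{1}\|_{(H_{1,m})_{m,j}}$ --- the seminorm you need to control is the \emph{larger} one, and your hypothesis only kills the smaller one. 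Your claim that ``the stacked Host--Kra factor is controlled by one whose groups are the sums'' has the containment reversed.

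The paper's proof avoids this by never reducing to a single large seminorm. Instead it works at the level of $\sigma$-algebras and conditional expectations: Lemma~\ref{pet2} (a corollary of~\eqref{35}) permits replacing $f_{1}$ by $\mathbb{E}(f_{1}\vert W_{1,J})$ for any density-$1$ set $J$ without altering the multiple average, where $W_{1,J}=\bigvee_{\h\in J}Z_{(G(\c_{1,m}(\h)))_{m}}$. Iterating this replacement (together with $L^{1}$-approximations by finite $J'_{u}$ and the density-$1$ ``admissibility'' conditions) yields a conditional expectation onto a genuine \emph{intersection} $W_{1,J'_{1}}\cap\cdots\cap W_{1,J'_{dt_{1}}}$, and it is precisely this intersection to which Lemma~\ref{replacement0}(vii) and then Corollary~\ref{ct} can be applied, producing groups that are sums $\sum_{u}G(\c_{1,m}(\h_{u}))$ of full rank, whence (by Lemma~\ref{replacement0}(iv) and a pigeonhole over the $dt_{1}$ indices) the intersection lands inside $Z_{H_{1,1}^{\times\infty},\ldots,H_{1,t_{1}}^{\times\infty}}$. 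The intersection structure is the load-bearing ingredient; your stacked-seminorm route never produces one, and that is where the argument breaks.
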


We now use Propositions~\ref{pet} and \ref{pet3} to show Theorem~\ref{T:2},
and leave the proofs of Propositions~\ref{pet} and \ref{pet3} to Sections~\ref{s:pet} and \ref{s:pet3} respectively. 

\begin{proof}[Proof of Theorem \ref{T:2} assuming Propositions \ref{pet} and \ref{pet3}]
Let $R$ be the set defined in Theorem \ref{T:2}.
	We can assume without loss of generality that $\mathbb{E}(f_1\vert Z_{\{G(r)^{\times \infty}\}_{r\in R}})=0$.
	Suppose that $p_{i}(n)=\sum_{v\in\mathbb{N}^{L},\vert v\vert\leq K}b_{i,v}n^{v}$ for some $b_{i,v}\in \mathbb{Q}^{d}$ and denote $R_{v}=\{b_{i,v}\colon 1\leq i\leq k\}\cup\{{\bf 0}\}$ as in Proposition~\ref{pet}. By the same proposition,
	there exist $t_{1},\dots,t_{k}\in\mathbb{N}^\ast,$ $s\in\mathbb{N}$\footnote{ We actually address the $s\in\mathbb{N}^\ast$ case for aesthetic reasons here, as the $s=0$ case follows analogously.} and  polynomials $\c_{i,m}\colon(\mathbb{Z}^{L})^{s}\to(\mathbb{Z}^{d})^{L}, 1\leq i\leq k, 1\leq m\leq t_{i},$ with $\c_{i,m}\not\equiv {\bf 0}$
	given by
	$$\c_{i,m}(h_{1},\dots,h_{s})=\sum_{a_{1},\dots,a_{s}\in\mathbb{N}^{L}}h^{a_{1}}_{1}\dots h^{a_{s}}_{s}\cdot \u_{i,m}(a_{1},\dots,a_{s})$$
	for some  $\u_{i,m}(a_{1},\dots,a_{s})\in(\mathbb{Q}^{d})^{L}$ with all but finitely many terms equal to ${\bf 0}$ for each $(i,m)$ (and satisfying the additional assumptions given by Proposition~\ref{pet}), such that (\ref{30}) holds. Let \begin{equation}\nonumber
		\begin{split}
			H_{i,m}=\text{span}_{\mathbb{Q}}\{G(\u_{i,m}(a_{1},\dots,a_{s}))\colon a_{1},\dots,a_{s}\in\mathbb{N}^{L}\}\cap\mathbb{Z}^{d}.
		\end{split}
	\end{equation}
	By  Proposition~\ref{pet3}, 
	\begin{equation}\label{366}
		\F\vl\Bigl\Vert\mathbb{E}_{n\in I_{N}} T_{p_{1}(n)}f_1\cdot\ldots\cdot T_{p_{k}(n)}f_k\Bigr\Vert_{L^{2}(\mu)}=0\;\text{ if $\;\min_{1\leq i\leq k}\Vert f_{i}\Vert_{H^{\times\infty}_{i,1},\dots,H^{\times\infty}_{i,t_{i}}}=0$.} \footnote{ Note that we have in fact proved the following stronger version of (\ref{12}):
			\[\mathbb{E}_{n\in\mathbb{Z}^{L}}  T_{p_{1}(n)}f_1\cdot\ldots\cdot T_{p_{k}(n)}f_k=0\; \text{ if $\;\mathbb{E}(f_i\vert Z_{\{H_{i,j}\}_{1\leq j\leq t_{i}}^{\times \infty}})=0$ for some $1\leq i\leq k$.}
			\]}
	\end{equation}
	On the other hand, by the description of $\c_{i,m}$, writing $$\u_{i,m}(a_{1},\dots,a_{s})=(u_{i,m,1}(a_{1},\dots,a_{s}),\dots,u_{i,m,L}(a_{1},\dots,a_{s})),$$ each $u_{i,m,j}(a_{1},\dots,a_{s})$ belongs to the set $U_{i,r}$, which is contained in a set equivalent to one of $R_{v}, v\in\mathbb{N}^{L}, 0< \vert v\vert\leq k$. By the definition of $R$, $u_{i,m,j}(a_{1},\dots,a_{s})=qr$ for some $q\in\mathbb{Q}$ and $r\in R$. Since $\c_{1,m}\not\equiv {\bf 0},$ there exists $q_{m}r_{m}\in H_{1,m}\backslash\{\bold{0}\}$ for some $q_{m}\in\mathbb{Q}$ and $r_{m}\in R$ for all $1\leq m\leq t_{1}$. So $G(r_{m})$ is a subgroup of $H_{1,m}$. 	By Lemma~\ref{replacement0}, we have that 
	\[Z_{H^{\times\infty}_{1,1},\dots,H^{\times\infty}_{1,t_{1}}}
	\subseteq Z_{G(r_{1})^{\times\infty},\dots,G(r_{t_{1}})^{\times\infty}}
	\subseteq Z_{\{G(r)^{\times \infty}\}_{r\in R}}.\]
	Since $\mathbb{E}(f_1\vert Z_{\{G(r)^{\times \infty}\}_{r\in R}})=0$, we have that $\mathbb{E}(f_1\vert Z_{H^{\times\infty}_{1,1},\dots,H^{\times\infty}_{1,t_{1}}})=0,$
	meaning that the second term of (\ref{366}) is 0, which implies that (\ref{12}) equals 0.

	If in addition, $(T_g)_{g\in G(r)}$ is assumed to be ergodic for all $r\in R$, then by Corollary~\ref{ppp}, we have that
	$Z_{\{G(r)^{\times \infty}\}_{r\in R}}=Z_{(\mathbb{Z}^{d})^{\times\infty}}$
	and the proof is complete.
\end{proof}

\section{Proof of Proposition~\ref{pet}}\label{s:pet}

Our strategy to show (\ref{30}) in \cref{pet} is the following: We first fix the functions $f_{i}$ on the right hand side of (\ref{30}). By a ``dimension-increment'' argument (see Proposition~\ref{ext} below), for a fixed $i,$ we may assume that $p_{i}$ has the highest degree among $p_{1},\ldots,p_{k}$, making the PET-tuple standard for $f_{i}$. Then, Theorem~\ref{PET} allows us to control the left hand side of (\ref{30}) by a PET-tuple of degree 1 which is also standard for $f_{i}$. Finally, a Host-Kra-type inequality for linear polynomials (see Proposition~\ref{P:1}) implies that (\ref{30}) holds for some polynomials $\c_{i,m}$. Up to this point, the method we use is similar to the one used in \cite{Jo} and \cite{L9} (the main difference is that we have a more explicit upper bound for $\vl\Bigl\Vert\mathbb{E}_{n\in I_{N}} T_{p_{1}(n)} f_1\cdot\ldots\cdot T_{p_{k}(n)} f_k\Bigr\Vert_{L^2(\mu)}$ in Proposition~\ref{pet}). Our innovation is that in order for the equation (\ref{30}) to be useful for our purposes, we need a better description of the functions $\c_{i,m}$, which is the content of part (i) of \cref{pet}.

We start with the linear case of Proposition \ref{pet} (the special case $L=1$ was first proved in \cite[Proposition~1]{Ho}).

\begin{proposition}[Host-Kra inequality for linear $\mathbb{Z}^{L}$-averages]\label{P:1} Let $d,k,L\in\mathbb{N}^{\ast}$ with $k\geq 2$, $(X,\mathcal{B},\mu,(T_{g})_{g\in\mathbb{Z}^{d}})$ a $\mathbb{Z}^{d}$-system and $p_{1},\dots,p_{k}\colon\mathbb{Z}^{L}\to\mathbb{Z}^{d}$ essentially distinct and essentially non-constant polynomials of degree $1.$ Suppose that 
	$p_{i}(n)=\u_{i}\cdot n+v_{i}$ for some $\u_{i}\in(\mathbb{Z}^{d})^{L}, v_{i}\in\mathbb{Z}^{d}$ for all $1\leq i\leq k$.\footnote{ Here for $\u=(u_{1},\dots,u_{L})\in(\Z^{d})^{L}$ and $n=(n_{1},\dots,n_{L})\in\N^{L}$, $\u\cdot n$ denotes $n_{1}u_{1}+\dots+n_{L}u_{L}\in\mathbb{Z}^{d}$.} 	
	Then for every $f_1,\ldots,f_k\in L^\infty(\mu)$ bounded by $1,$  we have that 
	\[\F\vl\Bigl\Vert\ei T_{p_{1}(n)} f_1\cdot\ldots\cdot T_{p_{k}(n)} f_k\Bigr\Vert_{L^2(\mu)}\leq C\cdot \min_{1\leq i\leq k}\Vert f_i\Vert_{G'(-\u_{i}), \{G'(\u_{j}-\u_{i})\}_{1\leq j\leq k, j\neq i}},\]
	where $C$ is a constant only depending on $k$. 	
	Moreover, writing $\u_{i}=(u_{i,1},\dots,u_{i,L}), u_{i,j}\in\mathbb{Z}^{d}$ and $R_{e_{r}}=\{u_{i,r}\colon 1\leq i\leq k\}\cup\{{\bf{0}}\}$, the set
	\[U_{i,r}(\emptyset)=\{-u_{i,r}, u_{j,r}-u_{i,r}\colon 1\leq j\leq k\}\]
	is equivalent to $R_{e_{r}}$ for all $1\leq i\leq k$.\footnote{
	It is not hard to verify that the set $R_{e_{r}}$ coincides with the sets $R_{v}$ defined in  Proposition~\ref{pet}. Setting $\bold{c_{i,i}}(\emptyset)=-\bold{u}_{i}$, $\bold{c_{i,j}}(\emptyset)=\bold{u}_{j}-\bold{u}_{i}$ for $1\leq j\leq k, j\neq i$, we see that Proposition \ref{P:1} is indeed a special case of Proposition \ref{pet} when $K=1$ and $s=0$.}
\end{proposition}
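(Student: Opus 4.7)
The plan is to proceed by induction on $k$. By relabeling, it suffices to establish the bound with the minimum on the right-hand side replaced by a fixed index, which we take to be $i=k$; the statement with the minimum then follows by symmetry.

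For the base case $k=1$, viewing $n\mapsto T_{p_{1}(n)}=T_{\u_{1}\cdot n+v_{1}}$ as a $\Z^{L}$-action, the mean ergodic theorem (\cref{erg}) gives that $\ei T_{p_{1}(n)}f_{1}$ converges in $L^{2}(\mu)$ to $T_{v_{1}}\mathbb{E}(f_{1}\vert \I(H))$, where $H=\text{span}_{\Z}\{u_{1,1},\ldots,u_{1,L}\}$ is a finite-index subgroup of $G(\u_{1})$. Invariance under a finite-index subgroup lifts to the full group, so $\I(H)=\I(G(\u_{1}))$, and the $L^{2}$ norm of the limit equals $\|f_{1}\|_{G(\u_{1})}=\|f_{1}\|_{G(-\u_{1})}$, giving the desired bound.

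For the inductive step, assume the result holds for $k-1$. Applying \cref{lemma:iteratedVDC} (with $s=0$, $\kappa=1$) to $a(n)=\prod_{i=1}^{k}T_{p_{i}(n)}f_{i}$ yields
\[\F\varlimsup_{N\to\infty}\Bigl\|\ei\prod_{i=1}^{k}T_{p_{i}(n)}f_{i}\Bigr\|_{L^{2}(\mu)}^{2}\leq 4\,\E^{\square}_{h\in\Z^{L}}\F\varlimsup_{N\to\infty}\Bigl|\ei\int \prod_{i=1}^{k}T_{p_{i}(n)}(T_{\u_{i}\cdot h}f_{i}\cdot \overline{f}_{i})\,d\mu\Bigr|.\]
Setting $F_{i,h}\coloneqq T_{\u_{i}\cdot h}f_{i}\cdot \overline{f}_{i}$ and $q_{i}(n)\coloneqq p_{i}(n)-p_{1}(n)$, applying the measure-preserving change of variables $T_{-p_{1}(n)}$ inside the integral, and using Cauchy--Schwarz together with $\|F_{1,h}\|_{L^{2}}\leq 1$, the right-hand side is bounded above by
\[4\,\E^{\square}_{h\in\Z^{L}}\F\varlimsup_{N\to\infty}\Bigl\|\ei\prod_{i=2}^{k}T_{q_{i}(n)}F_{i,h}\Bigr\|_{L^{2}(\mu)}.\]
The polynomials $q_{2},\ldots,q_{k}$ are linear, essentially non-constant and pairwise essentially distinct (inherited from the $p_{i}$'s), with leading coefficients $\u_{i}-\u_{1}$. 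By the induction hypothesis applied with distinguished index $i=k$, the inner $L^{2}$-norm is bounded by $C_{k-1}\|F_{k,h}\|_{G(\u_{1}-\u_{k}),\,(G(\u_{j}-\u_{k}))_{2\leq j\leq k-1}}$. Raising the entire inequality to the power $2^{k-1}$ and using Jensen to push the exponent inside the $h$-average, then applying the $T_{\u_{k}\cdot h}$-invariance of the seminorm to rewrite $\|T_{\u_{k}\cdot h}f_{k}\cdot \overline{f}_{k}\|=\|f_{k}\cdot T_{-\u_{k}\cdot h}\overline{f}_{k}\|$, and finally invoking \cref{replacement0}(iii), one identifies the resulting averaged seminorm with a constant multiple of $\|f_{k}\|^{2^{k}}_{G(-\u_{k}),\,(G(\u_{j}-\u_{k}))_{1\leq j\leq k-1}}$, closing the induction.

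The last claim $U_{i,r}(\emptyset)\sim R_{e_{r}}$ is immediate from the definition of $\sim$: in the definition, take the distinguished element to be $u_{i,r}\in R_{e_{r}}$ and the scalar equal to $1$; the resulting set $\{u_{j,r}-u_{i,r}\colon u_{j,r}\in R_{e_{r}}\}$ coincides with $U_{i,r}(\emptyset)$ (since the element $\mathbf{0}\in R_{e_{r}}$ contributes $-u_{i,r}$). The main technical obstacle in the inductive step is the identification between the $\Z^{L}$ box-average $\E^{\square}_{h\in\Z^{L}}$, under which $\u_{k}\cdot h$ ranges only over the finite-index subgroup $\text{span}_{\Z}\{u_{k,1},\ldots,u_{k,L}\}\leq G(\u_{k})$, and the $G(\u_{k})$-average appearing in the recursive formula of \cref{replacement0}(iii); this passage requires invoking \cref{replacement0}(iv) to replace the subgroup with $G(\u_{k})$, together with careful bookkeeping of how $\Z^{L}$ box averages push forward onto this subgroup.
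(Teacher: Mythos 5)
Your proof follows essentially the same inductive scheme as the paper's: the base case via the mean ergodic theorem, and the inductive step via van der Corput, Cauchy--Schwarz, and the recursive identity in Lemma~\ref{replacement0}(iii). The differences are cosmetic --- you subtract $p_1$ and distinguish index $k$ in the induction hypothesis (the paper subtracts $p_k$ and distinguishes index $1$), and you apply the van der Corput lemma with $\kappa=1$ followed by Jensen rather than directly with $\kappa=2^{k-1}$.

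One caution: your assertion in the base case that ``invariance under a finite-index subgroup lifts to the full group, so $\mathcal{I}(H)=\mathcal{I}(G(\u_1))$'' is false as a blanket statement about invariant $\sigma$-algebras; the paper's own remark after Lemma~\ref{replacement0} points out that $Z_{\mathbb{Z}}\neq Z_{2\mathbb{Z}}$ can occur, precisely because the finite-index replacement property (iv) fails at seminorm level $1$. What the mean ergodic theorem actually gives is $\Vert\mathbb{E}_{n\in\mathbb{Z}^{L}} T_{p_1(n)}f_1\Vert_{L^2}=\Vert f_1\Vert_H$ with $H=\sum_j\mathbb{Z}u_{1,j}\leq G(\u_1)$ a finite-index subgroup, and $\Vert f_1\Vert_H$ is not in general dominated by $\Vert f_1\Vert_{G(\u_1)}$. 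The same identification $\mathcal{I}(u_{1,1})\cap\cdots\cap\mathcal{I}(u_{1,L})=\mathcal{I}(G(\u_1))$ appears implicitly in the paper's own base case, so this is a subtlety you have inherited rather than introduced; you do, to your credit, explicitly flag the related difficulty of converting $\mathbb{Z}^{L}$-box averages into subgroup averages in the inductive step, which the paper glosses over.
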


\begin{proof}
We first assume that $k=2$.
	Then, by Lemma~\ref{lemma:iteratedVDC}, the Cauchy-Schwarz inequality, and using the fact that $p_{1},p_{2}$ are of degree $1,$ we have that
	\begin{equation}\nonumber
	\begin{split}
	& \F\vl\Bigl\Vert\ei T_{p_{1}(n)} f_1\cdot T_{p_{2}(n)} f_k\Bigr\Vert_{L^2(\mu)}^{4}
	\\ & \leq  16 \E^{\square}_{h\in\mathbb{Z}^{L}}\F\vl\Bigl\vert\ei\int_{X} \prod_{i=1}^2T_{p_{i}(n)}f_{i}\cdot \prod_{i=1}^2 T_{p_{i}(n+h)}f_{i}\,d\mu\Bigr\vert^{2}
	\\&=  16\E^{\square}_{h\in\mathbb{Z}^{L}}\F\vl\Bigl\vert\int_{X}f_\ast\cdot f_2\cdot T_{p_{2}(h)}f_{2}\cdot\ei T_{p_{1}(n)-p_{2}(n)}(f_{1}\cdot T_{p_{1}(h)}f_{1})\,d\mu\Bigr\vert^{2}
	\\&\leq  16 \E^{\square}_{h\in\mathbb{Z}^{L}}\F\vl\Bigl\Vert\ei T_{p_{1}(n)-p_{2}(n)}(f_{1}\cdot T_{p_{1}(h)}f_{1})\Bigr\Vert_{L^2(\mu)}^{2},
	\end{split}
	\end{equation}
	where $f_\ast=T_{-v_1}f_1\cdot T_{-v_2}f_2$ captures the constant terms which are removed with the use of Cauchy-Schwarz inequality in the next step, as $p_i(n+h)=p_i(n)+p_i(h)-v_i.$

		Writing $p_{1}(n)-p_{2}(n)=\u'_{1}\cdot n+(v_{1}-v_{2})$, where $\u'_{1}\coloneqq \u_{1}-\u_{2}$, using Lemma~\ref{replacement0} (iii) and (iv), we have
	\begin{equation}\nonumber
	\begin{split}
	&16\E^{\square}_{h\in\mathbb{Z}^{L}}\F\vl\Bigl\Vert\ei T_{p_{1}(n)-p_{2}(n)}(f_{1}\cdot T_{p_{1}(h)}f_{1})\Bigr\Vert_{L^2(\mu)}^{2}
	\\& =  16\cdot\E^{\square}_{h\in\mathbb{Z}^{L}}\Bigl\Vert f_{1}\cdot T_{p_{1}(h)}f_{1}\Bigr\Vert_{G'(\u'_{1})}^{2}
	\\ & =  16\cdot\Vert f_1\Vert_{G'(-\u_{1}), G'(\u'_{1})}^{4}
	\\ & =  16\cdot\Vert f_1\Vert_{G'(-\u_{1}), G'(\u_{2}-\u_{1})}^{4}.
	\end{split}
	\end{equation}

	Suppose now that the conclusion holds for $k-1$ for some $k\geq 3$. Then, by Lemma~\ref{lemma:iteratedVDC}, the Cauchy-Schwarz inequality, and using the fact that $p_{1},\dots,p_{k}$ are of degree $1,$ we have that
	\begin{equation}\nonumber
	\begin{split}
	& \F\vl\Bigl\Vert\ei T_{p_{1}(n)} f_1\cdot\ldots\cdot T_{p_{k}(n)} f_k\Bigr\Vert_{L^2(\mu)}^{2^{k}}
	\\ & \leq  4^{2^{k-1}} \E^{\square}_{h\in\mathbb{Z}^{L}}\F\vl\Bigl\vert\ei\int_{X} \prod_{i=1}^kT_{p_{i}(n)}f_{i}\cdot \prod_{i=1}^k T_{p_{i}(n+h)}f_{i}\,d\mu\Bigr\vert^{2^{k-1}}
	\\&=  4^{2^{k-1}}\E^{\square}_{h\in\mathbb{Z}^{L}}\F\vl\Bigl\vert\int_{X}f_\ast\cdot f_k\cdot T_{p_{k}(h)}f_{k}\cdot\ei\prod_{i=1}^{k-1}T_{p_{i}(n)-p_{k}(n)}(f_{i}\cdot T_{p_{i}(h)}f_{i})\,d\mu\Bigr\vert^{2^{k-1}}
	\\&\leq  4^{2^{k-1}} \E^{\square}_{h\in\mathbb{Z}^{L}}\F\vl\Bigl\Vert\ei\prod_{i=1}^{k-1}T_{p_{i}(n)-p_{k}(n)}(f_{i}\cdot T_{p_{i}(h)}f_{i})\Bigr\Vert_{L^2(\mu)}^{2^{k-1}},
	\end{split}
	\end{equation}
	where $f_\ast=T_{-v_1}f_1\cdot\ldots\cdot T_{-v_k}f_k$ captures the constant terms which are removed with the use of Cauchy-Schwarz inequality in the next step, as $p_i(n+h)=p_i(n)+p_i(h)-v_i.$

	Note that $p_{i}(n)-p_{k}(n)=\u'_{i}\cdot n+(v_{i}-v_{k})$, where $\u'_{i}\coloneqq \u_{i}-\u_{k}$. By the induction hypothesis, there is a constant $C'$ depending  only on $k$  such that
	\begin{equation}\nonumber
	\begin{split}
	&	4^{2^{k-1}}\E^{\square}_{h\in\mathbb{Z}^{L}}\F\vl\Bigl\Vert\ei\prod_{i=1}^{k-1}T_{p_{i}(n)-p_{k}(n)}(f_{i}\cdot T_{p_{i}(h)}f_{i})\Bigr\Vert_{L^2(\mu)}^{2^{k-1}}
	\\& \leq  4^{2^{k-1}}C'\cdot\E^{\square}_{h\in\mathbb{Z}^{L}}\Bigl\Vert f_{1}\cdot T_{p_{1}(h)}f_{1}\Bigr\Vert_{G'(-\u'_{1}),\{G'(\u'_{1}-\u'_{j})\}_{2\leq j\leq k-1}}^{2^{k-1}}
	\\& =  C\cdot\E^{\square}_{h\in\mathbb{Z}^{L}}\Bigl\Vert f_{1}\cdot T_{p_{1}(h)}f_{1}\Bigr\Vert_{G'(\u_{k}-\u_{1}),\{G'(\u_{1}-\u_{j})\}_{2\leq j\leq k-1}}^{2^{k-1}}
	\\ & =  C\cdot\E^{\square}_{h\in\mathbb{Z}^{L}}\Bigl\Vert f_{1}\cdot T_{p_{1}(h)}f_{1}\Bigr\Vert_{\{G'(\u_{1}-\u_{j})\}_{2\leq j\leq k}}^{2^{k-1}}\\
	&=  C\cdot\Vert f_1\Vert_{G'(-\u_{1}), \{G'(\u_{j}-\u_{1})\}_{2\leq j\leq k}}^{2^{k}},
	\end{split}
	\end{equation}	
	where $C=4^{2^{k-1}}C'$ and  we used Lemma~\ref{replacement0} (iii) in the last equality. It is clear that the constant $C$ depends only on $k$.
	 By symmetry, 
	\[\F\vl\Bigl\Vert\ei T_{p_{1}(n)} f_1\cdot\ldots\cdot T_{p_{k}(n)} f_k\Bigr\Vert_{L^2(\mu)}\leq  C\cdot \min_{1\leq i\leq k}\Vert f_i\Vert_{G'(-\u_{i}), \{G'(\u_{j}-\u_{i})\}_{1\leq j\leq k, j\neq i}}\] and the claim follows. 
\end{proof}

Before proving the general case of Proposition~\ref{pet}, we continue with some additional computations for our Example~\ref{Eg:1}.

\noindent {\bf Second part of computations for Example~\ref{Eg:1}}: Recall that we are dealing with the case $(T^{n^{2}+n}_{1},T^{n^{2}}_{2})$, with the PET-tuple
\[A=(1,0,2,(f_{1},f_{2}),(p_{1}, p_{2})),\] 
where $p_{1}(n)=(n^{2}+n,0)=(n^{2}+n)e_{1}, p_{2}(n)=(0,n^{2})=n^{2}e_{2},$ and  $e_{1}=(1,0), e_{2}=(0,1)$ and $e=e_1-e_2$.
In this case, $L=1$ and $d=2$, $R_{1}=\{e_{1},{\bf 0}\}$, $R_{2}=\{e_{1},e_{2},{\bf 0}\}$ and $R_{v}=\{{\bf 0}\}$ for all $v>2$. Take $s=3$.

By the first part of computations of Example~\ref{Eg:1}, isolating $f_1,$ we have that
$\partial_{2}\partial_{3}\partial_{2} A=(3,7, (f_{1},\dots,f_{1}),$ $ \p_{3}),$
where the tuple $\p_{3}=(q_{1},\dots,q_{7})$ essentially equals to 
$$(-2h_{1}ne_{1},2h_{2}ne-2h_{1}e_{1},2h_{2}ne,2h_{3}ne-2h_{1}ne_{1},2h_{3}ne,2(h_{2}+h_{3})ne-2h_{1}ne_{1},2(h_{2}+h_{3})ne).$$	By Propositions \ref{P:1}, \ref{induction} and Lemma \ref{replacement0} (iv) and the fact that Host-Kra seminorms are $T_{g}$-invariant, we have that
\begin{equation}
\begin{split}
& \F\vl\Bigl\Vert\mathbb{E}_{n\in I_{N}} T^{n^{2}+n}_{1}f_1\cdot T^{n^{2}}_{2}f_2 \Bigr\Vert^{8}_{L^{2}(\mu)}
=  S(A,2^3) 
\leq C \cdot  S(\partial_{2}\partial_{3}\partial_{2} A,1)
\\& \quad\quad\quad\quad\quad\quad\quad\quad\quad\quad\quad\quad\quad\quad \leq C \cdot  \overline{\mathbb{E}}^{\square}_{h_{1},h_{2},h_{3}\in\mathbb{Z}} \Vert f_{1}\Vert_{G'(\c_{1,1}(h_{1},h_{2},h_{3})),\dots, G'(\c_{1,7}(h_{1},h_{2},h_{3}))},
\end{split}
\label{101}
\end{equation}
where 
$\c_{1,1}(h_{1},h_{2},h_{3})=-2h_{1}e_{1},$ $\c_{1,2}(h_{1},h_{2},h_{3})=2h_{2}e,$ $\c_{1,3}(h_{1},h_{2},h_{3})=-2h_{1}e_{1}+2h_{2}e$, $\c_{1,4}(h_{1},h_{2},h_{3})$ $=2h_{3}e$, $\c_{1,5}(h_{1},$ $h_{2},h_{3})=-2h_{1}e_{1}+2h_{3}e$, $\c_{1,6}(h_{1},h_{2},h_{3})=2(h_{2}+h_{3})e$, $\c_{1,7}(h_{1},h_{2},$ $h_{3})=-2h_{1}e_{1}+2(h_{2}+h_{3})e$.
 This verifies part (ii) of Proposition~\ref{pet} for $i=1$.
Moreover, using the notation in Proposition~\ref{pet}, we have that
\begin{eqnarray*}
	U_{1,1}(1,0,0) & = &\{-2e_{1},{\bf 0},-2e_{1},{\bf 0},-2e_{1},{\bf 0},-2e_{1}\}=\{-2e_{1},{\bf 0}\}\sim R_{1},
	\\ U_{1,1}(0,1,0) &= &\{{\bf 0},2e,2e,{\bf 0},{\bf 0},2e,2e\}=\{2e,{\bf 0}\}\subseteq\{2e,-2e_{2},{\bf 0}\}\sim R_{2},
	\\U_{1,1}(0,0,1) & = &\{{\bf 0},{\bf 0},{\bf 0},2e,2e,2e,2e\}=\{2e,{\bf 0}\}\subseteq\{2e,-2e_{2},{\bf 0}\}\sim R_{2}.
\end{eqnarray*}
This verifies part (i) of Proposition~\ref{pet} for $i=1$.

Similarly, by
isolating $f_2,$ we have that 
$\partial_{1}\partial_{2}\partial_{1} A=(3,7, (f_{2},\dots,f_{2}), \p_{3}),$
where the tuple $\p_{3}$ essentially equals to 

\noindent $(2h_{1}ne_{2},-2h_{2}ne,-2h_{2}ne+2h_{1}ne_{2},-2h_{3}ne,
-2h_{3}ne+2h_{1}ne_{2},-2(h_{2}+h_{3})ne,-2(h_{2}+h_{3})ne+2h_{1}ne_{2}).$
Analogously to \eqref{101}, we have 
\begin{equation}
\begin{split}
&\F\vl\Bigl\Vert\mathbb{E}_{n\in I_{N}} T^{n^{2}+n}_{1}f_1\cdot T^{n^{2}}_{2}f_2 \Bigr\Vert^{8}_{L^{2}(\mu)}
 =S(A,2^{3})\leq C\cdot S(\partial_{1}\partial_{2}\partial_{1} A,1)
\\ & \quad\quad\quad\quad\quad\quad\quad\quad\quad\quad\quad\quad\quad\quad \leq C\cdot \overline{\mathbb{E}}^{\square}_{h_{1},h_{2},h_{3}\in\mathbb{Z}} \Vert f_{2}\Vert_{G'(\c_{2,1}(h_{1},h_{2},h_{3})),\dots, G'(\c_{2,7}(h_{1},h_{2},h_{3}))},
\end{split}
\label{102}
\end{equation}
where 
$\c_{2,1}(h_{1},h_{2},h_{3})=2h_{1}e_{2},$ 
$\c_{2,2}(h_{1},h_{2},h_{3})=-2h_{2}e+2h_{1}e_{2},$ 
$\c_{2,3}(h_{1},h_{2},h_{3})=-2h_{2}e$,
$\c_{2,4}(h_{1},h_{2},h_{3})=-2h_{3}e+2h_{1}e_{2}$,
$\c_{2,5}(h_{1},h_{2},h_{3})=-2h_{3}e$, 
$\c_{2,6}(h_{1}, h_{2},h_{3})=-2(h_{2}+h_{3})e+2h_{1}e_{2}$, 
$\c_{2,7}(h_{1},h_{2},h_{3})=-2(h_{2}+h_{3})e.$ 
This verifies part (ii) of Proposition \ref{pet} for $i=2$.
Using the notation in Proposition \ref{pet}, we have that
\begin{eqnarray*}
	U_{2,1}(1,0,0)& = &\{2e_{2},2e_{2},{\bf 0},2e_{2},{\bf 0},2e_{2},{\bf 0}\}=\{2e_{2},{\bf 0}\}\subseteq\{-2e,2e_{2},{\bf 0}\}\sim R_{2},
	\\ U_{2,1}(0,1,0) & = &\{{\bf 0},-2e,-2e,{\bf 0},{\bf 0},-2e,-2e\}=\{-2e,{\bf 0}\}\subseteq\{-2e,2e_{2},{\bf 0}\}\sim R_{2},
	\\ U_{2,1}(0,0,1) &= &\{{\bf 0},{\bf 0},{\bf 0},-2e,-2e,-2e,-2e\}=\{-2e,{\bf 0}\}\subseteq\{-2e,2e_{2},{\bf 0}\}\sim R_{2}.
\end{eqnarray*}
This verifies part (i) of Proposition \ref{pet} for $i=2$.
\medskip

We now introduce some additional notation that we will use in the general case. Let $d,\ell,L\in\mathbb{N}^{\ast}$,
$s\in\mathbb{N}$ and $q_{1},\dots,q_{\ell}\colon(\mathbb{Z}^{L})^{s+1}\to\mathbb{Z}^{d}$ be polynomials. Denote $\q=(q_{1},\dots,q_{\ell})$, where 
\[q_{i}(n;h_{1},\dots,h_{s})=\sum_{b,a_{1},\dots,a_{s}\in\mathbb{N}^{L}} h^{a_{1}}_{1}\dots h^{a_{s}}_{s}n^{b}\cdot u_{i}(b;a_{1},\dots,a_{s})\]
for some $u_{i}(b;a_{1},\dots,a_{s})\in\mathbb{Q}^{d}$ with all but finitely many being ${\bf 0}$ for each $1\leq i\leq \ell$. For all $b,a_{1},\dots,a_{s}\in\mathbb{N}^{L}$, denote
$$R_{\q}(b;a_{1},\dots,a_{s})\coloneqq \{u_{i}(b;a_{1},\dots,a_{s})\colon 1\leq i\leq \ell\}\cup\{{\bf 0}\}\subseteq\mathbb{Q}^{d}.$$
Roughly speaking, $R_{\q}(b;a_{1},\dots,a_{s})$ records the coefficients of $\q$ at ``level''-$(b;a_{1},\dots,a_{s})$ (together with the zero vector $\textbf{0}$).

The following proposition shows that, during the PET-induction process, after applying the vdC-operation to our expression, we can still keep track of the coefficients of the polynomials.

\begin{proposition}[vdC-operations treat the sets $R_{\q}(b;a_{1},\dots,a_{s})$ nicely]\label{1234} 
	Let $d,\ell,L\in\mathbb{N}^{\ast}$,
	$s\in\mathbb{N},$ $(X,\mathcal{B},\mu,(T_{g})_{g\in\mathbb{Z}^{d}})$ a $\mathbb{Z}^{d}$-system,
	$q_{1},\dots,q_{\ell}\colon(\mathbb{Z}^{L})^{s+1}\to\mathbb{Z}^{d}$ polynomials and  $\q=(q_{1},\dots,q_{\ell})$. If $A=(L,s,\ell,\q)$ with $\partial_{w}A=(L,s+1,\ell^{\ast},\q^{\ast})$ for some  $\ell^{\ast}\in\mathbb{N}^{\ast}, 1\leq w\leq \ell$, where $\q^{\ast}=(q^{\ast}_{1},\dots,q^{\ast}_{\ell^{\ast}})$ for some polynomials $q^{\ast}_{1},\dots,q^{\ast}_{\ell^{\ast}}\colon(\mathbb{Z}^{L})^{s+2}\to\mathbb{Z}^{d},$  then for all $b,a_{1},\dots,a_{s+1}\in\mathbb{N}^{L}$ not all equal to ${\bf 0},$ we have that
	\begin{equation}\label{222}
	R_{\q^{\ast}}(b;a_{1},\dots,a_{s+1})\lesssim R_{\q}(b+a_{s+1};a_{1},\dots,a_{s}).
	\end{equation}
\end{proposition}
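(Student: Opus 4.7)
The plan is to perform a direct computation of coefficients through the three steps of the vdC-operation, splitting into two cases based on whether $a_{s+1}={\bf 0}$.

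First, I would expand each $q_i$ in the multi-index form provided and use the multi-variable binomial identity $(n+h_{s+1})^{b} = \sum_{c\le b}\binom{b}{c} n^c h_{s+1}^{b-c}$ to write down the coefficient of the monomial $h_1^{a_1}\cdots h_s^{a_s} h_{s+1}^{a_{s+1}} n^b$ in each of the $2\ell$ intermediate polynomials $q'_j$ produced in Step~1 of the vdC-operation. A short calculation yields: for $1\le j\le \ell$ (where $q'_j = q_j - q_w$) the coefficient equals $u_j(b;a_1,\dots,a_s)-u_w(b;a_1,\dots,a_s)$ if $a_{s+1}={\bf 0}$ and ${\bf 0}$ otherwise; for $\ell<j\le 2\ell$ (where $q'_j(n;\cdot)=q_{j-\ell}(n+h_{s+1};\cdot)-q_w(n;\cdot)$) the coefficient equals $u_{j-\ell}(b;a_1,\dots,a_s)-u_w(b;a_1,\dots,a_s)$ when $a_{s+1}={\bf 0}$ and $\binom{b+a_{s+1}}{b}\,u_{j-\ell}(b+a_{s+1};a_1,\dots,a_s)$ when $a_{s+1}\ne{\bf 0}$.

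Second, I would argue that Steps~2 and~3 do not alter the relevant coefficients. At any level $(b;a_1,\dots,a_{s+1})$ with $b\ne{\bf 0}$, polynomials within the same essential-equivalence class differ by a polynomial independent of $n$ and hence share the coefficient of every monomial with $n^b$, $b\ne{\bf 0}$; consequently the representative $q''_{i,1}$ passes this coefficient intact to $q^{\ast}_i$. For $b={\bf 0}$, the coefficient of $q^{\ast}_i$ is still the coefficient of some non-essentially-constant $q'_j$ at the same level, namely the chosen representative. In either case,
\[ R_{\q^{\ast}}(b;a_1,\dots,a_{s+1}) \;\subseteq\; \bigl\{\text{coeff.\ of } h_1^{a_1}\cdots h_{s+1}^{a_{s+1}} n^b\text{ in } q'_j : 1\le j\le 2\ell\bigr\}\cup\{{\bf 0}\}. \]

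Third, the claim reduces to verifying the $\lesssim$ relation in the two cases. If $a_{s+1}={\bf 0}$ (so $b+a_{s+1}=b$), the right-hand side above equals $\{u_i(b;a_1,\dots,a_s)-u_w(b;a_1,\dots,a_s):1\le i\le \ell\}\cup\{{\bf 0}\}$, and is contained in $\{u-u_w(b;a_1,\dots,a_s):u\in R_{\q}(b;a_1,\dots,a_s)\}$, which is equivalent to $R_{\q}(b;a_1,\dots,a_s)$ via the $(r=1,\;u_w(b;a_1,\dots,a_s))$ data. If $a_{s+1}\ne{\bf 0}$, the right-hand side is exactly $\binom{b+a_{s+1}}{b}\cdot R_{\q}(b+a_{s+1};a_1,\dots,a_s)$, equivalent to $R_{\q}(b+a_{s+1};a_1,\dots,a_s)$ via the rescaling $(r=\binom{b+a_{s+1}}{b}\ne 0,\;{\bf 0})$. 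Either way, the asserted inclusion $R_{\q^{\ast}}(b;a_1,\dots,a_{s+1})\lesssim R_{\q}(b+a_{s+1};a_1,\dots,a_s)$ follows.

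The main obstacle I expect is the multi-indexed bookkeeping: one must align the convention that $R_{\q}$ always contains ${\bf 0}$, and interpret the inclusion $R_{\q^{\ast}}\subseteq R_3$ through the correct equivalence $R_3\sim R_{\q}(b+a_{s+1};a_1,\dots,a_s)$ rather than through a naive direct equality. Beyond this, no deeper idea is needed---the argument just verifies that the $(r,u_i)$-flexibility built into $\sim$ is rich enough to simultaneously absorb both the translation-by-$u_w$ phenomenon that appears when $a_{s+1}={\bf 0}$ and the binomial-coefficient rescaling that appears when $a_{s+1}\ne{\bf 0}$.
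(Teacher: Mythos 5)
Your proposal is correct and follows essentially the same approach as the paper: expand via the multi-index binomial identity $(n+h_{s+1})^{b'}=\sum_{c\le b'}\binom{b'}{c}n^c h_{s+1}^{b'-c}$ to obtain the coefficient $\binom{b+a_{s+1}}{b}u_i(b+a_{s+1};\cdot)$, observe that Steps~2--3 of the vdC-operation only restrict to a sub-collection of coefficients (which is compatible with $\lesssim$), and split into the cases $a_{s+1}={\bf 0}$ (translation-by-$u_w$ absorbed by the $\sim$-pivot) and $a_{s+1}\ne{\bf 0}$ (scaling by the nonzero binomial coefficient absorbed by $r$). The paper fixes $w=1$ for notational convenience and compresses the Step~2/3 justification into a "$\q\approx\q'$" convention, whereas you carry a general $w$ and spell out why picking representatives can only shrink the coefficient set; these are presentational differences, not a different argument.
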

\begin{proof}
	For convenience we write $\q^{\ast}\approx (p_{1},\dots, p_{\ell'})$ for some polynomials $p_{1},\dots, p_{\ell'}$ if $\q^{\ast}$ can be obtained by removing all the essential constant polynomials from $p_{1},\dots, p_{\ell'}$, ordering the rest into groups such that two polynomials are essentially distinct if and only if they are in different groups, and then picking one polynomial from each group. It is not hard to see that if $\q\approx \q'$, then $R_{\q}(b;a_{1},\dots,a_{s+1})=R_{\q'}(b;a_{1},\dots,a_{s+1})$ for all $b,a_{1},\dots,a_{s+1}\in\mathbb{N}^{L}$ not all equal to $\textbf{0}$.
	
	Denote $q'_{i}\colon(\mathbb{Z}^{L})^{s+2}\to\mathbb{Z}^{d}$, $q'_{i}(n;h_{1},\dots,h_{s+1})=q_{i}(n+h_{s+1};h_{1},\dots,h_{s})$ for all $1\leq i\leq\ell$. 
	It suffices to show the statement for $\q^{\ast}\approx(q'_{1}-q_{1},q_{i}-q_{1},q'_{i}-q_{1}\colon i\neq 1)$
	as the general case follows similarly.
	
	Suppose that 
	\[q_{i}(n;h_{1},\dots,h_{s})=\sum_{b,a_{1},\dots,a_{s}\in\mathbb{N}^{L}} h^{a_{1}}_{1}\dots h^{a_{s}}_{s}n^{b}\cdot u_{i}(b;a_{1},\dots,a_{s})\]
	for all
	$1\leq i\leq\ell$. Then, one can immediately check that
	\[q'_{i}(n;h_{1},\dots,h_{s+1})=\sum_{b,a_{1},\dots,a_{s+1}\in\mathbb{N}^{L}} h^{a_{1}}_{1}\dots h^{a_{s+1}}_{s+1}n^{b}\cdot \binom{b+a_{s+1}}{b}u_{i}(b+a_{s+1};a_{1},\dots,a_{s}).\footnote{ For $a=(a_{1},\dots,a_{L}), b=(b_{1},\dots,b_{L})\in\mathbb{N}^{L}$, $\binom{a}{b}$ denotes the quantity $\prod_{m=1}^{L}\binom{a_{m}}{b_{m}}$.}\]

	If $a_{s+1}=0$, then the coefficient of $h^{a_{1}}_{1}\dots h^{a_{s}}_{s}n^{b}$ for $q'_{1}-q_{1}$ is ${\bf 0},$ and for both $q_{i}-q_{1}$ and $q'_{i}-q_{1}$ are $u_{i}(b;a_{1},\dots,a_{s})-u_{1}(b;a_{1},\dots,a_{s})$.
	This implies that
	$R_{\q^{\ast}}(b;a_{1},\dots,a_{s},0)=R_{\q'}(b;a_{1},\dots,a_{s},0)\lesssim R_{\q}(b;a_{1},\dots,a_{s}),$\footnote{ Note that $R_{\q'}(b;a_{1},\dots,a_{s},0)\sim R_{\q}(b;a_{1},\dots,a_{s})$ if and only if one of $u_{i}(b;a_{1},\dots,a_{s})$ is ${\bf 0}$.}
	which proves (\ref{222}).
	
	\noindent If $a_{s+1}>0$, then the coefficient of $h^{a_{1}}_{1}\dots h^{a_{s+1}}_{s+1}n^{b}$ for $q'_{1}-q_{1},q_{i}-q_{1}$ and $q'_{i}-q_{1}$ are
	$\binom{b+a_{s+1}}{b}u_{1}(b+a_{s+1};a_{1},\dots,a_{s})$, $ {\bf 0}$ and $ \binom{b+a_{s+1}}{b}u_{i}(b+a_{s+1};a_{1},\dots,a_{s})$ respectively. 
	In this case 
	$R_{\q^{\ast}}(b;a_{1},$ $\dots,a_{s+1})$ $=R_{\q'}(b;a_{1},\dots,a_{s+1})\sim R_{\q}(b+a_{s+1};a_{1},\dots,a_{s})$,
	which finishes the proof.
\end{proof}

Let $A$ be a PET-tuple and $f\in L^{\infty}(\mu)$.
If $A$ is semi-standard but not standard for $f$, then the PET-induction does not work well enough to provide an upper bound for $S(A,\kappa)$ in terms of the Host-Kra seminorms of $f$. To overcome this difficulty, we use a ``dimension-increment'' argument to change $A$ into a new PET-tuple which is standard for $f$, but at the cost of increasing the dimension from $L$ to $2L$.\footnote{ In the papers \cite{Jo,L9}, where similar methods were used, the dimension was increased from $L$ to $3L$ instead.} In fact, this is the main reason that justifies the multi-variable nature of the results in this article.  

This ``dimension-increment'' argument is carried out in the following proposition. The idea essentially comes from \cite{Jo,L9} but, again, some additional work needs to be done in order to keep track of the set $R_{\q}(b;a_{1},\dots,a_{s})$. 

\begin{proposition}[Dimension-increasing property]\label{ext}
	Let $L,d,\ell\in\mathbb{N}^{\ast},$ $s\in\mathbb{N},$ $(X,\mathcal{B},\mu,(T_{g})_{g\in\mathbb{Z}^{d}})$ a $\mathbb{Z}^{d}$-system, $f\in L^{\infty}(\mu)$, $q_{1},\dots,q_{\ell}\colon(\mathbb{Z}^{L})^{s+1}\to\mathbb{Z}^{d}$ polynomials,
	$g_{1},\dots,g_{\ell}\colon X\times (\mathbb{Z}^{L})^{s}\to\mathbb{R}$ functions such that each $g_{i}(\cdot;h_{1},\dots,h_{s})$ is an $L^{\infty}(\mu)$ function bounded by $1$ for all $h_{1},\dots,h_{s}\in\mathbb{Z}^{L}, 1\leq i\leq \ell$, and let  $\q=(q_{1},\dots,q_{\ell})$ and $\g=(g_{1},\dots,g_{\ell})$. 
	
	If the PET-tuple $A=(L,s,\ell,\g,\q)$ is non-degenerate and semi-standard but not standard for $f$, then there exist polynomials $q'_{1},\dots,q'_{2\ell-1}\colon(\mathbb{Z}^{2L})^{s+1}\to\mathbb{Z}^{d}$, functions
	$g'_{1},\dots,g'_{2\ell-1}\colon X\times (\mathbb{Z}^{2L})^{s}\to\mathbb{R}$ such that each $g'_{i}(\cdot;h_{1},\dots,h_{s})$ is an $L^{\infty}(\mu)$ function bounded by $1$ for all $h_{1},\dots,h_{s}\in\mathbb{Z}^{2L}, 1\leq i\leq 2\ell-1,$  $\q'=(q'_{1},\dots,q'_{2\ell-1})$ and $\g'=(g'_{1},\dots,g'_{2\ell-1})$, such that
	the PET-tuple $A'=(2L,s,2\ell-1,\g',\q')$ is non-degenerate and standard for $f$ and $S(A,2\kappa)\leq S(A',\kappa)$ for all $\kappa>0$. Moreover, for all $b,b',a_{1},\dots,a_{s},$ $a'_{1},\dots,a'_{s}\in\mathbb{N}^{L}$ not all equal to ${\bf 0}$, there exist $b'',a''_{1},\dots,a''_{s}\in\mathbb{N}^{L}$ not all equal to ${\bf 0},$ such that
	\begin{equation}\label{321}
	R_{\q'}(b,b';a_{1},\dots,a_{s},a'_{1},\dots,a'_{s})\sim R_{\q}(b'';a''_{1},\dots,a''_{s}).
	\end{equation}		
\end{proposition}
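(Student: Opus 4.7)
The plan is to increase the dimension of the first variable from $L$ to $2L$ via a Cauchy–Schwarz-type doubling, then shift by a polynomial of maximum degree so that the polynomial attached to $f$ attains that maximum degree in the new PET-tuple. Since $A$ is semi-standard but not standard for $f$, we may assume $g_1=f$ and pick an index $i_0\ge 2$ with $\deg(q_{i_0})=\deg(A)>\deg(q_1)$.

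Squaring the $L^2$-norm of the inner average and then applying the invariance of $\mu$ under $T_{-q_{i_0}(n';h)}$, I obtain
\[
\left\|\mathbb{E}_{n\in I_N}\prod_{i=1}^{\ell}T_{q_i(n;h)}g_i\right\|_{L^2}^{2}
=\mathbb{E}_{n,n'\in I_N}\int\prod_{i=1}^{\ell}T_{q_i(n)-q_{i_0}(n')}g_i\cdot\prod_{i=1}^{\ell}T_{q_i(n')-q_{i_0}(n')}g_i\,d\mu.
\]
In the second product the $i=i_0$ term collapses to the bare function $g_{i_0}$ (trivial shift), so by Cauchy–Schwarz in $x$ and $\|g_{i_0}\|_{L^2}\le 1$, the right-hand side is bounded by $\|\mathbb{E}_{n,n'}P(x,n,n')\|_{L^2}$, where $P$ is the product of the remaining $2\ell-1$ non-trivially shifted factors. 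Raising to the $\kappa$-th power gives $S(A,2\kappa)\le S(A',\kappa)$, where $A'=(2L,s,2\ell-1,\g',\q')$ is the new PET-tuple with polynomials $q'_i=q_i(n)-q_{i_0}(n')$ for $1\le i\le \ell$ and $q'_{\ell+j}=q_{i_j}(n')-q_{i_0}(n')$ for $i_j\in\{1,\dots,\ell\}\setminus\{i_0\}$, viewed as polynomials on $\mathbb{Z}^{2L}$; the $h_j\in\mathbb{Z}^L$ are extended to $H_j=(h_j,h_j')\in\mathbb{Z}^{2L}$ with the functions independent of the new $h_j'$ coordinates. Non-degeneracy follows because the first-group polynomials depend non-trivially on $n$ while the second-group polynomials depend only on $n'$, and the essential distinctness of the $q_i$'s passes through the subtraction. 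Standardness for $f$ follows because $g'_1=g_1=f$ and $q'_1=q_1(n)-q_{i_0}(n')$ has degree $\deg(q_{i_0})$ in $(n,n')$, which equals $\deg(A')$ since every other $q'_j$ also has degree at most $\deg(q_{i_0})$.

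The main technical work is verifying the coefficient-tracking condition~\eqref{321}. Expanding each $q'_i(N;H)$ with $N=(n,n')$ and $H_j=(h_j,h_j')$, the coefficient $u'_i(B;A)$ vanishes whenever some $a_j'\ne\mathbf 0$, so in that case one picks any $(b'';a'')$ with $R_{\q}(b'';a'')=\{\mathbf 0\}$ (e.g., $b''$ beyond the degree of $\q$) and the equivalence is trivial. For $a_j'=\mathbf 0$ for all $j$, three sub-cases remain depending on $B=(b,b')$: when $B=(b,\mathbf 0)$ with $b\ne\mathbf 0$, one reads off $R_{\q'}(B;A)=R_{\q}(b;a)$ directly and chooses $(b'';a'')=(b;a)$; when $B=(\mathbf 0,b')$ with $b'\ne\mathbf 0$, the coefficients are $-u_{i_0}(b';a)$ and $u_i(b';a)-u_{i_0}(b';a)$, so $R_{\q'}(B;A)\sim R_{\q}(b';a)$ via the shift relation centered at $u_{i_0}(b';a)$ with $r=1$; when $B=(\mathbf 0,\mathbf 0)$ and some $a_j\ne\mathbf 0$, the coefficients all take the form $u_i(\mathbf 0;a)-u_{i_0}(\mathbf 0;a)$, which is again $\sim R_{\q}(\mathbf 0;a)$ with the same shift. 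This final sub-case is the delicate one, since the $\sim$-relation preserves cardinality and one must reconcile the counts on both sides; I expect this cardinality bookkeeping, combined with correctly tracking which differences collapse, to be the main technical obstacle and to require careful use of the essential distinctness of the $q_i$'s.
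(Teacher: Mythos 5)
Your proof takes the same route as the paper's (the paper normalizes the maximal-degree polynomial to be $q_\ell$ rather than carrying a generic index $i_0$): Cauchy--Schwarz doubling of the inner variable, shifting by the maximal-degree polynomial, dropping the collapsed factor, and tracking coefficients of the expanded $\q'$. Regarding the sub-case $b=b'=\mathbf 0$ with some $a_j\neq\mathbf 0$ that you flag as delicate: you are right to be worried, and in fact $\sim$ fails there. Since the $i=i_0$ coefficient collapses to $\mathbf 0$ (duplicating the adjoined $\mathbf 0$), the set $R_{\q'}((\mathbf 0,\mathbf 0);(a,\mathbf 0))=\{u_i(\mathbf 0;a)-u_{i_0}(\mathbf 0;a):1\le i\le\ell\}\cup\{\mathbf 0\}$ generically has one fewer element than $R_\q(\mathbf 0;a)$, so only $\lesssim$ can hold. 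The paper's own proof is no more careful here (it asserts $R_\q(b;a)=R_{\q'}((b,\mathbf 0);(a,\mathbf 0))$ for all $b$, which fails when $b=\mathbf 0$ and $a\neq \mathbf 0$). This gap is harmless: \cref{ext} is only invoked with $s=0$ (in the proof of \cref{pet}), so this sub-case never occurs, and the downstream argument in \cref{pet} in any case uses only the $\lesssim$ relation. Two minor omissions in your write-up: you should also record the sub-case $b\neq\mathbf 0$, $b'\neq\mathbf 0$ (which, like $a'_j\neq\mathbf 0$, gives $R_{\q'}=\{\mathbf 0\}$ and is absorbed by choosing $b''$ beyond $\deg\q$), and the inequality $S(A,2\kappa)\le S(A',\kappa)$ uses that $I_N\times I_N$ is a F\o lner sequence in $\Z^{2L}$ so that the supremum over $\Z^{2L}$-F\o lner sequences dominates.
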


\begin{proof}
	Since $A$ is semi-standard but not standard for $f$,
	we may assume without loss of generality that $g_{1}(x;h_{1},\dots,h_{s})=f(x)$,
	$\deg(q_{1})<\deg(A)$, and $\deg(q_{\ell})=\deg(A)$.
	For convenience denote ${\bf h}=(h_{1},\dots,h_{s})$ and ${\bf h}'=(h'_{1},\dots,h'_{s})$. For $1\leq m\leq \ell$ we set \[ q'_{m}((n,n');({\bf h},{\bf h}'))\coloneqq q_{m}(n;{\bf h})-q_{\ell}(n';{\bf h}),\footnote{ The notion $({\bf h},{\bf h}')$ refers to the vector $((h_{1},h'_{1}),\dots,(h_{s},h'_{s}))\in (\Z^{2L})^{s}$, which we use to simplify the notation.}\quad \text{and}\quad  g'_{m}(x;({\bf h},{\bf h}'))\coloneqq g_{m}(x;{\bf h}),\] while for $1\leq m\leq \ell-1$ we set  \[q'_{m+\ell}((n,n');({\bf h},{\bf h}'))\coloneqq q_{m}(n';{\bf h})-q_{\ell}(n';{\bf h}), \quad \text{and} \quad  g'_{m+\ell}(x;({\bf h},{\bf h}'))\coloneqq g_{m}(x;{\bf h}).\] Also, let $\q'=(q'_{1},\dots,q'_{2\ell-1}),$ $\g'=(g'_{1},\dots,g'_{2\ell-1})$ and $A'=(2L,s,2\ell-1,\q',\g')$.
	
	Since $\deg(q_{\ell})=\deg(A)$ and $\deg(q_{1})<\deg(A)$, we have that $\deg(q'_{1})=\deg(A')$ and moreover $g'_{1}=f$. So $A'$ is standard for $f$.
	On the other hand,
	since $A$ is non-degenerate, one can easily see that $q'_{1},\dots,q'_{2\ell-1}$ are essentially distinct (note that $q_{\ell}(n;{\bf h})-q_{\ell}(n';{\bf h})$ is essentially non-constant).  So  $A'$ is non-degenerate.
	
	Recall that $\overline{\mathbb{E}}^{\square}_{{\bf h}\in(\mathbb{Z}^{L})^{s}}=\overline{\mathbb{E}}^{\square}_{h_{1}\in\mathbb{Z}^{L}}\dots \overline{\mathbb{E}}^{\square}_{h_{s}\in\mathbb{Z}^{L}}.$
	By the fact that the action is measure preserving and the Cauchy-Schwarz inequality, we have that 
	\begin{equation}\nonumber
	\begin{split}
	& S(A,2\kappa)=\overline{\mathbb{E}}^{\square}_{{\bf h}\in(\mathbb{Z}^{L})^{s}}\F\vl\Bigl\Vert\mathbb{E}_{n\in I_{N}}\prod_{m=1}^{\ell}T_{q_{m}(n;{\bf h})}g_{m}(x;{\bf h})\Bigr\Vert^{2\kappa}_{L^{2}(\mu)}
	\\= & \overline{\mathbb{E}}^{\square}_{{\bf h}\in(\mathbb{Z}^{L})^{s}}\F\vl\Bigl\vert\mathbb{E}_{n,n'\in I_{N}}\int_{X}\prod_{m=1}^{\ell} T_{q_{m}(n;{\bf h})}g_{m}(x;{\bf h})\cdot T_{q_{m}(n';{\bf h})}g_{m}(x;{\bf h})\,d\mu\Bigr\vert^{\kappa}
	\\\leq & \overline{\mathbb{E}}^{\square}_{{\bf h}\in(\mathbb{Z}^{L})^{s}}\F\vl\Bigl\Vert\mathbb{E}_{n,n'\in I_{N}}\prod_{m=1}^{\ell}T_{q_{m}(n;{\bf h})-q_{\ell}(n';{\bf h})}g_{m}(x;{\bf h})
	\\ & \hspace{9cm} 
	\cdot\prod_{m=1}^{\ell-1} T_{q_{m}(n';{\bf h})-q_{\ell}(n';{\bf h})}g_{m}(x;{\bf h})\Bigr\Vert_{L^{2}(\mu)}^{\kappa}
		\\ = &\overline{\mathbb{E}}^{\square}_{({\bf h},{\bf h}')\in(\mathbb{Z}^{2L})^{s}}\F\vl\Bigl\Vert\mathbb{E}_{n,n'\in I_{N}}\prod_{m=1}^{2\ell-1}T_{q'_{m}((n,n');({\bf h},{\bf h}'))}g'_{m}(x;({\bf h},{\bf h}'))\Bigr\Vert_{L^{2}(\mu)}^{\kappa} 
		\\ \leq &  S(A',\kappa), 
	\end{split}
	\end{equation}
	where the last inequality holds because $(I_{N}\times I_{N})_{N\in\N}$ is a F\o lner sequence in $\Z^{L}\times \Z^{L}$.
	On the other hand, if 
	\[q_{i}(n;{\bf h})=\sum_{b,a_{1},\dots,a_{s}\in\mathbb{N}^{L}} h^{a_{1}}_{1}\dots h^{a_{s}}_{s}n^{b}\cdot u_{i}(b;a_{1},\dots,a_{s})\]
	for some $u_{i}(b;a_{1},\dots,a_{s})\in\mathbb{Q}^{d}$, then for $1\leq i\leq \ell-1$, we have
	\[q'_{i+\ell}(n,n';{\bf h},{\bf h}')=\sum_{b,a_{1},\dots,a_{s}\in\mathbb{N}^{L}} h^{a_{1}}_{1}\dots h^{a_{s}}_{s}{n'}^{b}\cdot (u_{i}(b;a_{1},\dots,a_{s})-u_{\ell}(b;a_{1},\dots,a_{s})),\]
	and for $1\leq i\leq \ell$,
	\[q'_{i}(n,n';{\bf h},{\bf h}')=\sum_{b,a_{1},\dots,a_{s}\in\mathbb{N}^{L}} h^{a_{1}}_{1}\dots h^{a_{s}}_{s}(n^{b}\cdot u_{i}(b;a_{1},\dots,a_{s})-{n'}^{b}\cdot u_{\ell}(b;a_{1},\dots,a_{s})).\]
	So, for all $b,b',a_{1},\dots,a_{s},a'_{1},\dots,a'_{s}\in\mathbb{N}^{L}$, similarly to the argument in the proof of Proposition~\ref{1234}, we have \[R_{\q}(b;a_{1},\dots,a_{s})=R_{\q'}((b,{\bf 0});(a_{1},{\bf 0}),\dots,(a_{s},{\bf 0}))\sim R_{\q'}(({\bf 0},b);(a_{1},{\bf 0}),\dots,(a_{s},{\bf 0}))\]
	and
	$R_{\q'}((b,b');(a_{1},a'_{1}),\dots,(a_{s},a'_{s}))=\{{\bf 0}\}$. This implies (\ref{321}) and finishes the proof.
\end{proof}

We are now ready to prove Proposition~\ref{pet} and close this section.

\begin{proof}[Proof of Proposition~\ref{pet}]
	Let $A$ denote the PET-tuple $(L,0,k,(p_{1},\dots,p_{k}),(f_{1},\dots,f_{k}))$. Then, for all $\kappa>0$,
	\[S(A,\kappa)=\F\vl\Bigl\Vert\ei\prod_{m=1}^{k}T_{p_{m}(n)}f_{m}\Bigr\Vert^{\kappa}_{L^{2}(\mu)}.\]
	By the assumption, $A$ is non-degenerate.
	We only prove (\ref{30}) for $f_{1}$ as the other cases are identical.
	
	We first assume that $A$ is standard for $f_{1}$. By Theorem \ref{PET}, there exist finitely many
	vdC-operations $\partial_{\rho_{1}},\dots,\partial_{\rho_{t}}$ such that $A'=\partial_{\rho_{t}}\dots\partial_{\rho_{1}}A$ is a  non-degenerate PET-tuple which is standard for $f_{1}$, and $\deg(A')=1$. 
	By Proposition~\ref{induction}, $S(A, 2^{t})\leq C\cdot S(A',1)$ for some $C>0$ depending only on the polynomials $p_{1},\dots,p_{k}$. We may assume that
	\begin{equation}\nonumber
	\begin{split}
	&S(A',1)=\overline{\mathbb{E}}^{\square}_{h_{1},\dots,h_{s}\in\mathbb{Z}^{L}}\F\vl\Bigl\Vert\ei \prod_{m=1}^{\ell}T_{\textbf{d}_{m}(h_{1},\dots,h_{s})\cdot n+r_{m}(h_{1},\dots,h_{s})}g_{m}(x;h_{1},\dots,h_{s})\Bigr\Vert_{L^{2}(\mu)}
	\end{split}
	\end{equation}
	for some $s,\ell\in\mathbb{N}^\ast$, functions $g_{1},\dots,g_{\ell}\colon X\times (\mathbb{Z}^{L})^{s}\to\mathbb{R},$ where $g_{1}(\cdot;h_{1},\dots,h_{s})=f_1,$ such that each $g_{m}(\cdot;h_{1},\dots,h_{s})$ is an $L^{\infty}(\mu)$ function bounded by $1,$ and polynomials $\textbf{d}_{m}\colon(\mathbb{Z}^{L})^{s}\to(\mathbb{Z}^{d})^{L}$ and $r_{m}\colon(\mathbb{Z}^{L})^{s}\to\mathbb{Z}^{d},$ $1\leq m\leq \ell$, where $\textbf{d}_{m}, r_{m}$ take values in vectors with integer coordinates because the vdC-operations send integer-valued polynomials to integer-valued polynomials. Let $\c_{1,1}=-\textbf{d}_{1}$ and $\c_{1,m}=\textbf{d}_{m}-\textbf{d}_{1}$ for $m\neq 1$.
	Since $A'$ is non-degenerate, we have that $\c_{1,1},\dots,\c_{1,s}\not\equiv{\bf 0}$.
		
	If $\ell\geq 2$,
	by Proposition~\ref{P:1},  we have that 
	\begin{eqnarray*}
	S(A',1) & \leq & C'\cdot \overline{\mathbb{E}}^{\square}_{h_{1},\dots,h_{s}\in\mathbb{Z}^{L}}\Vert T_{r_{1}(h_{1},\dots,h_{s})}f_1\Vert_{\{G'(\c_{1,i}(h_{1},\dots,h_{s}))\}_{1\leq i\leq \ell}} \\
	& = & C'\cdot \overline{\mathbb{E}}^{\square}_{h_{1},\dots,h_{s}\in\mathbb{Z}^{L}}\Vert f_1\Vert_{\{G'(\c_{1,i}(h_{1},\dots,h_{s}))\}_{1\leq i\leq \ell}}
	\end{eqnarray*}
	for some $C'>0$ depending only on $\ell$ (which depends only on the polynomials $p_{1},\dots,p_{k}$).
	If $\ell=1$, by Theorem~\ref{erg} and Lemma~\ref{replacement0} (iv), (vi), we have
 		\begin{eqnarray*}
 			S(A',1) 
 			& = &  \overline{\mathbb{E}}^{\square}_{h_{1},\dots,h_{s}\in\mathbb{Z}^{L}} \Vert \mathbb{E}(T_{r_{1}(h_{1},\dots,h_{s})}f_1\vert \mathcal{I}(\c_{1,1}(h_{1},\dots,h_{s})))\Vert_{2} \\
 				& = &  \overline{\mathbb{E}}^{\square}_{h_{1},\dots,h_{s}\in\mathbb{Z}^{L}} \Vert \mathbb{E}(f_1\vert \mathcal{I}(\c_{1,1}(h_{1},\dots,h_{s})))\Vert_{2} \\
 			& = &  \overline{\mathbb{E}}^{\square}_{h_{1},\dots,h_{s}\in\mathbb{Z}^{L}} \Vert{f_1}\Vert_{G'(\c_{1,1}(h_{1},\dots,h_{s}))}.
 		\end{eqnarray*}
	In both cases, we get (\ref{30}) since $S(A,2^{t})\leq C\cdot S(A',1)$.

	Suppose that 
	\[\c_{1,m}(h_{1},\dots,h_{s})=\sum_{a_{1},\dots,a_{s}\in\mathbb{N}^{L}}h^{a_{1}}_{1}\dots h^{a_{s}}_{s}\cdot \u_{1,m}(a_{1},\dots,a_{s}),\;\;\text{ and}\]
	\[\textbf{d}_{m}(h_{1},\dots,h_{s})=\sum_{a_{1},\dots,a_{s}\in\mathbb{N}^L}h^{a_{1}}_{1}\dots h^{a_{s}}_{s}\cdot \v_{m}(a_{1},\dots,a_{s})\]
	for some  $\u_{1,m}(a_{1},\dots,a_{s}),\v_{m}(a_{1},\dots,a_{s})\in(\mathbb{Q}^{d})^{L}$ with all but finitely many terms being {\bf 0} for each $m$. Write $\u_{1,m}(a_{1},\dots,a_{s})=(u_{1,m,1}(a_{1},\dots,a_{s}),\dots,u_{1,m,L}(a_{1},\dots,a_{s})),$ $\v_{m}(a_{1},\dots,a_{s})=(v_{m,1}(a_{1},\dots,a_{s}),$ $\dots,v_{m,L}(a_{1},\dots,a_{s})),$ and, for all $1\leq r\leq \ell,$ set \[U_{1,r}(a_{1},\dots,a_{s})\coloneqq \{u_{1,m,r}(a_{1},\dots,a_{s})\in\mathbb{Q}^{d}\colon 1\leq m\leq \ell\}\cup\{{\bf 0}\};\quad\text{and}\quad\]
	\[V_{r}(a_{1},\dots,a_{s})\coloneqq \{v_{m,r}(a_{1},\dots,a_{s})\in\mathbb{Q}^{d}\colon 1\leq m\leq \ell\}\cup\{{\bf 0}\}.\]
	Since $A'=\partial_{\rho_{t}}\dots\partial_{\rho_{1}}A$, by repeatedly using Proposition~\ref{1234}, for all $a_{1},\dots,a_{s}\in\mathbb{N}^{L}$ not all equal to ${\bf 0}$ and every $1\leq r\leq L$, there exists $v\in\mathbb{N}^{L}, v\neq {\bf 0}$ such that $V_{r}(a_{1},\dots,a_{s})\lesssim R_{v}$. By the relation between $\u_{1,m}$ and $\v_{m}$, we get $U_{1,r}(a_{1},\dots,a_{s})\sim V_{r}(a_{1},\dots,a_{s})$  and so $U_{1,r}(a_{1},\dots,a_{s})\lesssim R_{v}$.
	
	We now assume that $A=(L,0,k,(p_{1},\dots,p_{k}),(f_{1},\dots,f_{k}))$ is not standard for $f_{1}$. Since $A$ is semi-standard for $f_{1}$, by Proposition~\ref{ext},  there exists a PET-tuple $A'=(2L,0,\ell,\q,\g)$ which is non-degenerate and standard for $f_{1}$ such that $S(A,2\kappa)\leq S(A',\kappa)$ for all $\kappa>0$ and (\ref{321}) holds. Working with the PET-tuple $A'$ instead of $A$ as before (and using (\ref{321})), we get the result.
\end{proof}	

\section{Proof of Proposition \ref{pet3}}\label{s:pet3}

This last section is dedicated to the proof of Proposition~\ref{pet3}.
 If $s=0$, then there is nothing to prove. So we assume that $s\in\mathbb{N}^\ast$.
We remark that it is in this proposition  where the concatenation results (Theorem~\ref{ct0} and Corollary~\ref{ct}) are used.

Following the notation of Proposition~\ref{pet3}, for every ${\bf h}=(h_{1},\dots,h_{s})\in(\mathbb{Z}^{L})^{s}$ and $1\leq i\leq k$, we set
\[W_{i,{\bf h}}\coloneqq Z_{G(\c_{i,1}({\bf h})),\dots,G(\c_{i,t_{i}}({\bf h}))}(\X),\]
and for every subset $J\subseteq(\mathbb{Z}^{L})^{s},$
\[W_{i,J}\coloneqq \bigvee_{{\bf h}\in J}W_{i,{\bf h}}.\] The following lemma informs us that we can assume that the functions $f_i$ are measurable with respect to some $W_{i,J_i}.$

\begin{lemma}\label{pet2}
	Let the notation be as in Proposition~\ref{pet3}  with $s\in\mathbb{N}^\ast$. If (\ref{35}) holds
	for every $\mathbb{Z}^{d}$-system $(X,\mathcal{B},\mu,$ $ (T_{g})_{g\in\mathbb{Z}^{d}})$ and every $f_{1},\dots,f_{k}\in L^{\infty}(\mu)$, then for every $J_{1},\dots,J_{k}\subseteq (\mathbb{Z}^{L})^{s}$ of density $1,$ we have that 
	\begin{equation}\label{11}
	\begin{split}
	\mathbb{E}_{n\in\mathbb{Z}^{L}} T_{p_{1}(n)}f_1\cdot\ldots\cdot T_{p_{k}(n)}f_k=0,\;\; \text{ if $\;\;\mathbb{E}(f_i\vert W_{i,J_{i}})=0$ for some $1\leq i\leq k$.}
	\end{split}
	\end{equation}
\end{lemma}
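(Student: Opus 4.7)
The plan is to apply the hypothesis (\ref{35}) directly, selecting the appropriate index from the $\min$ on its right-hand side. Suppose $\mathbb{E}(f_{i_0}\vert W_{i_0,J_{i_0}})=0$ holds for some $i_0\in\{1,\ldots,k\}$. Since the $\min$ in (\ref{35}) is bounded by the $i_0$-th term, my first step would be to bound
\[\F\vl\Bigl\Vert\mathbb{E}_{n\in I_N}\prod_{i=1}^k T_{p_i(n)}f_i\Bigr\Vert_{L^2(\mu)}\leq C\cdot \E^\square_{{\bf h}\in(\Z^L)^s}\|f_{i_0}\|_{(G(\c_{i_0,m}({\bf h})))_{1\leq m\leq t_{i_0}}},\]
and then to show that the right-hand side vanishes.

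The key observation I would make is that for each ${\bf h}\in J_{i_0}$, $W_{i_0,{\bf h}}$ is a sub-$\sigma$-algebra of the join $W_{i_0,J_{i_0}}=\bigvee_{{\bf h}'\in J_{i_0}}W_{i_0,{\bf h}'}$; hence the tower property of conditional expectations forces $\mathbb{E}(f_{i_0}\vert W_{i_0,{\bf h}})=0$. By the defining property of the Host--Kra characteristic factor, namely $W_{i_0,{\bf h}}=Z_{G(\c_{i_0,1}({\bf h})),\ldots,G(\c_{i_0,t_{i_0}}({\bf h}))}(\X)$, this vanishing is equivalent to $\|f_{i_0}\|_{(G(\c_{i_0,m}({\bf h})))_{1\leq m\leq t_{i_0}}}=0$ for every such ${\bf h}$.

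Next, Host--Kra seminorms are dominated by the $L^\infty$-norm, so the integrand of the iterated average is uniformly bounded and vanishes on the density-$1$ set $J_{i_0}$. A Fubini-type estimate then bounds the iterated average by a constant multiple of the iterated average of $\mathbf{1}_{J_{i_0}^c}$, which is $0$. Plugging back into the displayed inequality gives that the supremum over F\o lner sequences of the $\varlimsup$ of the $L^2$-norm of the multiple averages is $0$, so for every F\o lner sequence the $L^2$-limit exists and equals $0$, which is precisely (\ref{11}).

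The main technical point will be making the Fubini-type estimate for iterated $\E^\square$-averages precise, since iterated $\varlimsup$ of box-averages need not coincide with the joint $\varlimsup$ in general. However, this becomes transparent under the intended interpretation of ``density $1$'' in this setting (in particular, for $J_{i_0}^c$ arising as polynomial zero-sets via Lemma~\ref{ag}, where upper Banach density $0$ is guaranteed and the iterated argument follows by routine induction on $s$).
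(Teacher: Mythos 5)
Your proposal is correct and follows essentially the same route as the paper's proof: restrict the $\min$ in (\ref{35}) to the relevant index, note that $W_{i_0,{\bf h}}\subseteq W_{i_0,J_{i_0}}$ forces the seminorm $\|f_{i_0}\|_{(G(\c_{i_0,m}({\bf h})))_{1\leq m\leq t_{i_0}}}$ to vanish for all ${\bf h}\in J_{i_0}$, and then observe the density-$1$ hypothesis kills the averaged seminorm. The paper states this more tersely and does not spell out the iterated-$\E^\square$ subtlety you flag; your observation that the argument hinges on the specific structure of the density-$1$ sets produced by Lemma~\ref{ag} is a fair and correct amplification of what the paper leaves implicit.
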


\begin{proof}
By Lemma \ref{replacement0} (vi), we may assume without loss of generality that $t_{i}\geq 2$ in (\ref{35}).
Suppose that
	 $\mathbb{E}(f_i\vert W_{i,J_{i}})=0$ for some $1\leq i\leq k$. Then $\mathbb{E}(f_i\vert Z_{G(c_{i,1}({\bf h})),\dots,G(c_{i,t_{i}}({\bf h}))})=0$ for all ${\bf h}\in J_{i}$. 
	 Since $G'(c_{i,j}(\bold{h}))$ is a finite index subgroup of $G(c_{i,j}(\bold{h}))$, by Lemma \ref{replacement0} (iv), we have $\mathbb{E}(f_i\vert Z_{G'(c_{i,1}({\bf h})),\dots,G'(c_{i,t_{i}}({\bf h}))})=0$ and thus $\Vert f_{i}\Vert_{G'(c_{i,1}({\bf h})),\dots,G'(c_{i,t_{i}}({\bf h}))}=0$ for all ${\bf h}\in J_{i}$. Since $J_{i}$ is of density $1,$ the conclusion follows from (\ref{35}). 
\end{proof}

Before proving Proposition~\ref{pet3}, we continue with our main example (Example~\ref{Eg:1}).

\medskip

\noindent {\bf Third part of computations for Example~\ref{Eg:1}:} 
We are dealing with the $(T^{n^{2}+n}_{1},T^{n^{2}}_{2})$ case.
Applying (\ref{101}) to Lemma~\ref{pet2}, we have that
\begin{equation}\label{201}
\begin{split}
\mathbb{E}_{n\in\Z} T^{n^{2}+n}_{1}f_1\cdot T^{n^{2}}_{2}f_2=0,\;\; \text{ if $\;\;\mathbb{E}(f_i\vert W_{i,J_{i}})=0$ for $i=1$ or 2,}
\end{split}
\end{equation}
for all $J_{1},J_{2}\in\mathbb{Z}^{3}$ of density $1,$ where
$$W_{i,J}=\bigvee_{(h_{1},h_{2},h_{3})\in J}W_{i,(h_{1},h_{2},h_{3})}=\bigvee_{(h_{1},h_{2},h_{3})\in J}Z_{G(\c_{i,1}(h_{1},h_{2},h_{3})),\dots,G(\c_{i,7}(h_{1},h_{2},h_{3}))},\quad i=1,2,$$ 
where $\c_{i,j}\colon\Z^{3}\to\Z^{2}$ are the ones in the second part of computations for Example~\ref{Eg:1}.

Recall that $e_{1}=(1,0),$ $ e_{2}=(0,1),$ $ e=e_1-e_2$.
In this case, we have that $H_{1,1}=\mathbb{Z}e_{1}$, $H_{1,3}=H_{1,5}=H_{1,7}=\mathbb{Z}e$ and $H_{1,2}=H_{1,4}=H_{1,6}=\mathbb{Z}^{2}$.
Moreover,  $H_{2,1}=\mathbb{Z}e_{2}$, $H_{2,2}=H_{2,4}=H_{2,6}=\mathbb{Z}e$ and $H_{2,3}=H_{2,5}=H_{2,7}=\mathbb{Z}^2$.

From (\ref{101}) and Lemma \ref{pet2}, if $\mathbb{E}(f_1\vert W_{1,\mathbb{Z}^{3}})=0$, then  $\F\vl\Bigl\Vert\mathbb{E}_{n\in I_{N}} T_1^{n^2+n}f_1\cdot T_2^{n^2}f_2\Bigr\Vert_{L^{2}(\mu)}=0.$
In the general case, by decomposing $f_{1}$ and $\mathbb{E}(f_1\vert W_{1,\mathbb{Z}^{3}})+(f_{1}-\mathbb{E}(f_1\vert W_{1,\mathbb{Z}^{3}}))$,
we can deduce that
\begin{equation}\label{21}
\F\vl\Bigl\Vert\mathbb{E}_{n\in I_{N}} \Bigl(T_1^{n^2+n}f_1\cdot T_2^{n^2}f_2-T_1^{n^2+n}\mathbb{E}(f_1\vert W_{1,\mathbb{Z}^{3}})\cdot T_2^{n^2}f_{2}\Bigr)\Bigr\Vert_{L^{2}(\mu)}=0.
\end{equation}
Fix $\varepsilon>0$.
Since $W_{1,\Z^{3}}=\bigvee_{N=1}^{\infty}W_{1,[-N,N]^{3}}$, by approximation, there exists a finite subset $I$ of $\mathbb{Z}^{3}$ such that $\Vert\mathbb{E}(f_1\vert W_{1,\mathbb{Z}^{3}})-\mathbb{E}(f_1\vert W_{1,I})\Vert_{L^{1}(\mu)}<\varepsilon^{2}/2$.
Since $\Vert f_{1}\Vert_{L^{\infty}(\mu)},\Vert f_{2}\Vert_{L^{\infty}(\mu)}\leq 1$, for all $n\in\Z$,
\begin{equation}\nonumber
	\begin{split}
	&\Bigr\Vert\Bigl(T_1^{n^2+n}\mathbb{E}(f_1\vert W_{1,\mathbb{Z}^{3}})\cdot T_2^{n^2}f_{2}- T_1^{n^2+n}\mathbb{E}(f_1\vert W_{1,I})\cdot T_2^{n^2}f_{2}\Bigr)\Bigr\Vert^{2}_{L^{2}(\mu)}
	\\&=\int_{X}\Bigl(T_1^{n^2+n}\mathbb{E}(f_1\vert W_{1,\mathbb{Z}^{3}})\cdot T_2^{n^2}f_{2}- T_1^{n^2+n}\mathbb{E}(f_1\vert W_{1,I})\cdot T_2^{n^2}f_{2}\Bigr)^{2}\,d\mu
	\\&\leq\int_{X}2\Bigl\vert T_1^{n^2+n}\mathbb{E}(f_1\vert W_{1,\mathbb{Z}^{3}})- T_1^{n^2+n}\mathbb{E}(f_1\vert W_{1,I})\Bigr\vert\,d\mu
	=\int_{X}2\Bigl\vert \mathbb{E}(f_1\vert W_{1,\mathbb{Z}^{3}})- \mathbb{E}(f_1\vert W_{1,I})\Bigr\vert\,d\mu
	<\varepsilon^{2}.
	\end{split}
\end{equation}
So,
\begin{equation}\label{31}
\F\vl\Bigl\Vert\mathbb{E}_{n\in I_{N}} \Bigl(T_1^{n^2+n}\mathbb{E}(f_1\vert W_{1,\mathbb{Z}^{3}})\cdot T_2^{n^2}f_{2}- T_1^{n^2+n}\mathbb{E}(f_1\vert W_{1,I})\cdot T_2^{n^2}f_{2}\Bigr)\Bigr\Vert_{L^{2}(\mu)}
<\varepsilon.
\end{equation}
Note that $W_{1,I}$ is contained in the $(7\vert I\vert)$-step factor
$$W'_{1}\coloneqq Z_{(G(\c_{1,1}(h_{1},h_{2},h_{3})),\dots,G(\c_{1,7}(h_{1},h_{2},h_{3})))_{(h_{1},h_{2},h_{3})\in I}}.$$

We say that $(h'_{1},h'_{2},h'_{3})\in\mathbb{Z}^{3}$ is \emph{good} if for any $(h_{1},h_{2},h_{3})\in I$, any 
$$g\in\{-2h_{1}e_{1},2h_{2}e-2h_1e_1, 2h_{2}e,2h_{3}e-2h_1e_1,2h_{3}e,2(h_{2}+h_{3})e-2h_1e_1, 2(h_{2}+h_{3})e\}$$
(\emph{i.e.}, $g$ is the generator of one of $G(\c_{1,1}(h_{1},h_{2},h_{3})),\dots,G(\c_{1,7}(h_{1},h_{2},h_{3}))$)
and any action
\[g'\in\{-2h'_{1}e_{1},2h'_{2}e-2h'_1e_1, 2h'_{2}e,2h'_{3}e-2h'_1e_1,2h'_{3}e,2(h'_{2}+h'_{3})e-2h'_1e_1, 2(h'_{2}+h'_{3})e\},\]
(\emph{i.e.}, $g'$ is the generator of one of $G(\c_{1,1}(h'_{1},h'_{2},h'_{3})),\dots,G(\c_{1,7}(h'_{1},h'_{2},h'_{3}))$)
the set $$H\coloneqq \text{span}_{\mathbb{Q}}\{g,g'\}\cap\mathbb{Z}^{2}$$ satisfies the following: 
$$\left\{ \begin{array}{ll} H=\mathbb{Z}e_{1} & \; ,\text{ if } g=-2h_{1}e_{1}, g'=-2h'_{1}e_{1} 
\\ H=\mathbb{Z}e  & \; ,\text{ if } g\in\{2h_{2}e,2h_{3}e,2(h_{2}+h_{3})e\}, g'\in\{2h'_{2}e,2h'_{3}e,2(h'_{2}+h'_{3})e\}
\\ H=\mathbb{Z}^{2}  & \; ,\text{ otherwise}
\end{array} \right. .$$

Let $J$ be the set of all good tuples. Since $I$ is finite, it is not hard to show that $J$ is of density 1 (see also the claim in the proof of Proposition~\ref{pet3}). Similar to the way we obtained (\ref{21}), using (\ref{101}) and Lemma \ref{pet2}, we can deduce  
\begin{equation}\label{41}
\F\vl\Bigl\Vert\mathbb{E}_{n\in I_{N}} \Bigl(T_1^{n^2+n}\mathbb{E}(f_1\vert W_{1,I})\cdot T_2^{n^2}f_{2}- T_1^{n^2+n}\mathbb{E}(f_1\vert W_{1,J}\cap W_{1,I})\cdot T_2^{n^2}f_{2}\Bigr)\Bigr\Vert_{L^{2}(\mu)}=0.
\end{equation}
So,  (\ref{21}), (\ref{31}) and (\ref{41}) imply that
\begin{equation}\label{42}
	\begin{split}
		\F\vl\Bigl\Vert\mathbb{E}_{n\in I_{N}}\Bigl(T_1^{n^2+n} f_1\cdot T_2^{n^2}f_{2}
		-T_1^{n^2+n}\mathbb{E}(f_1\vert W_{1,J}\cap W_{1,I})\cdot T_2^{n^2}f_{2}\Bigr)\Bigr\Vert_{L^{2}(\mu)}<\varepsilon.
	\end{split}
\end{equation}

By the definition of good tuples and Corollary \ref{ct}, we have that 
\begin{equation*}
\begin{split}
W_{1,J}\cap W_{1,I}  \subseteq \bigvee_{(h'_{1},h'_{2},h'_{3})\in J}W'_{1}\cap W_{1,(h'_{1},h'_{2},h'_{3})}
 =\bigvee_{(h'_{1},h'_{2},h'_{3})\in J}Z_{(\mathbb{Z}e_{1})^{\times \vert I\vert},(\mathbb{Z}e)^{\times 9\vert I\vert},(\mathbb{Z}^{2})^{\times 39\vert I\vert}}\subseteq Z_{e^{\times \infty},e_{1}^{\times \infty}}.
\end{split}
\end{equation*}
So, (\ref{42}) implies that 
\begin{equation}\nonumber
\begin{split}
\F\vl\Bigl\Vert\mathbb{E}_{n\in I_{N}} T_1^{n^2+n}f_{1}\cdot T_2^{n^2}f_{2}\Bigr\Vert_{L^{2}(\mu)}<\varepsilon,\;\; \text{ if $\;\;\mathbb{E}(f_1\vert Z_{e_{1}^{\times\infty},e^{\times\infty}})=0$.}
\end{split}
\end{equation}
Since $\varepsilon>0$ is arbitrary,
\begin{equation}\nonumber
	\begin{split}
		\F\vl\Bigl\Vert\mathbb{E}_{n\in I_{N}} T_1^{n^2+n}f_{1}\cdot T_2^{n^2}f_{2}\Bigr\Vert_{L^{2}(\mu)}=0,\;\; \text{ if $\;\;\mathbb{E}(f_1\vert Z_{e_{1}^{\times\infty},e^{\times\infty}})=0$.}
	\end{split}
\end{equation}
Working analogously for the $T_2^{n^2}f_2$ term, we eventually get that
\begin{equation}\label{400}
\begin{split}
	\F\vl\Bigl\Vert\mathbb{E}_{n\in I_{N}} T_1^{n^2+n}f_{1}\cdot T_2^{n^2}f_{2}\Bigr\Vert_{L^{2}(\mu)}=0\; \text{ if $\;\mathbb{E}(f_i\vert Z_{e_{i}^{\times\infty},e^{\times\infty}})=0\;$ for $i=1$ or $2$.}
\end{split}
\end{equation}
We remark that (\ref{400}) is a stronger version of (\ref{40}) (\emph{i.e.},  in the continuation of Example~\ref{Eg:1}).

\begin{remark*}
	As it was mentioned before,  the characteristic factors described in Theorem~\ref{T:2} are not the optimal ones in general, but they are sufficient for the needs of our study.
\end{remark*}

We briefly explain the idea on proving \cref{pet3}. Under the assumptions of Proposition~\ref{pet3},
Lemma~\ref{pet2} says that one can assume that $f_{1}$ is measurable with respect to the factor $W_{1,J_{1}}$. However,
thanks to the freedom of the choices of $J_{1}$, we can use Lemma~\ref{pet2} to repeatedly choose different subsets $J_{1,1}, \dots$, $J_{1,r}$, for some $r\in \mathbb{N}^\ast$, and assume that $f_{1}$ is measurable with respect to the factor $W_{1,J_{1,1}}\cap W_{1,J_{1,2}}\cap\dots\cap W_{1,J_{1,r}}$. We then employ the concatenation theorems to estimate the intersection of $W_{1,J_{1,j}}$'s, and find a smaller factor characterizing the multiple average we aim to study.

\begin{proof}[Proof of Proposition \ref{pet3}]
    By Lemma \ref{replacement0} (vi), duplicating $G'(\bold{c}_{i,m}(h_{1},\dots,h_{s}))$ if necessary, we may assume without loss of generality that $t_{i}\geq 2$.
	If $s=0$, then there is nothing to prove. So we assume that $s\in\mathbb{N}^\ast$.
	Let $(X,\mathcal{B},\mu, (T_{g})_{g\in\mathbb{Z}^{d}})$ be a $\mathbb{Z}^{d}$-system, $f_{1},\dots,f_{k}\in L^{\infty}(\mu)$ and $s,t_{1},\dots,$ $t_{k},\c_{i,m},$ $1\leq i\leq k,$ $1\leq m\leq t_i$  be as in the statement. By Lemma~\ref{ag2}, 
	\[H_{i,m}=\text{span}_{\mathbb{Q}}\{G(\c_{i,m}(h_{1},\dots,h_{s}))\colon h_{1},\dots,h_{s}\in\mathbb{Z}^{L}\}\cap\mathbb{Z}^{d}.\]
	To show (\ref{36}), it suffices to show that if $\mathbb{E}(f_{i}\vert Z_{(H_{i,1})^{\times \infty},\dots, (H_{i,t_{i}})^{\times \infty}})=0$ for some $1\leq i\leq k$, then the left hand side of (\ref{36}) equals to 0. We assume without loss of generality that $i=1$.

	For every $r\in\mathbb{N}$, every finite subset $I\subseteq \Z^{L}$, and every tuple $(J_{1},\dots,J_{r})$, where $ J_i\subseteq(\mathbb{Z}^{L})^{s}$, $1\leq i\leq r$, denote 
	\[A_{I}(J_{1},\dots,J_{r})\coloneqq \mathbb{E}_{n\in I} T_{p_{1}(n)}\mathbb{E}(f_1\vert W_{1,J_{1}}\cap\dots\cap  W_{1,J_{r}})\cdot T_{p_{2}(n)}f_{2}\cdot \ldots\cdot T_{p_{k}(n)}f_{k},\]
	and in the degenerated case, set 
	\[A_{I}(\emptyset)\coloneqq \mathbb{E}_{n\in I} T_{p_{1}(n)}f_1\cdot T_{p_{2}(n)}f_{2}\cdot\ldots\cdot T_{p_{k}(n)}f_{k}.\]
	We say that a tuple $(J_{1},\dots,J_{r})$ of subsets of $(\mathbb{Z}^{L})^{s}$ is \emph{admissible} if
	for every ${\bf h}_{u}\in J_{u}, 1\leq u\leq r$ and every $1\leq m\leq t_{1}$, 	denoting
	\begin{equation}\label{0010}
	G_{K}\coloneqq \text{span}_{\mathbb{Q}}\{G(\c_{1,m}({\bf h}_{u}))\colon u\in K\}\cap\mathbb{Z}^{d}
	\end{equation}	
	for all $K\subseteq \{1,\dots,r\}$, the following holds: 
	for all $\emptyset\neq K'\subsetneq K\subseteq \{1,\dots,r\}$ such that $\max\{x\in K'\}<\min\{x\in K\backslash K'\}$, either $G_{K'}\subsetneq G_{K}$ or $G_{K'}=H_{1,m}$.\footnote{ We think of this as a notion of having ``full  rank''.}

	Fix $\varepsilon>0$.
	By (\ref{35}), we have that $$\F\vl\Bigl\Vert A_{I_{N}}(\emptyset)-A_{I_{N}}((\mathbb{Z}^{L})^{s})\Bigr\Vert_{L^{2}(\mu)}=0.$$ By an approximation argument similar to the one that we used to obtain (\ref{31}), there exists a finite subset $J_{1}'\subseteq (\mathbb{Z}^{L})^{s}$ such that $$\F\vl\Bigl\Vert A_{I_{N}}((\mathbb{Z}^{L})^{s})-A_{I_{N}}(J'_{1})\Bigr\Vert_{L^{2}(\mu)}<\varepsilon,$$ and so $$\F\vl\Bigl\Vert A_{I_{N}}(\emptyset)-A_{I_{N}}(J'_{1})\Bigr\Vert_{L^{2}(\mu)}<\varepsilon.$$
	Note that the induction basis is ensured as $J_1'$ is automatically admissible. Suppose now that for some $r\geq 1$, we have constructed finite subsets 
	$J'_{1},\dots,J'_{r}\subseteq(\mathbb{Z}^{L})^{s}$ such that:
	\begin{itemize}
		\item[(i)] $\F\vl\Bigl\Vert A_{I_{N}}(\emptyset)-A_{I_{N}}(J'_{1},\dots,J'_{r})\Bigr\Vert_{L^{2}(\mu)}<r\varepsilon$; and
		\item[(ii)] $(J'_{1},\dots,J'_{r})$ is admissible.
	\end{itemize}	
	We construct $J'_{r+1}$. We first claim that there exists $J_{r+1}\subseteq (\mathbb{Z}^{L})^{s}$ of density $1$ such that $(J'_{1},\dots,J'_{r},J_{r+1})$ is admissible.  For every ${\bf h}_{u}\in J'_{u}, 1\leq u\leq r$, $1\leq m\leq t_{1}$ and nonempty subset $K\subseteq \{1,\dots,r\}$, let 
	\[Q_{m;{\bf h}_{1},\dots,{\bf h}_{r};K} \coloneqq \text{span}_{\mathbb{Q}}\{G(\c_{1,m}({\bf h}_{u}))\colon u\in K\}\cap\mathbb{Z}^{d}.\]
	If $Q_{m;{\bf h}_{1},\dots,{\bf h}_{r};K}=H_{1,m}$, we let  $V_{m;{\bf h}_{1},\dots,{\bf h}_{r};K}=(\mathbb{Z}^{L})^{s}$; otherwise
	$V_{m;{\bf h}_{1},\dots,{\bf h}_{r};K}$
	denotes the set of ${\bf h}=(h_{1},\dots,h_{s})\in(\mathbb{Z}^{L})^{s}$ such that $G(\c_{1,m}({\bf h}))$ is not contained in $Q_{m;{\bf h}_{1},\dots,{\bf h}_{r};K}$.
	Let \[J_{r+1}\coloneqq\bigcap_{ \substack{ {\bf h}_{u}\in I_{u},\; 1\leq u\leq r,\;  1\leq m\leq t_{1},\;  K\subseteq \{1,\dots,r\}} }V_{m;{\bf h}_{1},\dots,{\bf h}_{r};K}.\]
	To show that $(J'_{1},\dots,J'_{r},J_{r+1})$ is admissible, fix ${\bf h}_{i}\in J'_{i}, 1\leq i\leq r$, ${\bf h}_{r+1}\in J_{r+1},$ $1\leq m\leq t_{1}$, and let $G_{K}$ be defined as in (\ref{0010}) for all $K\subseteq\{1,\dots,r+1\}$. Let  
	$\emptyset\neq K'\subsetneq K\subseteq \{1,\dots,r+1\}$ such that $\max\{x\in K'\}<\min\{x\in K\backslash K'\}$. We have the following three possible cases for $r+1$: 
	
	\textbf{Case (i): $r+1\notin K$.} Then  $r+1\notin K'$ and so $\emptyset\neq K'\subsetneq K\subseteq \{1,\dots,r\}$. Since $(I_{1},\dots,I_{r})$ is admissible,  either $G_{K'}\subsetneq G_{K}$ or $G_{K'}=H_{1,m}$. 
	
	\textbf{Case (ii): $r+1\in K'$}. This contradicts the assumption that $\max\{x\in K'\}<\min\{x\in K\backslash K' \}.$ So this case is not possible. 
	
	\textbf{Case (iii):} $r+1\in K$ but $r+1\notin K'$. Then $K'\subseteq \{1,\dots,r\}$ and so $J_{r+1}\subseteq V_{m;{\bf h}_{1},\dots,{\bf h}_{r};K'}$. If $G_{K'}\neq H_{1,m}$, then since ${\bf h}_{r+1}\in J_{r+1}\subseteq V_{m;{\bf h}_{1},\dots,{\bf h}_{r};K'}$, the subgroup $G(\c_{1,m}({\bf h}_{r+1}))$ (which is contained in $G_{K}$ since $r+1\in K$) is not contained in  $Q_{m;{\bf h}_{1},\dots,{\bf h}_{r};K'}=G_{K'}$. This implies that $G_{K}\neq G_{K'}$.

	In conclusion, we have that $(J'_{1},\dots,J'_{r},J_{r+1})$ is admissible.
	The second part of the claim is that $J_{r+1}$ is of density $1.$ Since $J'_{1},\dots,J'_{r}$ are finite sets, it suffices to show that every $V_{m;{\bf h}_{1},\dots,{\bf h}_{r};K}$ is of density $1.$
	If $Q_{m;{\bf h}_{1},\dots,{\bf h}_{r};K}=H_{1,m}$, then $V_{m;{\bf h}_{1},\dots,{\bf h}_{r};K}=(\mathbb{Z}^{L})^{s}$ and we are done.
	Now assume that $Q_{m;{\bf h}_{1},\dots,{\bf h}_{r};K}\neq H_{1,m}$.
	By Lemma~\ref{ag}, the set \[V_{m;{\bf h}_{1},\dots,{\bf h}_{r};K}=\{{\bf h}\in(\mathbb{Z}^{L})^{s}\colon G(\c_{1,m}({\bf h}))\nsubseteq Q_{m;{\bf h}_{1},\dots,{\bf h}_{r};K}\}\] is either of density $1,$ or is empty and \[Q_{m;{\bf h}_{1},\dots,{\bf h}_{r};K}=\text{span}_{\mathbb{Q}}\{G(\c_{i,m}(h_{1},\dots,h_{s}))\colon h_{1},\dots,h_{s}\in\mathbb{Z}^{L}\}\cap\mathbb{Z}^{d}=H_{1,m}.\] 
	By our assumption, $V_{m;{\bf h}_{1},\dots,{\bf h}_{r};K}$ is of density $1.$ This finishes the proof of the claim.
	
	\
	
	By Lemma \ref{pet2},
	$A(J'_{1},\dots,J'_{r})=A(J'_{1},\dots,J'_{r},J_{r+1})$. By an approximation argument, there exists a finite subset $J'_{r+1}\subseteq J_{r+1}$ such that 
	$$\F\vl\Bigl\Vert A_{I_{N}}(J'_{1},\dots,J'_{r},J_{r+1})-A_{I_{N}}(J'_{1},\dots,J'_{r},J'_{r+1})\Bigr\Vert_{L^{2}(\mu)}<\varepsilon.$$ Using the induction hypothesis, we get   $$\F\vl\Bigl\Vert A_{I_{N}}(\emptyset)-A_{I_{N}}(J'_{1},\dots,J'_{r},J'_{r+1})\Bigr\Vert_{L^{2}(\mu)}<(r+1)\varepsilon.$$  So (i) holds for $r+1$. Since $(J'_{1},\dots,J'_{r},J_{r+1})$ is admissible, so is $(J'_{1},\dots,J'_{r},J'_{r+1})$, hence (ii) holds for $r+1$. In conclusion, there exist a tuple  
	$(J'_{1},\dots,J'_{dt_{1}})$  of finite subsets of $(\mathbb{Z}^{L})^{s}$ such that 
	\[\F\vl\Bigl\Vert A_{I_{N}}(\emptyset)-A_{I_{N}}(J'_{1},\dots,J'_{dt_{1}})\Bigr\Vert_{L^{2}(\mu)}<dt_{1}\varepsilon\]
	 and $(J'_{1},\dots,J'_{dt_{1}})$ is admissible. Note that
	\begin{equation}\nonumber
	\begin{split}
	W_{1,J'_{1}}\cap\dots\cap W_{1,J'_{dt_{1}}} & =\bigcap_{u=1}^{dt_{1}}  \bigvee_{ \substack{{\bf h}_{u}\in J'_{u} }}W_{1,{\bf h}_{u}}
	\\&=\bigcap_{u=1}^{dt_{1}} \bigvee_{\substack{{\bf h}_{u}\in J'_{u}}} Z_{G(\c_{1,1}({\bf h}_{u})),\dots,G(\c_{1,t_{1}}({\bf h}_{u}))}
	\\&\subseteq  \bigcap_{u=1}^{dt_{1}}Z_{\{G(\c_{1,m}({\bf h}_{u}))\}_{1\leq m\leq t_{1},{\bf h}_{u}\in J'_{u}}},
	\end{split}
	\end{equation}
	where we  used \cref{replacement0} (vii)  in the last inclusion.	For each $1\leq u\leq dt_{1}$, pick some $1\leq m_{u}\leq t_{1}$ and ${\bf h}_{u}\in J'_{u}$. Consider the set
	\[P\coloneqq \text{span}_{\mathbb{Q}}\{G(\c_{1,m_{u}}({\bf h}_{u}))\colon 1\leq u\leq dt_{1}\}\cap \mathbb{Z}^{d}.\]
	By the pigeon-hole principle, there exist $1\leq m\leq t_{1}$ and $1\leq u_{1}<\dots<u_{d}\leq dt_{1}$ such that $m_{u_{1}}=\dots=m_{u_{d}}=m$. 
	For all $1\leq i\leq d$, let $K_{i}=\{u_{1},\dots,u_{i}\}\subseteq\{1,\dots,dt_{1}\}$ and
	\[P_{i}\coloneqq \text{span}_{\mathbb{Q}}\{G(\c_{1,m_{u}}({\bf h}_{u}))\colon u\in K_{i}\}\cap \mathbb{Z}^{d}.\]
	Since $(J'_{1},\dots,J'_{dt_{1}})$ is admissible, for all $1\leq i\leq d-1$, either $P_{i}=H_{1,m}$ or the dimension of $P_{i+1}$ is higher than that of $P_{i}$. Since the dimension of $P_{i}$ can not exceed $d$, we must have that $P_{i}$ contains $H_{1,m}$ for some $1\leq i\leq d$. As $P_{i}\subseteq P$, we have that
	$P$ also contains $H_{1,m}$.
	By Corollary~\ref{ct},
	\begin{equation}\nonumber
	\begin{split}
	W_{1,J'_{1}}\cap\dots\cap W_{1,J'_{dt_{1}}}
	\subseteq  \bigcap_{u=1}^{dt_{1}}Z_{\{G(\c_{1,m}(\textbf{h}_{u}))\}_{1\leq m\leq t_{1},\textbf{h}_{u}\in J'_{u}}}
	\subseteq Z_{H_{1,1}^{\times\infty},\dots,H_{1,t_{1}}^{\times\infty}}.
	\end{split}
	\end{equation}
	Since $\mathbb{E}(f_{1}\vert Z_{(H_{1,1})^{\times \infty},\dots, (H_{1,t_{1}})^{\times \infty}})=0$, $A(J'_{1},\dots,J'_{dt_{1}})=0$ and so $$\F\vl\Bigl\Vert A_{I_{N}}(\emptyset)\Bigr\Vert_{L^{2}(\mu)}<dt_{1}\varepsilon.$$ Since $\varepsilon$ is chosen arbitrary, the left hand side of (\ref{36}) is equal to $\F\vl\Bigl\Vert A_{I_{N}}(\emptyset)\Bigr\Vert_{L^{2}(\mu)}=0,$ which finishes the proof.
\end{proof}

\end{document}